\newtheorem{assumption}{Assumption}[section]
\newtheorem{theorem}{Theorem}[section]
\newtheorem{lemma}{Lemma}[section]
\newtheorem{corollary}{Corollary}[section]
\newcounter{appendixcounter}
\newtheorem{lemmaappendix}{Lemma}[appendixcounter]
\newtheorem*{remark}{Remark}
\begin{document}

\title{Asymptotic Properties of Recursive Maximum Likelihood Estimation in Non-Linear State-Space Models\thanks
{A short version of the paper has been presented at the 2019 IEEE International Symposium on
Information Theory.} }

\author{Vladislav Z. B. Tadi\'{c} and
Arnaud Doucet
\thanks{
Vladislav Z. B. Tadi\'{c} is with School of Mathematics, University of Bristol,
Bristol BS8 1TW, United Kingdom
(e-mail: v.b.tadic@bristol.ac.uk).

Arnaud Doucet is with Department of Statistics,
University of Oxford, Oxford OX1 3LB, United Kingdom
(e-mail: doucet@stats.ox.ac.uk).

}}

\date{}

\maketitle

\begin{abstract}
Using stochastic gradient search and the optimal filter derivative,
it is possible to perform recursive maximum likelihood estimation
in a non-linear state-space model.
As the optimal filter and its derivative are analytically intractable for such a model,
they need to be approximated numerically.
In \cite{poyiadjis&doucet&singh},
a recursive maximum likelihood algorithm based on a particle approximation
to the optimal filter derivative has been proposed and studied through numerical simulations.
This algorithm and its asymptotic behavior are here analyzed theoretically.
Under regularity conditions,
we show that the algorithm accurately estimates maxima of the underlying log-likelihood rate 
when the number of particles is sufficiently large.
We also provide qualitative upper bounds on the estimation error in terms of the number of particles.
\end{abstract}

\begin{IEEEkeywords}
Non-Linear State-Space Models, Recursive Maximum Likelihood Estimation,
Sequential Monte Carlo Methods,
System Identification.
\end{IEEEkeywords}

\section{Introduction}

State-space models (also known as continuous-state hidden Markov models)
are a class of stochastic processes capable of modeling
complex time-series data and stochastic dynamical systems.
These models can be viewed as a discrete-time Markov process which can be observed only
through noisy measurements of its states.

In many applications, a state-space model depends on a parameter whose value needs to be
estimated given a set of state-observations.
Due to its practical and theoretical importance,
parameter estimation in state-space and hidden Markov models
has been extensively studied in the engineering and statistics literature
(see e.g. \cite{cappe&moulines&ryden}, \cite{douc&moulines&stoffer} and references cited therein).
Among them, the methods based on maximum likelihood principle
have gained much attention.
Their asymptotic properties (convergence, convergence rate and asymptotic normality)
have been analyzed thoroughly in a number of papers 
(see e.g. \cite{bickel&ritov&ryden},
\cite{douc&matias}, \cite{douc&moulines&ryden}, \cite{leroux},
\cite{mevel&finesso}, \cite{ryden1} -- \cite{tadic2}).
Unfortunately, to the best of our knowledge,
the existing results do not offer much information about recursive (i.e., online)
maximum likelihood estimation in non-linear state-space models.
However, in a number of different scenarios,
the parameter indexing a state-space model needs to be estimated recursively.
For example, this is much more computationally efficient for long observation sequences.
In the maximum likelihood approach,
this can be achieved using stochastic gradient search and the optimal filter derivative.
Since the optimal filter and its derivative are not analytically tractable for a non-linear state-space model,
they need to be approximated numerically.
In \cite{poyiadjis&doucet&singh},
a recursive maximum likelihood algorithm based on a particle approximation to the optimal filter derivative
has been proposed and it has been shown experimentally that the algorithm is stable and efficient.
We show here that the algorithm proposed in \cite{poyiadjis&doucet&singh}
produces asymptotically accurate estimates of maxima to the underlying log-likelihood rate.
More specifically, we show that these estimates converge almost surely
to a close vicinity of stationary points of the underlying log-likelihood.
We also provide qualitative upper bounds on the radius of this vicinity.
These bounds are expressed in terms of the number of particles used to approximate
the filter and its derivative and directly characterize
the (asymptotic) error of the recursive particle maximum likelihood algorithm
proposed in \cite{poyiadjis&doucet&singh}.
The obtained results hold under strong mixing assumptions
which are very commonly used in the particle filtering literature
(see e.g. \cite{cappe&moulines&ryden}, \cite{crisan&rozovskii}, \cite{delmoral&doucet&singh},
\cite{douc&moulines&stoffer}).
To the best of our knowledge, the results presented here are the first
to offer a rigorous analysis of recursive maximum likelihood estimation in non-linear state-space models.

The rest of this paper is organized as follows.
In Section \ref{section1}, non-linear state-space models and
the corresponding recursive maximum likelihood algorithm are specified.
In the same section, the main results of the paper are presented.
In Section \ref{section2}, a non-trivial example illustrating the main results is provided.
The main results are proved in Sections \ref{section1*} -- \ref{section3*}.

\section{Main Results} \label{section1}

\subsection{State-Space Models and Parameter Estimation}\label{ssection1.1}

To define state-space models,
we use the following notation.
For a set ${\cal Z}$ in a metric space,
${\cal B}({\cal Z})$ denotes the collection of Borel subsets of ${\cal Z}$.
$d_{x}\geq 1$ and $d_{y}\geq 1$ are integers,
while ${\cal X}\in{\cal B}(\mathbb{R}^{d_{x} })$ and ${\cal Y}\in{\cal B}(\mathbb{R}^{d_{y} })$.
$P(x,dx')$ is a transition kernel on ${\cal X}$,
while $Q(x,dy)$ is a conditional probability measure on ${\cal Y}$ given $x\in{\cal X}$.
$(\Omega,{\cal F}, P)$ is a probability space.
A state-space model can be defined as
the ${\cal X}\times{\cal Y}$-valued stochastic process
$\{ (X_{n}, Y_{n} ) \}_{n\geq 0}$
on $(\Omega,{\cal F}, P)$ which satisfies
\begin{align*}
	&
	P\left( (X_{n+1}, Y_{n+1} )\in B
	|X_{0:n}, Y_{0:n} \right)
	\\
	&
	=
	\iint I_{B}(x,y) Q(x,dy) P(X_{n}, dx )
\end{align*}
almost surely for each $B\in{\cal B}({\cal X}\times{\cal Y})$, $n\geq 0$.
$\{X_{n} \}_{n\geq 0}$ are the unobservable model states,
while $\{Y_{n} \}_{n\geq 0}$ are the observations.
States $\{X_{n} \}_{n\geq 0}$ form a Markov chain,
while $P(x,dx')$ is their transition kernel.
The observations $\{Y_{n} \}_{n\geq 0}$ are mutually independent
conditionally on $\{X_{n} \}_{n\geq 0}$,
while $Q(X_{n}, dy )$ is the conditional distribution of $Y_{n}$ given $X_{0:n}$.

In this paper, we assume that the model $\{ (X_{n}, Y_{n} ) \}_{n\geq 0}$,
can be accurately approximated by a parametric family of state-space models.
To define such a family,
we rely on the following notation.
$d\geq 1$ is an integer, while $\Theta\subset\mathbb{R}^{d}$ is an open set.
${\cal P}({\cal X})$ is the set of probability measures on ${\cal X}$.
$\mu(dx)$ and $\nu(dy)$ are measures on ${\cal X}$ and ${\cal Y}$ (respectively).
$p_{\theta}(x'|x)$ and $q_{\theta}(y|x)$
are functions which map
$\theta\in\Theta$, $x,x'\in{\cal X}$, $y\in{\cal Y}$
to $[0,\infty )$ and satisfy
\begin{align*}
	\int_{\cal X} p_{\theta}(x'|x) \mu(dx')
	=
	\int_{\cal Y} q_{\theta}(y|x) \nu(dy)
	=
	1
\end{align*}
for all $\theta\in\Theta$, $x\in{\cal X}$.
A parametric family of state-space models can then be defined as a collection of
${\cal X}\times{\cal Y}$-valued stochastic processes
$\left\{ (X_{n}^{\theta,\lambda}, Y_{n}^{\theta,\lambda} ) \right\}_{n\geq 0}$
on $(\Omega, {\cal F}, P)$
which are parameterized by $\theta\in\Theta$, $\lambda\in{\cal P}({\cal X})$ and satisfy
\begin{align*}
	&
	P\left( (X_{0}^{\theta,\lambda}, Y_{0}^{\theta,\lambda} ) \in B \right)
	=
	\iint I_{B}(x,y) q_{\theta}(y|x) \lambda(dx),
	\\
	&
	P\left(\left. (X_{n+1}^{\theta,\lambda}, Y_{n+1}^{\theta,\lambda} ) \in B\right|
	X_{0:n}^{\theta,\lambda}, Y_{0:n}^{\theta,\lambda} \right)
	\\
	&
	=
	\iint I_{B}(x,y)
	q_{\theta}(y|x) p_{\theta}(x|X_{n}^{\theta,\lambda} ) \mu(dx) \nu(dy)
\end{align*}
almost surely for each $B\in{\cal B}({\cal X}\times{\cal Y})$, $n\geq 0$.

We are interested in the identification of model parameters.
This problem can be formulated as the estimation of the transition kernel $P(x,dx')$
and the conditional probability $Q(x,dy)$
given a realization of state-observations $\{Y_{n} \}_{n\geq 0}$.
If the identification is based on the recursive maximum likelihood approach
and the parametric model
$\left\{ ( X_{n}^{\theta,\lambda}, Y_{n}^{\theta,\lambda} ) \right\}_{n\geq 0}$,
the estimation of $P(x,dx')$ and $Q(x,dy)$ reduces to the maximization of
the log-likelihood rate associated with models
$\{(X_{n}, Y_{n} ) \}_{n\geq 0}$ and
$\left\{ ( X_{n}^{\theta,\lambda}, Y_{n}^{\theta,\lambda} ) \right\}_{n\geq 0}$.
Here, $\{(X_{n}, Y_{n} ) \}_{n\geq 0}$ is considered as the true system,
while the parametric model
$\left\{ ( X_{n}^{\theta,\lambda}, Y_{n}^{\theta,\lambda} ) \right\}_{n\geq 0}$
is regarded as the candidate model.

To define the log-likelihood rate associated with models
$\{(X_{n}, Y_{n} ) \}_{n\geq 0}$ and
$\left\{ ( X_{n}^{\theta,\lambda}, Y_{n}^{\theta,\lambda} ) \right\}_{n\geq 0}$,
we use the following notation. $q_{\theta}^{n}(y_{1:n}|\lambda )$ is the density of $Y_{1:n}^{\theta,\lambda}$,
i.e.,
\begin{align*}
	q_{\theta}^{n}(y_{1:n}|\lambda )
	=
	\int\cdots\iint
	&
	\left(
	\prod_{k=1}^{n} \big( q_{\theta}(y_{k}|x_{k}) p_{\theta}(x_{k}|x_{k-1}) \big)
	\right)
	\\
	&\cdot 
	\mu(dx_{n})\cdots\mu(dx_{1})\lambda(dx_{0})
\end{align*}
for $\theta\in\Theta$, $\lambda\in{\cal P}({\cal X})$,
$y_{1:n}=(y_{1},\dots,y_{n} )\in{\cal Y}^{n}$, $n\geq 1$.
$l_{n}(\theta,\lambda)$ is the expected (average) log-likelihood of
$Y_{1:n}$ given model $\left\{ ( X_{n}^{\theta,\lambda}, Y_{n}^{\theta,\lambda} ) \right\}_{n\geq 0}$,
i.e.,
\begin{align*}
	l_{n}(\theta,\lambda)
	=
	E\left( \frac{1}{n} \log q_{\theta}^{n}(Y_{1:n}|\lambda ) \right).
\end{align*}
Then, the log-likelihood rate for models
$\{(X_{n}, Y_{n} ) \}_{n\geq 0}$ and
$\left\{ ( X_{n}^{\theta,\lambda}, Y_{n}^{\theta,\lambda} ) \right\}_{n\geq 0}$
can be defined as the limit
$\lim_{n\rightarrow\infty} l_{n}(\theta,\lambda)$.
Under the assumptions adopted in this paper,
$\lim_{n\rightarrow\infty} l_{n}(\theta,\lambda)$ exists and does not depend on $\lambda$
(see Lemmas \ref{lemma1.3} and \ref{lemma3.3}).
Throughout this paper, $l(\theta)$ denotes the log-likelihood rate for models
$\{(X_{n}, Y_{n} ) \}_{n\geq 0}$ and
$\left\{ ( X_{n}^{\theta,\lambda}, Y_{n}^{\theta,\lambda} ) \right\}_{n\geq 0}$,
i.e.,
\begin{align*}
	l(\theta)
	=
	\lim_{n\rightarrow\infty} l_{n}(\theta,\lambda).
\end{align*}

\subsection{Recursive Maximum Likelihood Algorithm}

Recursive maximum likelihood estimation in state-space models
can be described as an online process maximizing the log-likelihood rate
$l(\theta)$.
As $l(\theta)$ and its gradient do not admit closed-form expressions for any non-linear state-space model,
they need to be approximated numerically.
We analyze here the recursive maximum likelihood algorithm proposed in
\cite{poyiadjis&doucet&singh}.
In this algorithm, $\nabla l(\theta)$ is approximated by a particle method,
while $l(\theta)$ is maximized by stochastic gradient search.

The recursive particle maximum likelihood algorithm proposed in
\cite[Sections 3.2, 2.2, Equations (26), (27), (18) -- (22)]{poyiadjis&doucet&singh} is defined by the following equations: 
\begin{align}
	\label{1.1}
	&\begin{aligned}[b]
	W_{n+1,i}
	=&
	\frac{\sum_{j=1}^{N}
	p_{\theta_{n}}(\hat{X}_{n+1,i}|\hat{X}_{n,j} )
	\nabla_{\theta} q_{\theta_{n}}(Y_{n}|\hat{X}_{n,j} ) }
	{\sum_{j=1}^{N} p_{\theta_{n}}(\hat{X}_{n+1,i}|\hat{X}_{n,j} ) q_{\theta_{n}}(Y_{n}|\hat{X}_{n,j} ) }
	\\
	&+
	\frac{\sum_{j=1}^{N}
	\nabla_{\theta} p_{\theta_{n}}(\hat{X}_{n+1,i}|\hat{X}_{n,j} )
	q_{\theta_{n}}(Y_{n}|\hat{X}_{n,j} ) }
	{\sum_{j=1}^{N} p_{\theta_{n}}(\hat{X}_{n+1,i}|\hat{X}_{n,j} ) q_{\theta_{n}}(Y_{n}|\hat{X}_{n,j} ) }
	\\
	&+
	\frac{\sum_{j=1}^{N} p_{\theta_{n}}(\hat{X}_{n+1,i}|\hat{X}_{n,j} )
	q_{\theta_{n}}(Y_{n}|\hat{X}_{n,j} ) W_{n,j} }
	{\sum_{j=1}^{N} p_{\theta_{n}}(\hat{X}_{n+1,i}|\hat{X}_{n,j} ) q_{\theta_{n}}(Y_{n}|\hat{X}_{n,j} ) },
	\end{aligned}
	\\
	&\begin{aligned}[b]
	\hat{W}_{n+1,i}
	=&
	W_{n+1,i}-\frac{1}{N}\sum_{j=1}^{N} W_{n+1,j},
	\end{aligned}
	\\
	\label{1.3}
	&\begin{aligned}[b]
	\theta_{n+1}
	=
	\theta_{n}
	+
	\alpha_{n}
	&
	\left(
	\frac{\sum_{j=1}^{N}
	q_{\theta_{n}}(Y_{n+1}|\hat{X}_{n+1,j} )
	\hat{W}_{n+1,j} }
	{\sum_{j=1}^{N} q_{\theta_{n}}(Y_{n+1}|\hat{X}_{n+1,j}) }
	\right. 
	\\
	&+
	\left.  
	\frac{\sum_{j=1}^{N}
	\nabla_{\theta} q_{\theta_{n}}(Y_{n+1}|\hat{X}_{n+1,j} ) }
	{\sum_{j=1}^{N} q_{\theta_{n}}(Y_{n+1}|\hat{X}_{n+1,j}) }
	\right)
	\end{aligned}
\end{align}
for $n\geq 0$, $1\leq i\leq N$. Here, $N\geq 1$ is an integer corresponding to the number of particles and $\{\alpha_{n} \}_{n\geq 0}$ is a sequence of positive real numbers.
$\big\{ \hat{X}_{n+1,i}: 1\leq i\leq N \big\}$ are the particles generated
through the sequential Monte Carlo scheme
\begin{align}\label{1.5}
	\hat{X}_{n+1,i}
	\sim
	\frac{\sum_{j=1}^{N} p_{\theta_{n}}(x|\hat{X}_{n,j}) q_{\theta_{n}}(Y_{n}|\hat{X}_{n,j})
	\mu(dx) }
	{\sum_{j=1}^{N} q_{\theta_{n}}(Y_{n}|\hat{X}_{n,j}) }.
\end{align}
In (\ref{1.5}), $\big\{ \hat{X}_{n+1,i}: 1\leq i\leq N \big\}$ are sampled independently
one from another and independently from
$\big\{ X_{k}: 0\leq k\leq n \big\}$,
$\big\{ \theta_{k},Y_{k},\hat{X}_{k,i}: 0\leq k< n, 1\leq i\leq N \big\}$.
Moreover, in (\ref{1.1}) -- (\ref{1.5}),
$\theta_{0}\in\Theta$, $\{\hat{X}_{0,i}: 1\leq i\leq N \}\subset{\cal X}$
and $\{W_{0,i}: 1\leq i\leq N \}\subset\mathbb{R}^{d\times N}$ are
selected independently from $(X_{0},Y_{0} )$.

\begin{remark}
Recursion (\ref{1.3}) usually involves a device which keeps $\{\theta_{n} \}_{n\geq 0}$
within a compact subset of $\Theta$.
This device is usually based on the projection to a compact domain
(for details, see \cite[Section 5.4]{borkar}, 
\cite[Sections 5.1, 5.2]{kushner&yin} 
and references cited therein).
As algorithm (\ref{1.1}) -- (\ref{1.5}) is already a very complex procedure,
this aspect is not considered here.
Instead, similarly
as in \cite[Part II]{benveniste}, \cite{borkar}, \cite{kushner&yin}, \cite{tadic2},
our results on the asymptotic
behavior of the algorithm (Theorem \ref{theorem2}, below) are expressed
in a local form.
\end{remark}

The variables appearing in algorithm (\ref{1.1}) -- (\ref{1.5})
have the following meaning.
$\theta_{0}$, $\hat{X}_{0,1},\dots,\hat{X}_{0,N}$ and $W_{0,1},\dots,W_{0,N}$
are the initial conditions.
$\hat{X}_{n,1},\dots,\hat{X}_{n,N}$ are particles whose empirical distribution approximates the predictive distribution of $X_n$ given $Y_1,...,Y_{n-1}$ and its derivative
(computed using parameter $\theta_{n}$ at time $n$),
while
$W_{n,1},\dots,W_{n,N}$ are vector-valued weights in the particle approximation
to this derivative.\footnote
{The empirical measures
\begin{align*}
	\frac{1}{N} \sum_{i=1}^{N} {\delta}_{\hat{X}_{n,i}}(dx),
	\;\;\;\;\;
	\frac{1}{N}
	\sum_{i=1}^{N} \left(W_{n,i} - \frac{1}{N} \sum_{j=1}^{N} W_{n,j} \right)
	{\delta}_{\hat{X}_{n,i}}(dx)
\end{align*}
can be viewed as particle approximations (respectively) to
the optimal (one-step) predictor and its gradient at discrete-time $n$.
Here and throughout the paper, ${\delta}_{x}(dx')$ denotes the Dirac measure centered at $x\in{\cal X}$.
}
$\theta_{n}$ is an estimate of maxima to the log-likelihood rate $l(\theta)$.
$\alpha_{n}$ is the step-size in recursion (\ref{1.1}).
Recursion (\ref{1.3}) is a stochastic gradient search maximizing $l(\theta)$.\footnote
{The fraction on the right-hand side of (\ref{1.3}) is a Monte Carlo
estimate of $l(\theta)$.}
Recursions (\ref{1.1}) and (\ref{1.5}) are procedures
through which the particle approximations
to the optimal filter and its derivative are updated.
More details on the recursive particle maximum likelihood algorithm can be found in \cite{poyiadjis&doucet&singh}.

\subsection{Convergence Results}
To formulate the assumptions under which the analysis of the recursive particle maximum likelihood estimation procedure is carried out,
we introduce further notation.
$\mathbb{N}_{0}$ is the set of non-negative integers,
while $\mathbb{C}^{d}$ is the set $d$-dimensional complex valued vectors.
For $\boldsymbol\alpha=(a_{1},\dots,a_{d})\in\mathbb{N}_{0}^{d}$,
$\theta=(t_{1},\dots,t_{d})\in\mathbb{R}^{d}$,
notation $|\boldsymbol\alpha|$ and $\partial_{\theta}^{\boldsymbol\alpha}$ stand for
\begin{align*}
	|\boldsymbol\alpha|
	=
	a_{1} + \dots + a_{d},
	\;\;\;\;\;
	\partial_{\theta}^{\boldsymbol\alpha}
	=
	\frac{\partial_{\theta}^{|\boldsymbol\alpha|} }{\partial t_{1}^{a_{1}} \cdots t_{d}^{a_{d}} }.
\end{align*}
For $\eta\in\mathbb{C}^{d}$, $\|\eta\|$ denotes the Euclidean norm of $\eta$.
For $\delta\in(0,\infty)$, $\eta\in\mathbb{C}^{d}$ and $A\subset\mathbb{C}^{d}$,
$V_{\delta}(A)$, $d(\eta,A)$ denote the $\delta$-vicinity of $A$
and the distance between $\eta$, $A$,
i.e.,
\begin{align*}
	&
	d(\eta,A)=
	\inf_{\eta'\in A} \|\eta-\eta'\|,
	\\
	&
	V_{\delta}(A)
	=
	\{\eta'\in\mathbb{C}^{d}: \exists\eta''\in A, \|\eta'-\eta''\|<\delta \}.
\end{align*}

Let $Q$ be any compact set satisfying $Q\subset\Theta$.
The asymptotic properties of algorithm (\ref{1.1}) -- (\ref{1.5})
are analyzed under the following assumptions.

\begin{assumption}\label{a1}
$\sum_{n=0}^{\infty}\alpha_{n}=\infty$,
$\sum_{n=0}^{\infty}\alpha_{n}^{2}<\infty$
and
$\sum_{n=0}^{\infty} |\alpha_{n}-\alpha_{n+1}|<\infty$.
\end{assumption}

\begin{assumption}\label{a2}
There exist a probability measure $\pi(dx)$
and real numbers $\rho\in(0,1)$, $K\in[1,\infty)$
such that
\begin{align*}
	|P^{n}(x,B)-\pi(B)|\leq K\rho^{n}
\end{align*}
for all $x\in{\cal X}$, $B\in{\cal B}({\cal X})$, $n\geq 0$.
\end{assumption}

\begin{assumption}\label{a3}
There exists a real number $\varepsilon_{Q}\in(0,1)$ such that
\begin{align*}
	\varepsilon_{Q} \leq p_{\theta}(x'|x) \leq \frac{1}{\varepsilon_{Q} },
	\;\;\;\;\;
	\varepsilon_{Q} \leq q_{\theta}(y|x) \leq \frac{1}{\varepsilon_{Q} }
\end{align*}
for all $\theta\in Q$, $x,x'\in{\cal X}$, $y\in{\cal Y}$.
\end{assumption}

\begin{assumption}\label{a4}
There exists a real number $K_{1,Q}\in[1,\infty )$ such that
\begin{align*}
	&
	\max\{\|\nabla_{\theta}p_{\theta}(x'|x) \|, \|\nabla_{\theta}q_{\theta}(y|x) \| \}
	\leq
	K_{1,Q},
	\\
	&
	\max\{|p_{\theta}(x'|x)-p_{\theta'}(x'|x) |,
	\|\nabla_{\theta}p_{\theta}(x'|x) - \nabla_{\theta}p_{\theta'}(x'|x) \|
	\}
	\\
	&\leq
	K_{1,Q}\|\theta-\theta'\|,
	\\
	&
	\max\{|q_{\theta}(y|x)-q_{\theta'}(y|x) |,
	\|\nabla_{\theta}q_{\theta}(y|x) - \nabla_{\theta}q_{\theta'}(y|x) \| \}
	\\
	&\leq
	K_{1,Q}\|\theta-\theta'\|
\end{align*}
for all $\theta,\theta'\in Q$, $x,x'\in{\cal X}$, $y\in{\cal Y}$.
\end{assumption}

\begin{assumption}\label{a5}
$p_{\theta}(x'|x)$ and $q_{\theta}(y|x)$ are $p$-times differentiable in $\theta$
for each $\theta\in\Theta$, $x,x'\in{\cal X}$, $y\in{\cal Y}$,
where $p>d$.
Moreover, there exists a real number $K_{2,Q}\in[1,\infty )$ such that
\begin{align*}
	|\partial_{\theta}^{\boldsymbol\alpha} p_{\theta}(x'|x) | \leq K_{2,Q},
	\;\;\;\;\;
	|\partial_{\theta}^{\boldsymbol\alpha} q_{\theta}(y|x) | \leq K_{2,Q}
\end{align*}
for all $\theta\in Q$, $x,x'\in{\cal X}$, $y\in{\cal Y}$,
$\boldsymbol\alpha\in \mathbb{N}_{0}^{d}$
satisfying $|\boldsymbol\alpha|\leq p$.
\end{assumption}

\begin{assumption}\label{a6}
$p_{\theta}(x'|x)$ and $q_{\theta}(y|x)$ are real-analytic in $\theta$
for each $\theta\in\Theta$, $x,x'\in{\cal X}$, $y\in{\cal Y}$.
Moreover, $p_{\theta}(x'|x)$ and $q_{\theta}(y|x)$
have (complex-valued) continuations
$\hat{p}_{\eta}(x'|x)$ and $\hat{q}_{\eta}(y|x)$ with the following properties:

(i) $\hat{p}_{\eta}(x'|x)$ and $\hat{q}_{\eta}(y|x)$ map $\eta\in\mathbb{C}^{d}$,
$x,x'\in{\cal X}$, $y\in{\cal Y}$ to $\mathbb{C}$.

(ii) $\hat{p}_{\theta}(x'|x)=p_{\theta}(x'|x)$ and
$\hat{q}_{\theta}(y|x)=q_{\theta}(y|x)$ for all $\theta\in\Theta$,
$x,x'\in{\cal X}$, $y\in{\cal Y}$.

(iii) There exists a real number $\delta_{Q}\in(0,1)$ such that
$\hat{p}_{\eta}(x'|x)$ and $\hat{q}_{\eta}(y|x)$ are analytic in $\eta$
for each $\eta\in V_{\delta_{Q}}(Q)$, $x,x'\in{\cal X}$, $y\in{\cal Y}$.

(iv) There exists a real number $K_{3,Q}\in(0,1)$ such that
\begin{align*}
	|\hat{p}_{\eta}(x'|x) | \leq K_{3,Q},
	\;\;\;\;\;
	|\hat{q}_{\eta}(y|x) | \leq K_{3,Q}
\end{align*}
for all $\eta\in V_{\delta_{Q}}(Q)$, $x,x'\in{\cal X}$, $y\in{\cal Y}$.
\end{assumption}

Assumption \ref{a1} corresponds to the step-size sequence $\{\alpha_{n} \}_{n\geq 0}$
and its asymptotic properties. This assumption is standard in any asymptotic analysis
of stochastic gradient search and stochastic approximation
(see e.g., \cite{benveniste}, \cite{borkar}, \cite{kushner&yin}).
It holds when $\alpha_{n}=1/n^{a}$ for $n\geq 1$,
where $a\in(1/2,1]$.

Assumption \ref{a2} is related to the stability of the true system $\{ (X_{n},Y_{n}) \}_{n\geq 0}$.
It requires $\{X_{n} \}_{n\geq 0}$ to be uniformly ergodic. Assumption \ref{a3} implies the stability of the optimal filter for
$\left\{ (X_{n}^{\theta,\lambda}, Y_{n}^{\theta,\lambda} ) \right\}_{n\geq 0}$ 
(i.e., it ensures that the optimal filter forgets its initial condition exponentially fast).
Assumption \ref{a2} and \ref{a3} are restrictive from the theoretical point of view
as they implicitly require the state and observation spaces ${\cal X}$ and ${\cal Y}$ to be bounded. However, these assumptions are very commonly used in the literature 
(see, e.g., \cite{cappe&moulines&ryden}, \cite{crisan&rozovskii}, \cite{douc&moulines&stoffer}).

Assumptions \ref{a4} -- \ref{a6} are related to the parameterization of the candidate models
$\left\{ (X_{n}^{\theta,\lambda}, Y_{n}^{\theta,\lambda} ) \right\}_{n\geq 0}$
and its analytical properties 
(i.e. to the analytical properties of conditional densities $p_{\theta}(x'|x)$ and $q_{\theta}(y|x)$).
The purpose of Assumption \ref{a4} is to ensure that the Poisson equation associated with
algorithm (\ref{1.1}) -- (\ref{1.5}) has a locally Lipschitz solution
(see Lemma \ref{lemma2.4}).
Assumption \ref{a4} also ensures that the log-likelihood rate $l(\theta)$
is Lipschitz continuously differentiable (see Lemmas \ref{lemma1.3}, \ref{lemma3.3}).
This Poisson equation plays a crucial role in the analysis of the asymptotic error
in the Monte Carlo estimation of $\nabla l(\theta)$
(see Lemma \ref{lemma3.2} and (\ref{3*.1})),
while the Lipschitz continuity of $\nabla l(\theta)$ allows us
to analyze algorithm (\ref{1.1}) -- (\ref{1.5}) using the results on
Lipschitz gradient flows
(see Theorem \ref{theorem2}, Part (i)).
The purpose of Assumption \ref{a5} is to provide for $l(\theta)$ to be at least $(d+1)$-times
differentiable (see Lemma \ref{lemma3.3}, Part (ii)),
while Assumption \ref{a6} ensures for $l(\theta)$ to be real-analytic
(see Lemma \ref{lemma3.3}, Part (iii)).
These analytical properties of $l(\theta)$ allows us to establish
qualitative upper bounds on the asymptotic error in the estimation of maxima to $l(\theta)$
(see Theorem \ref{theorem2}, Parts (ii), (iii)).

In order to state the main results of the paper, we need further notation.
${\cal S}$ and $l({\cal S})$ are the sets of stationary points and critical values of $l(\theta)$
(respectively),
i.e.,
\begin{align}\label{1.51}
	{\cal S}
	=
	\{\theta\in\Theta: \nabla l(\theta)=0 \},
	\;\;\;\;\;
	l({\cal S})
	=
	\{l(\theta): \theta\in{\cal S} \}.
\end{align}
$\pi:\mathbb{R}\times\Theta\rightarrow\Theta$ is the solution to the ODE
$d\theta/dt=\nabla l(\theta)$ which satisfies the initial condition $\pi(0,\theta)=\theta$
for $\theta\in\Theta$. 
${\cal R}$ is the set of chain-recurrent points of the ODE $d\theta/dt=\nabla l(\theta)$,
i.e., $\theta\in{\cal R}$ if and only if
for any $\delta,t\in(0,\infty)$, there exist an integer $n\geq 1$,
real numbers $t_{1},\dots,t_{n}\in[t,\infty)$ and vectors $\vartheta_{1},\dots,\vartheta_{n}\in\Theta$
(each of which can depend on $\theta$, $\delta$, $t$) such that
$\|\vartheta_{1} - \theta \|\leq\delta$, 
$\|\pi(t_{n},\vartheta_{n}) -\theta \|\leq\delta$ and  
\begin{align*}
	\|\vartheta_{k+1} - \pi(t_{k},\vartheta_{k} ) \|\leq\delta
\end{align*}
for $1\leq k< n$.

\begin{remark}
Chain-recurrent points ${\cal R}$ can be interpreted as limit points of slightly perturbed solutions to
the ODE $d\theta/dt=\nabla l(\theta)$.
Since the piecewise linear interpolation of sequence $\{\theta_{n} \}_{n\geq 0}$
is such a solution
(see (\ref{3*.1}) and Lemma \ref{lemma3.2}; see also \cite[Section 5]{tadic&doucet21}),
the chain-recurrence is closely related to the asymptotic behavior
of algorithm (\ref{1.1}) -- (\ref{1.5}).
Regarding stationary and chain-recurrent points,
the following relationship can be established.
If $l(\theta)$ is Lipschitz continuously differentiable,
then all stationary points ${\cal S}$ are chain-recurrent for
the ODE $d\theta/dt=\nabla l(\theta)$
(i.e., ${\cal S}\subseteq{\cal R}$).
If additionally $l({\cal S})$ is of a zero Lebesgue measure
(which holds when $l(\theta )$ is $d$-times continuously differentiable),
then all chain-recurrent points ${\cal R}$ are stationary for
the ODE $d\theta/dt=\nabla l(\theta)$
(i.e., ${\cal S}={\cal R}$).
However, if $l(\theta)$ is only Lipschitz continuously differentiable,
then ${\cal S}={\cal R}$ does not necessarily hold and ${\cal R}\setminus{\cal S}\neq\emptyset$ is quite possible
(for details, see \cite[Section 4]{hurley}).
For more details on chain-recurrence, see
\cite{benaim2}, \cite{benaim3}, \cite{borkar}.
\end{remark}

Let $Q$ be any compact set satisfying $Q\subset\Theta$, while
$\Lambda_{Q}$ is the event defined by
\begin{align}\label{1.301}
	\Lambda_{Q}
	=
	\liminf_{n\rightarrow\infty} \{\theta_{n}\in Q\}
	=
	\bigcup_{n=0}^{\infty} \bigcap_{k=n}^{\infty} \{\theta_{k}\in Q \}.
\end{align}
Then, the main results of this paper are summarized in the next theorem.

\begin{theorem}\label{theorem2}
(i) If Assumptions  \ref{a1} -- \ref{a4} hold, then there exists a non-decreasing function 
$\psi_{Q}:[0,\infty)\rightarrow[0,\infty)$
depending only on $l(\theta)$, $p_{\theta}(x'|x)$, $q_{\theta}(y|x)$)
such that $\lim_{t\rightarrow 0} \psi_{Q}(t) = \psi_{Q}(0) = 0$ and
\begin{align*}
	\limsup_{n\rightarrow\infty } d(\theta_{n}, {\cal R} )
	\leq
	\psi_{Q}\left(\frac{1}{N} \right)
\end{align*}
almost surely on $\Lambda_{Q}$.

(ii) If Assumptions \ref{a1} -- \ref{a5} hold,
then there exists a real number $L_{1,Q}\in[1,\infty)$
(independent of $N$ and depending only on $l(\theta)$, $p_{\theta}(x'|x)$, $q_{\theta}(y|x)$) such that
\begin{align*}
	&
	\limsup_{n\rightarrow\infty}
	\|\nabla l(\theta_{n} ) \|
	\leq
	\frac{L_{1,Q}}{N^{q/2} },
	\\
	&
	\limsup_{n\rightarrow\infty} l(\theta_{n} ) - \liminf_{n\rightarrow\infty} l(\theta_{n} )
	\leq
	\frac{L_{1,Q}}{N^{q} }
\end{align*}
almost surely on $\Lambda_{Q}$, where $q=(p-d)/(p-1)$.

(iii) If Assumptions  \ref{a1} -- \ref{a4} and \ref{a6} hold,
then there exist real numbers $r_{Q}\in(0,1)$, $L_{2,Q}\in[1,\infty)$
(independent of $N$ and depending only on $l(\theta)$, $p_{\theta}(x'|x)$, $q_{\theta}(y|x)$) such that
\begin{align*}
	&
	\limsup_{n\rightarrow\infty} d(\theta_{n}, {\cal S} )
	\leq
	\frac{L_{2,Q}}{N^{r_{Q} } },
	\\
	&
	\limsup_{n\rightarrow\infty}
	\|\nabla l(\theta_{n} ) \|
	\leq
	\frac{L_{2,Q}}{N^{1/2} },
	\\
	&
	\limsup_{n\rightarrow\infty} d(l(\theta_{n} ), l({\cal S} ) )
	\leq
	\frac{L_{2,Q}}{N}
\end{align*}
almost surely on $\Lambda_{Q}$.
\end{theorem}

Theorem \ref{theorem2} is proved in Section \ref{section3*}.

\begin{remark}
The function $\psi_{Q}(t)$ and the real numbers $L_{1,Q}$, $L_{2,Q}$
depend on $p_{\theta}(x'|x)$, $q_{\theta}(y|x)$ through constants
$\varepsilon_{Q}$, $K_{1,Q}$ (specified in Assumptions \ref{a3}, \ref{a4}).
$\psi_{Q}(t)$ also depends on $l(\theta)$ through
a Lipschitz constant of $\nabla l(\theta)$,
an upper bound of $\|\nabla l(\theta) \|$ and the geometric properties of ${\cal R}$.
$L_{1,Q}$, $L_{2,Q}$ depend on $l(\theta)$ through
a Lipschitz constant of $\nabla l(\theta)$ and
an upper bound of $\|\nabla l(\theta) \|$.
Additionally, $L_{1,Q}$, $L_{2,Q}$ also depend on $l(\theta)$ through
the Yomdin and Lojasiewicz constants for $l(\theta)$.\footnote
{If $l(\theta)$ is $p$-times differentiable and $p>d$
(which is true under Assumptions \ref{a2} -- \ref{a5}; see Part (ii) of Lemma \ref{lemma3.3}),
there exists a real number $M_{1,Q}\in[1,\infty)$ such that
\begin{align*}
	m\left(\{l(\theta): \theta\in Q, \|\nabla l(\theta) \|\leq \varepsilon \} \right)
	\leq
	M_{1,Q}\varepsilon^{q}
\end{align*}
for all $\varepsilon\in[1,\infty)$,
where $m(\cdot)$ is the Lebesgue measure on $\mathbb{R}^{d}$
($q$ is specified in the statement of Theorem \ref{theorem2}).
This result is known as the Yomdin theorem (see \cite[Theorem 1.2]{yomdin}),
while $M_{1,Q}$ is referred to as the Yomdin exponent.

If $l(\theta)$ is real-analytic on $\Theta$
(which holds under Assumptions \ref{a2} -- \ref{a4}, \ref{a6}; see Part (iii) of Lemma \ref{lemma3.3}),
there exist real numbers $r_{Q}\in(0,1)$, $M_{2,Q}, M_{3,Q}\in[1,\infty)$ such that
\begin{align*}
	d(\theta,{\cal S} )
	\leq
	M_{2,Q} \|\nabla l(\theta) \|^{r_{Q} },
	\;\;\;\;\;
	d(l(\theta), l({\cal S} ) )
	\leq
	M_{3,Q} \|\nabla l(\theta ) \|
\end{align*}
for all $\theta\in Q$.
These inequalities are known as the Lojasiewicz inequalities
(see \cite[Theorem 6.4, Remark 6.5]{bierstone&milman},
\cite[Theorem \L I, Page 775]{kurdyka}).
$M_{2,Q}$, $M_{3,Q}$ are referred to as the Lojasiewicz constants,
while $r_{Q}$ is called the Lojasiewicz exponent.
}
$r_{Q}$ is the Lojasiewicz exponent for $l(\theta)$.
For further details on how $\psi_{Q}(t)$, $r_{Q}$, $L_{1,Q}$, $L_{2,Q}$
depend on $l(\theta)$, $p_{\theta}(x'|x)$, $q_{\theta}(y|x)$,
see \cite{tadic&doucet21}.
\end{remark}

As algorithm (\ref{1.1}) -- (\ref{1.5}) is a stochastic gradient search
maximizing the log-likelihood rate $l(\theta)$,
the asymptotic properties of sequences
$\{\theta_{n} \}_{n\geq 0}$,
$\{ l(\theta_{n} ) \}_{n\geq 0}$ and
$\{ \nabla l(\theta_{n} ) \}_{n\geq 0}$
provide a natural way to characterize the asymptotic behavior of this algorithm.
If the estimation of $\nabla l(\theta)$ in algorithm (\ref{1.1}) -- (\ref{1.5})
were based on the exact optimal filter
instead of a particle approximation,
the corresponding estimator would be asymptotically consistent.
Then, according to the existing results on stochastic optimization,
sequences $\{\theta_{n} \}_{n\geq 0}$, $\{l(\theta_{n} ) \}_{n\geq 0}$ and
$\{\nabla l(\theta_{n} ) \}_{n\geq 0}$ would exhibit the following behavior.
If $\nabla l(\theta)$ was estimated using the exact optimal filter
and $l(\theta)$ was Lipschitz continuously differentiable,
then the limits
\begin{align}
	&\label{1.7a}
	\lim_{n\rightarrow\infty} d(\theta_{n}, {\cal R} ) = 0,
	\\
	&\label{1.7b}
	\lim_{n\rightarrow\infty} d(l(\theta_{n}), l({\cal R}) ) =0,
	\\
	&\label{1.7c}
	\liminf_{n\rightarrow\infty} d(\theta_{n}, {\cal S} ) = 0
\end{align}
would hold almost surely on the event
$\{\sup_{n\geq 0} \|\theta_{n}\|<\infty,$ $\inf_{n\geq 0} d(\theta_{n}, \Theta^{c} ) > 0 \}$ 
(see, e.g., \cite[Proposition 4.1, Theorem 5.7]{benaim2}).
If $l(\theta)$ was additionally $(d+1)$-times differentiable,
then the limits
\begin{align}
	\label{1.9a}
	\lim_{n\rightarrow\infty} d(\theta_{n}, {\cal S} ) \!=\! 0,
	&\;\;\;
	\lim_{n\rightarrow\infty} \nabla l(\theta_{n} ) \!=\! 0,
	\\
	\label{1.9b}
	\lim_{n\rightarrow\infty} d(l(\theta_{n}), l({\cal S}) ) \!=\! 0,
	&\;\;\:
	\limsup_{n\rightarrow\infty} l(\theta_{n})
	\!=\!
	\liminf_{n\rightarrow\infty} l(\theta_{n})
\end{align}
would hold almost surely on
$\{\sup_{n\geq 0} \|\theta_{n}\|<\infty,$ $\inf_{n\geq 0} d(\theta_{n}, \Theta^{c} ) > 0 \}$
(see e.g., \cite[Corollary 6.7]{benaim2}).
Since algorithm (\ref{1.1}) -- (\ref{1.5}) estimates $\nabla l(\theta)$
using a particle approximation,
the corresponding estimator is biased.
Consequently, the limits (\ref{1.7a}) -- (\ref{1.9b}) do not hold for algorithm (\ref{1.1}) -- (\ref{1.5}).
Instead, the following limits
\begin{align}
	\label{1.11.a}
	\limsup_{n\rightarrow\infty} d(\theta_{n}, {\cal R} ),
	&\;\;\;
	\limsup_{n\rightarrow\infty} \|\nabla l(\theta_{n} ) \|,
	\\
	\label{1.11.b}
	\limsup_{n\rightarrow\infty} d(l(\theta_{n}), l({\cal R}) ),
	&\;\;\; 
	\limsup_{n\rightarrow\infty} l(\theta_{n})
	-
	\liminf_{n\rightarrow\infty} l(\theta_{n})
\end{align}
take strictly positive values.
These limits directly depend on the accuracy of the particle
approximations to the optimal filter and its derivative.

Theorem \ref{theorem2} provides qualitative upper bounds on the limits (\ref{1.11.a}), (\ref{1.11.b})
in terms of the number of particles $N$ and the analytical properties of the log-likelihood rate $l(\theta)$.
These bounds are of the almost sure type
and based on the strong mixing condition (Assumption \ref{a3}).
As such, they can be considered as of the worst-case type.
Moreover, these bounds can be rather loose in scenarios
for which the strong mixing condition is too conservative or even undesirable
(e.g., when $q_{\theta}(\cdot|x)$ is concentrated, 
while $p_{\theta}(\cdot|x)$ is diffuse). 
This is partly due to the fact that the recursive particle maximum likelihood algorithm 
analyzed in the paper is based on 
the bootstrap particle filter (\ref{1.5}),  
which is well-known to perform poorly in such scenarios. 
We believe that the bounds could be improved
using more sophisticated schemes 
which sample particles relying on a distribution dependent on the observations 
\cite{pitt&shepard}. 
These bounds could also be made
tighter using the (non-mixing) assumptions on the optimal filter stability adopted in
\cite{douc&fort&moulines&priouret}. 
However, this would require substantial generalization of the existing results
on the stability of the optimal filter derivatives and their particle approximations
(i.e., of the results of \cite{tadic&doucet1}, \cite{tadic&doucet4}).
Since the analysis of the optimal filter derivatives and their particle approximations
would be very difficult under non-mixing stability conditions
and since the results presented here are already complex,
this generalization is left for future research.

\subsection{Outline of Proofs of Convergence Results}

An outline/summary of the main steps and key ideas in the proof of Theorem \ref{theorem2} is provided here.
These steps and ideas can be described as follows. 

{\em Step 1:}
Algorithm (\ref{1.1}) -- (\ref{1.5}) is transformed to stochastic approximation
with Markovian dynamics.
More specifically, it is rewritten as
\begin{align}
	\label{2*.1}
	&
	W_{n+1}
	= 
	W_{n} A_{\theta_{n} }(V_{n},V_{n+1} )
	+
	B_{\theta_{n} }(V_{n},V_{n+1} ),
	\\
	\label{2*.9}
	&
	\theta_{n+1}
	=
	\theta_{n} + \alpha_{n} \left(W_{n+1}C_{\theta_{n}}(V_{n+1}) + D_{\theta_{n}}(V_{n+1}) \right).
\end{align}
The same algorithm is also rewritten as
\begin{align}\label{2*.9'}
	\theta_{n+1}
	=&
	\theta_{n} + \alpha_{n}H(\theta_{n},Z_{n+1} ).
\end{align}
Here, $\{V_{n} \}_{n\geq 0}$, $\{W_{n} \}_{n\geq 0}$ and $\{Z_{n} \}_{n\geq 0}$ 
are the stochastic processes
defined by 
$W_{n}=\big( W_{n,1},\dots,W_{n,N} \big)$ 
and 
\begin{align}\label{2*.403}
	V_{n}
	=
	\big( Y_{n}, X_{n}, \hat{X}_{n} \big),
	\;\;\;\;\; 
	Z_{n}
	=
	\big( V_{n}, W_{n} \big),
\end{align}
while $\hat{X}_{n}$ is the vector of particles $\hat{X}_{n} =	\big( \hat{X}_{n,1},\dots, \hat{X}_{n,N} \big)$
($W_{n}$ is the $d\times N$ matrix whose $j$-th column is $W_{n,j}$).
$A_{\theta}(v,v')$, $B_{\theta}(v,v')$, $C_{\theta}(v)$, $D_{\theta}(v)$ and $H(\theta,z)$
are suitably chosen functions which are defined precisely in
(\ref{2*.101.a}) -- (\ref{2*.401}).
Equations (\ref{2*.1}) -- (\ref{2*.9'}) are a compact form of (\ref{1.1}) -- (\ref{1.3}),
while terms
\begin{align}\label{2*.403'}
	W_{n+1}C_{\theta_{n}}(V_{n+1}) + D_{\theta_{n}}(V_{n+1}),
	\;\;\;\;\;
	H(\theta_{n},Z_{n+1} )
\end{align}
can be viewed as Monte Carlo estimators of $\nabla l(\theta_{n} )$.
Aggregate process $\{(\theta_{n}, Z_{n} ) \}_{n\geq 0}$ is a Markov chain,
while stochastic processes $\{ V_{n} \}_{n\geq 0}$ and $\{Z_{n} \}_{n\geq 0}$
can be interpreted as Markov chains controlled by estimates $\{\theta_{n} \}_{n\geq 0}$
(see (\ref{2*.7'}), (\ref{2*.7})).

{\em Step 2:}
We analyze conditional probability measure of $V_{n+1}$ given $V_{n}=v$, $\theta_{n}=\theta$,
which is denoted by $T_{\theta}(v,dv')$ and precisely defined in (\ref{2*.103'}).
It is shown that $T_{\theta}(v,dv')$ is geometrically ergodic with a rate (locally) uniform in $\theta$.
It is also established that $T_{\theta}(v,dv')$ is (locally) Lipschitz continuous in $\theta$. The details are included in Lemma \ref{lemma2.1}.

{\em Step 3:} We consider the conditional expectations of the products
\begin{align}
	&\label{2*.901.a}
	A_{\theta_{0}}(V_{0},V_{1})\cdots A_{\theta_{n-1}}(V_{n-1},V_{n})
	C_{\theta_{n}}(V_{n}),
	\\
	&\label{2*.901.b}
	B_{\theta_{0}}(V_{0},V_{1})A_{\theta_{1}}(V_{1},V_{2})\cdots A_{\theta_{n-1}}(V_{n-1},V_{n})
	C_{\theta_{n}}(V_{n})
\end{align}
given $\theta_{0}=\theta,\dots,\theta_{n}=\theta$,
$V_{0}=v$.
These conditional expectations are denoted by $\Phi_{\theta}^{n}(v)$, $\Psi_{\theta}^{n}(v)$
and defined precisely in (\ref{2*.107.a}) -- (\ref{2*.107''.b}).
Using results on stochastic matrices
(see Appendix \ref{appendix1}) and the results of Step 2,
it is shown that functions $\Phi_{\theta}^{n}(v)$, $\Psi_{\theta}^{n}(v)$
converge exponentially to zero as $n\rightarrow\infty$
with rates (locally) uniform in $\theta$.
The same functions are also shown to be (locally) Lipschitz continuous with Lipschitz constants
tending exponentially to zero as $n\rightarrow\infty$.
The details are provided in Lemma \ref{lemma2.3}.

{\em Step 4:}
Function $(\Pi^{n}H)(\theta,z)$ and its properties are analyzed, where
$(\Pi^{n}H)(\theta,z)$ is the conditional expectation of $H(\theta_{n},Z_{n+1})$
given $\theta_{0}=\theta,\dots,\theta_{n}=\theta$, $Z_{0}=z$ (see (\ref{l2.4.303})).
$\Pi_{\theta}(z,dz')$ is the conditional probability measure of $Z_{n+1}$
given $Z_{n}=z$, $\theta_{n}=\theta$,
which is defined precisely in (\ref{2*.103''}).
Relying on the results of Step 3,
it is shown that there exists a function $h(\theta)$ such that
$(\Pi^{n}H)(\theta,z)$ converges exponentially to $h(\theta)$
as $n\rightarrow\infty$ at a rate (locally) uniform in $\theta$.
It is also shown that $(\Pi^{n}H)(\theta,z)-h(\theta)$
is (locally) Lipschitz continuous in $\theta$ with a Lipschitz constant
tending exponentially to zero as $n\rightarrow\infty$.
The details are included in Lemma \ref{lemma2.4} (see (\ref{2*.33}), (\ref{l2.4.501}), (\ref{l2.4.23'})).

{\em Step 5:}
The Poisson equation associated with algorithm (\ref{1.1}) -- (\ref{1.5})
(i.e., with functions $H(\theta,z)$, $h(\theta)$ and the transition kernel
$\Pi_{\theta}(z,dz')$)
and its properties are considered.
Relying on the results of Step 4, it is shown that the Poisson equation
has a solution and that the solution is (locally) Lipschitz continuous in $\theta$.
The details are provided in Lemma \ref{lemma2.4}.

{\em Step 6:}
The weight sequence $\{W_{n} \}_{n\geq 0}$ and its stability are studied.
Using results on stochastic matrices (see Appendix \ref{appendix1}),
it is shown that $W_{n}$ is (deterministically) bounded in $n$.
The details are contained in Lemma \ref{lemma3.1}.

{\em Step 7:}
The Monte Carlo estimators (\ref{2*.403'}) and their statistical properties are analyzed. By exploiting the results of \cite{tadic&doucet4}, it is shown that the asymptotic bias of these estimators are inversely proportional to $N$ with a multiplicative constant uniform in $\theta$.
The details are included in Lemma \ref{lemma2.5}.

{\em Step 8:}
Algorithm (\ref{1.1}) -- (\ref{1.5}) is transformed to
a stochastic gradient search with additive noise.
More specifically, it is rewritten as
\begin{align}\label{3*.1}
	\theta_{n+1}
	=
	\theta_{n}+\alpha_{n}(\nabla l(\theta_{n})+\xi_{n}).
\end{align}
Moreover, the additive noise
$\xi_{n}$ is decomposed as $\xi_{n}=\zeta_{n}+\eta_{n}$,
where 
\begin{align*}
	\xi_{n}=H(\theta_{n},Z_{n+1} ) - h(\theta_{n} ), 
	\;\;\;\;\; 
	\zeta_{n}=h(\theta_{n})-\nabla l(\theta_{n} ).
\end{align*}
$\nabla l(\theta_{n})+\xi_{n}$ can be interpreted as an estimator of $\nabla l(\theta_{n})$,
while $\zeta_{n}$ and $\eta_{n}$
can be considered as the variance and bias of this estimator.
Using results of martingale limit theory and the results of Steps 5 and 6,
it is shown that $\{\zeta_{n}\}_{n\geq 0}$ satisfy the Kushner-Clark noise condition.
Relying on the results of Step 7, it is also shown that the asymptotic magnitude of
$\{\eta_{n} \}_{n\geq 0}$ is inversely proportional to $N$ with a deterministic multiplicative
constant.
The details are provided in Lemma \ref{lemma3.2}.

{\em Step 9:}
Using the results presented in Section \ref{section1*} and
the results of \cite{tadic&doucet3}, \cite{tadic&doucet2}, Lemma \ref{lemma3.3} is proved.
Then, relying on the results obtained at Step 8 and the results of \cite{tadic&doucet21},
Theorem \ref{theorem2} is established.

\section{Example}\label{section2}

To illustrate the main results and their applicability,
we use them to study recursive maximum likelihood estimation for the following
non-linear state-space model:
\begin{align}
	&\label{2.1.a}
	X_{n+1}^{\theta,\lambda}
	=
	A_{\theta}(X_{n}^{\theta,\lambda} ) + B_{\theta}(X_{n}^{\theta,\lambda} ) V_{n},
	\\
	&\label{2.1.b}
	Y_{n}^{\theta,\lambda}
	=
	C_{\theta}(X_{n}^{\theta,\lambda} ) + D_{\theta}(X_{n}^{\theta,\lambda} ) W_{n},
	\;\;\;\;\;
	n\geq 0.
\end{align}
Here, $\theta\in\Theta$ and $\lambda\in{\cal P}({\cal X})$ 
are the parameters indexing the state-space model (\ref{2.1.a}), (\ref{2.1.b}). 
$A_{\theta}(x)$ and $B_{\theta}(x)$
are functions which map $\theta\in\Theta$, $x\in\mathbb{R}^{d_{x}}$ (respectively) to
$\mathbb{R}^{d_{x}}$ and $\mathbb{R}^{d_{x}\times d_{x}}$, 
while 
$C_{\theta}(x)$ and $D_{\theta}(x)$ map $\theta\in\Theta$, $x\in\mathbb{R}^{d_{x}}$ (respectively) to
$\mathbb{R}^{d_{y}}$ and $\mathbb{R}^{d_{y}\times d_{y}}$. 
$X_{0}^{\theta,\lambda}$ is an $\mathbb{R}^{d_{x}}$-valued random variable
defined on a probability space $(\Omega,{\cal F},P)$
and distributed according to $\lambda$.
$\{V_{n}\}_{n\geq 0}$ are $\mathbb{R}^{d_{x}}$-valued i.i.d. random variables distributed according the probability density $v(x)$ with respect to the Lebesgue measure while
$\{W_{n}\}_{n\geq 0}$ are $\mathbb{R}^{d_{y}}$-valued i.i.d. random variables distributed according the probability density $w(y)$ with respect to the Lebesgue measure. We also assume that $X_{0}^{\theta,\lambda}$,
$\{V_{n} \}_{n\geq 0}$ and $\{W_{n} \}_{n\geq 0}$ are (jointly) independent.

Let $p_{\theta}(x'|x)$ and $q_{\theta}(y|x)$
be the functions defined by
\begin{align}
	&\label{2.3.a}
	p_{\theta}(x'|x)
	=
	\frac{v\left(B_{\theta}^{-1}(x)(x'-A_{\theta}(x) ) \right) 1_{\cal X}(x') }
	{\int_{\cal X} v\left(B_{\theta}^{-1}(x)(x''-A_{\theta}(x) ) \right) dx'' },
	\\
	&\label{2.3.b}
	q_{\theta}(y|x)
	=
	\frac{w\left(D_{\theta}^{-1}(x)(y-C_{\theta}(x) ) \right) 1_{\cal Y}(y) }
	{\int_{\cal Y} w\left(D_{\theta}^{-1}(x)(y'-C_{\theta}(x) ) \right) dy' }
\end{align}
for $\theta\in\Theta$, $x,x'\in\mathbb{R}^{d_{x}}$, $y\in\mathbb{R}^{d_{y}}$,
where ${\cal X}\in{\cal B}(\mathbb{R}^{d_{x}} )$,
${\cal Y}\in{\cal B}(\mathbb{R}^{d_{y}} )$.
If ${\cal X}=\mathbb{R}^{d_{x}}$, ${\cal Y}=\mathbb{R}^{d_{y}}$,
then $p_{\theta}(x'|x)$ and $q_{\theta}(y|x)$
reduce to the conditional densities
of $X_{n+1}^{\theta,\lambda}$ and $Y_{n}^{\theta,\lambda}$ (respectively)
given $X_{n}^{\theta,\lambda}=x$.
When ${\cal X}\neq\mathbb{R}^{d_{x}}$, ${\cal Y}\neq\mathbb{R}^{d_{y}}$,
$p_{\theta}(x'|x)$ and $q_{\theta}(y|x)$ can be viewed as
a truncation of state-space model (\ref{2.1.a}), (\ref{2.1.b})
to domains ${\cal X}$ and ${\cal Y}$.
Due to the finite precision of digital computers,
this kind of truncation is involved (explicitly or implicitly) in the implementation of
any numerical approximation to the optimal filter
for state-space model (\ref{2.1.a}), (\ref{2.1.b}).
In \cite{heine&crisan}, a truncation scheme similar to (\ref{2.3.a}), (\ref{2.3.b}) 
has been theoretically analyzed
and the choice of the corresponding truncation domain has been addressed.
In the context of algorithm (\ref{1.1}) -- (\ref{1.5}),
the choice of domains ${\cal X}$ and ${\cal Y}$ is much more complex as it involves
many factors such as the stability, accuracy, convergence and convergence rate of
algorithm (\ref{1.1}) -- (\ref{1.5}), as well as
the stability and accuracy of the optimal filter for model (\ref{2.3.a}), (\ref{2.3.b}).
As such, the choice of ${\cal X}$ and ${\cal Y}$ is beyond the scope of this paper.

In this section, we rely on the following assumptions.

\begin{assumption}\label{b1}
${\cal X}$ and ${\cal Y}$ are compact sets with non-empty interiors.
\end{assumption}

\begin{assumption}\label{b2}
$v(x)>0$ and $w(y)>0$ for each $x\in\mathbb{R}^{d_{x}}$, $y\in\mathbb{R}^{d_{y}}$.
$B_{\theta}(x)$ and $D_{\theta}(x)$ are invertible for each $\theta\in\Theta$, $x\in\mathbb{R}^{d_{x}}$.
\end{assumption}

\begin{assumption}\label{b3}
$v(x)$ and $w(y)$ are differentiable for each $x\in\mathbb{R}^{d_{x}}$, $y\in\mathbb{R}^{d_{y}}$.
The first order derivatives of $v(x)$ and $w(y)$ are locally Lipschitz continuous on
$\mathbb{R}^{d_{x}}$, $\mathbb{R}^{d_{y}}$.
$A_{\theta}(x)$, $B_{\theta}(x)$, $C_{\theta}(x)$ and $D_{\theta}(x)$
are differentiable in $\theta$ for each $\theta\in\Theta$, $x\in\mathbb{R}^{d_{x}}$.
The first order derivatives in $\theta$ of
$A_{\theta}(x)$, $B_{\theta}(x)$, $C_{\theta}(x)$ and $D_{\theta}(x)$
are locally Lipschitz continuous in $(\theta,x)$ on $\Theta\times\mathbb{R}^{d_{x}}$.
\end{assumption}

\begin{assumption}\label{b4}
$v(x)$ and $w(y)$ are $p$-times differentiable for each $x\in\mathbb{R}^{d_{x}}$, $y\in\mathbb{R}^{d_{y}}$,
where $p>d$.
The $p$-th order derivatives of $v(x)$ and $w(y)$ are locally bounded on $\mathbb{R}^{d_{x}}$, $\mathbb{R}^{d_{y}}$.
$A_{\theta}(x)$, $B_{\theta}(x)$, $C_{\theta}(x)$ and $D_{\theta}(x)$
are $p$-times differentiable in $\theta$ for each $\theta\in\Theta$, $x\in\mathbb{R}^{d_{x}}$.
The $p$-th order derivatives in $\theta$ of
$A_{\theta}(x)$, $B_{\theta}(x)$, $C_{\theta}(x)$ and $D_{\theta}(x)$
are locally bounded in $(\theta,x)$ on $\Theta\times\mathbb{R}^{d_{x}}$.
\end{assumption}

\begin{assumption}\label{b5}
$v(x)$ and $w(y)$ are real-analytic for each $x\in\mathbb{R}^{d_{x}}$, $y\in\mathbb{R}^{d_{y}}$.
$A_{\theta}(x)$, $B_{\theta}(x)$, $C_{\theta}(x)$ and $D_{\theta}(x)$ are real-analytic
in $(\theta,x)$ for each $\theta\in\Theta$, $x\in\mathbb{R}^{d_{x}}$.
\end{assumption}

Regarding Assumptions \ref{a3} -- \ref{a6} and \ref{b1} -- \ref{b5},
the following relationships can be established.
Assumptions \ref{b1} -- \ref{b3} imply Assumptions \ref{a3} and \ref{a4},
while
Assumptions \ref{b4} and \ref{b5} are particular cases of Assumptions \ref{a5} and \ref{a6}
(respectively).
For the proof of these relationships, see
\cite[Corollary 4.1]{tadic&doucet3}, \cite[Corollary 4.1]{tadic&doucet2} (and the arguments used therein).
Assumptions \ref{b1} -- \ref{b5} are relevant for several practically important
classes of state-space models and cover, for example, stochastic volatility models, dynamic probit models
and their truncated versions.
For other models satisfying (\ref{2.1.a}), (\ref{2.1.b}) and Assumptions \ref{b1} -- \ref{b5},
see \cite{cappe&moulines&ryden}, \cite{crisan&rozovskii}, \cite{douc&moulines&stoffer}
and references cited therein.

As a direct consequence of the relationships between Assumptions \ref{a3} -- \ref{a6} and \ref{b1} -- \ref{b5},
we get the following corollary to Theorem \ref{theorem2}.

\begin{corollary}
(i) If Assumptions \ref{a1}, \ref{a2} and \ref{b1} -- \ref{b3} are satisfied,
then the conclusions of Part (i) of Theorem \ref{theorem2} hold.

(ii) If Assumptions \ref{a1}, \ref{a2} and \ref{b1} -- \ref{b4} are fulfilled,
then the conclusions of Part (ii) of Theorem \ref{theorem2} hold.

(iii) If Assumptions \ref{a1}, \ref{a2}, \ref{b1} -- \ref{b3} and \ref{b5} are satisfied,
then the conclusions of Part (iii) of Theorem \ref{theorem2} hold.
\end{corollary}

\section{Results Related to Optimal Filter and Log-Likelihood Rate}\label{section1*}

In this section, we study the stability and analytical properties of the optimal filter
and its derivative as well as some regularity properties of the log-likelihood rate.
The results presented here are a prerequisite for Lemmas \ref{lemma2.5} and \ref{lemma3.3}.
Note that we only consider here the results
which are essential for the proof of Theorem \ref{theorem2}
and not well-covered in the existing literature on optimal filtering.

Throughout this section and the whole paper, we use the following notation.
$Q$ stands for any compact set satisfying $Q\subset\Theta$.
${\cal M}_{s}({\cal X})$ is the collection of signed measures on ${\cal X}$,
while ${\cal M}_{s}^{d}({\cal X})$ is the set of $d$-dimensional vector measures on ${\cal X}$.
For $\xi\in{\cal M}_{s}({\cal X})$, $|\xi|(dx)$ and $\|\xi\|$
denote (respectively) the total variation and the total variation norm of $\xi$.
For $\zeta\in{\cal M}_{s}^{d}({\cal X})$,
$|\zeta|(dx)$ and $\|\zeta\|$
denote (respectively) the total variation and the total variation norm of $\zeta$
induced by $l_{1}$ vector norm.\footnote
{If $\zeta\in{\cal M}_{s}^{d}({\cal X})$,
then $|\zeta|(dx) = \sum_{i=1}^{d} |e_{i}^{T}\zeta|(dx)$
and $\|\zeta\| = \sum_{i=1}^{d} \|e_{i}^{T}\zeta\|$,
where $e_{i}$ is the $i$-th standard unit vector in $\mathbb{R}^{d}$.
}
$r_{\theta}(x'|y,x)$ is the function defined by
\begin{align*}
	r_{\theta}(x'|y,x)
	=
	p_{\theta}(x'|x)q_{\theta}(y|x)
\end{align*}
for $\theta\in\Theta$, $x,x'\in{\cal X}$, $y\in{\cal Y}$,
while $h_{\theta,y}(x|\xi,\zeta)$ and $H_{\theta,y}(\xi,\zeta)$
are defined for $\xi\in{\cal P}({\cal X})$, $\zeta\in{\cal M}_{s}^{d}({\cal X})$ as
\begin{align}
	&h_{\theta,y}(x|\xi,\zeta)
	=
	\frac{\int r_{\theta}(x|y,x')\zeta(dx') + \int \nabla_{\theta} r_{\theta}(x|y,x')\xi(dx') }
	{\int q_{\theta}(y|x')\xi(dx') },
	\nonumber\\
	\label{1.3*b}
	&H_{\theta,y}(\xi,\zeta)
	=
	\int h_{\theta,y}(x|\xi,\zeta) \mu(dx).
\end{align}
$r_{\theta,\boldsymbol y}^{m:n}(x'|x)$ is the function recursively defined by
\begin{align*}
	&r_{\theta,\boldsymbol y}^{m:m+1}(x'|x)
	=
	r_{\theta}(x'|y_{m},x),
	\\
	&r_{\theta,\boldsymbol y}^{m:n+1}(x'|x)
	=
	\int r_{\theta,\boldsymbol y}^{n:n+1}(x'|x'')
	r_{\theta,\boldsymbol y}^{m:n}(x''|x) \mu(dx'')
\end{align*}
for $n>m\geq 0$
and a sequence $\boldsymbol y = \{y_{n} \}_{n\geq 0}$ in ${\cal Y}$.
$h_{\theta,\boldsymbol y}^{m:n}(x|\xi,\zeta)$ and
$H_{\theta,\boldsymbol y}^{m:n}(\xi,\zeta)$
are the functions defined by
\begin{align*}
	&h_{\theta,\boldsymbol y}^{m:n}(x|\xi,\zeta)
	\!=\!
	\frac{\int r_{\theta,\boldsymbol y}^{m:n}(x|x') \zeta(dx')
	+ \int \nabla_{\theta} r_{\theta,\boldsymbol y}^{m:n}(x|x') \xi(dx')}
	{\iint r_{\theta,\boldsymbol y}^{m:n}(x''|x') \xi(dx')\mu(dx'') },
	\\
	&H_{\theta,\boldsymbol y}^{m:n}(\xi,\zeta)
	\!=\!
	\int h_{\theta,\boldsymbol y}^{m:n}(x|\xi,\zeta) \mu(dx),
\end{align*}
while
$f_{\theta,\boldsymbol y}^{m:n}(x|\xi)$ and $g_{\theta,\boldsymbol y}^{m:n}(x|\xi,\zeta)$
are defined as
\begin{align*}
	&
	f_{\theta,\boldsymbol y}^{m:n}(x|\xi)
	=
	\frac{\int r_{\theta,\boldsymbol y}^{m:n}(x|x') \xi(dx') }
	{\iint r_{\theta,\boldsymbol y}^{m:n}(x''|x') \xi(dx')\mu(dx'') },
	\\
	&
	g_{\theta,\boldsymbol y}^{m:n}(x|\xi,\zeta)
	=
	h_{\theta,\boldsymbol y}^{m:n}(x|\xi,\zeta)
	-
	f_{\theta,\boldsymbol y}^{m:n}(x|\xi)
	H_{\theta,\boldsymbol y}^{m:n}(\xi,\zeta).
\end{align*}
$F_{\theta,\boldsymbol y}^{m:m}(dx|\xi)$,
$F_{\theta,\boldsymbol y}^{m:n}(dx|\xi)$
and
$G_{\theta,\boldsymbol y}^{m:m}(dx|\xi,\zeta)$,
$G_{\theta,\boldsymbol y}^{m:n}(dx|\xi,\zeta)$
are the measures defined for $B\in{\cal B}({\cal X})$ by
$F_{\theta,\boldsymbol y}^{m:m}(B|\xi)=\xi(B)$, 
$G_{\theta,\boldsymbol y}^{m:m}(B|\xi,\zeta)=\zeta(B)$ and 
\begin{align}
	\label{1.401*}
	&
	F_{\theta,\boldsymbol y}^{m:n}(B|\xi)
	=
	\int_{B} f_{\theta,\boldsymbol y}^{m:n}(x|\xi) \mu(dx),
	\\
	\label{1.403*}
	&
	G_{\theta,\boldsymbol y}^{m:n}(B|\xi,\zeta)
	=
	\int_{B} g_{\theta,\boldsymbol y}^{m:n}(x|\xi,\zeta) \mu(dx).
\end{align}
Throughout this paper, the measures $F_{\theta,\boldsymbol y}^{m:n}(dx|\xi)$
and $G_{\theta,\boldsymbol y}^{m:n}(dx|\xi,\zeta)$
are also denoted by $F_{\theta,\boldsymbol y}^{m:n}(\xi)$
and $G_{\theta,\boldsymbol y}^{m:n}(\xi,\zeta)$ (short-hand notation).
Then, it is easy to show that
$F_{\theta,\boldsymbol y}^{m:n}(\xi)$ and $G_{\theta,\boldsymbol y}^{m:n}(\xi,\zeta)$
are the optimal (one-step) predictor and its gradient,
i.e.,
\begin{align*}
	&
	F_{\theta,\boldsymbol y}^{0:n}(B|\lambda)
	=
	P\left(\left. X_{n}^{\theta,\lambda}\in B \right|Y_{0:n-1}^{\theta,\lambda}=y_{0:n-1} \right),
	\\
	&
	G_{\theta,\boldsymbol y}^{0:n}(B|\lambda,\boldsymbol 0)
	=
	\nabla_{\theta} F_{\theta,\boldsymbol y}^{0:n}(B|\lambda)
\end{align*}
for each $\lambda\in{\cal P}({\cal X})$, $n\geq 1$.
Here, $\boldsymbol 0$ is the $d$-dimensional zero-measure
(i.e., $\boldsymbol 0\in{\cal M}_{s}^{d}({\cal X})$, $\|\boldsymbol 0\|=0$).

\begin{lemma}\label{lemma1.3}
Let Assumptions \ref{a2} -- \ref{a4} hold.
Then, the following is true:

(i) $l(\theta)$ is well-defined and differentiable on $\Theta$.

(ii) $\nabla l(\theta)$ is locally Lipschitz continuous on $\Theta$
and satisfies
\begin{align}\label{l1.3.1*}
	\nabla l(\theta)
	=
	\lim_{n\rightarrow\infty}
	E\left(
	H_{\theta,Y_{n} }\big( F_{\theta,\boldsymbol Y}^{0:n}(\xi),
	G_{\theta,\boldsymbol Y}^{0:n}(\xi,\zeta) \big)
	\right)
\end{align}
for all $\theta\in\Theta$, $\xi\in{\cal P}({\cal X})$, $\zeta\in{\cal M}_{s}^{d}({\cal X})$,
where $\boldsymbol Y = \{Y_{n} \}_{n\geq 0}$.

(iii) There exists a real number $C_{1,Q}\in[1,\infty)$
(depending only on $p_{\theta}(x'|x)$, $q_{\theta}(y|x)$)
such that
\begin{align*}
	\big\| G_{\theta,\boldsymbol y}^{0:n}(\xi,\zeta) \big\|
	\leq
	C_{1,Q} (1 + \|\zeta\| )
\end{align*}
for all $\theta\in Q$, $\xi\in{\cal P}({\cal X})$, $\zeta\in{\cal M}_{s}^{d}({\cal X})$,
$n\geq 0$ and any sequence $\boldsymbol y = \{y_{n} \}_{n\geq 0}$.
\end{lemma}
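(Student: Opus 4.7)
I would prove the three parts in the order (iii), (i), (ii), since the bound in (iii) is the technical ingredient needed for the convergence arguments in (i) and (ii).

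For Part (iii), the plan is to mirror the proof of Lemma \ref{lemma1.2}(i). The pair $(F_{\theta,\boldsymbol y}^{0:n}, G_{\theta,\boldsymbol y}^{0:n})$ satisfies a recursion of the same algebraic form as $(\tilde F_{\theta,\boldsymbol y}^{0:n}, \tilde G_{\theta,\boldsymbol y}^{0:n})$, the only difference being the replacement of $\tilde r$ by $r$ (compare (\ref{1.401*})--(\ref{1.403*}) with (\ref{1.9*})--(\ref{1.21*})). Under Assumptions \ref{a3} and \ref{a4} the mixing argument used to derive (\ref{l1.2.1*}) applies verbatim, yielding the uniform bound $\|G_{\theta,\boldsymbol y}^{0:n}(\xi,\zeta)\|\leq C_{4,Q}(1+\|\zeta\|)$. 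An equivalent route is to observe that $F^{0:n+1}$ is obtained from $\tilde F^{0:n}$ (with the initial condition adjusted to absorb the observation at time $0$) by one application of the transition operator associated with $p_\theta$; differentiating in $\theta$ expresses $G^{0:n+1}$ in terms of $\tilde G^{0:n}$ and $\nabla_\theta p_\theta$, and combining (\ref{l1.2.1*}) with the uniform bound on $\nabla_\theta p_\theta$ from Assumption \ref{a4} gives the result directly.

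For Part (i), I would decompose the log-likelihood into one-step predictive contributions, $\log q_\theta^n(y_{1:n}|\lambda)=\sum_{k=1}^n \log c_k^\theta(y_{1:k},\lambda)$, where $c_k^\theta$ is the conditional density of $y_k$ given $y_{1:k-1}$. A direct calculation expresses $c_k^\theta(y_{1:k},\lambda)$ in the form $\int q_\theta(y_k|x)\,F_{\theta,\boldsymbol y}^{0:k}(dx|\lambda')$, where $\lambda'$ is an initial measure absorbing the conditioning on $y_0$. The exponential filter-stability estimate (\ref{l1.2.3*}), extended to $F$ by the same argument as in Part (iii), implies that $E[\log c_k^\theta(Y_{1:k},\lambda)]$ is Cauchy as $k\to\infty$ and that its limit is independent of $\lambda$. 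Combining this with Assumption \ref{a2} (which propagates from $\{X_n\}$ to the joint process $\{(X_n,Y_n)\}$) identifies the limit as a deterministic function $\bar l(\theta)$, and Cesàro convergence yields $l_n(\theta,\lambda)\to \bar l(\theta)$ for every $\theta\in\Theta$, $\lambda\in\mathcal{P}(\mathcal{X})$. Thus $l(\theta)$ is well defined.

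For Part (ii), I would differentiate under the expectation to write $\nabla l_n(\theta,\lambda) = n^{-1}\sum_{k=1}^n E[\nabla_\theta \log c_k^\theta(Y_{1:k},\lambda)]$, the interchange being justified by Part (iii) and Assumptions \ref{a3}, \ref{a4}. The identity $\int r_\theta(x|y,x')\mu(dx)=q_\theta(y|x')$ together with (\ref{1.3*}), (\ref{1.301*}) gives the compact form
\begin{equation*}
H_{\theta,y}(\xi,\zeta)
=
\frac{\int q_\theta(y|x')\zeta(dx') + \int \nabla_\theta q_\theta(y|x')\xi(dx')}
{\int q_\theta(y|x')\xi(dx')},
\end{equation*}
which is precisely $\nabla_\theta \log\int q_\theta(y|x)\xi(dx)$ when $\zeta=\nabla_\theta\xi$. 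Hence each summand above equals $E\bigl[H_{\theta,Y_k}\bigl(F_{\theta,\boldsymbol Y}^{0:k}(\xi), G_{\theta,\boldsymbol Y}^{0:k}(\xi,\zeta)\bigr)\bigr]$ for a suitably reindexed pair $(\xi,\zeta)$. Using the initial-condition stability estimate for $G^{0:k}$ analogous to (\ref{l1.2.5*}) (again obtained by the same proof as in Lemma \ref{lemma1.2}(i)), these expectations converge as $k\to\infty$ to a limit independent of $(\xi,\zeta)$; Cesàro summation then identifies this limit with $\nabla l(\theta)$, giving formula (\ref{l1.3.1*}). Finally, local Lipschitz continuity of $\nabla l$ follows by bounding the Lipschitz constant of the right-hand side of (\ref{l1.3.1*}) using Lemma \ref{lemma1.1} (Lipschitz continuity of $\tilde H$, hence of $H$, in $\theta$) and Lemma \ref{lemma1.2}(ii) (Lipschitz continuity of $\tilde F^{m:n}$, $\tilde G^{m:n}$, with the $F$, $G$ counterparts following by the same argument).

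The main obstacle is justifying the passage to the limit $k\to\infty$ inside the Cesàro sum in Part (ii): this requires both the uniform-in-$n$ bound of Part (iii) and the geometric-rate initial-condition stability estimate for $G^{0:k}$ analogous to (\ref{l1.2.5*}). Once this predictor-variant of Lemma \ref{lemma1.2}(i) is in hand, all the remaining steps are routine applications of Lemmas \ref{lemma1.1} and \ref{lemma1.2} combined with the ergodicity provided by Assumption \ref{a2}.
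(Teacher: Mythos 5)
Your overall architecture is sound and, for Part (iii), essentially coincides with the paper's: there the predictor quantities are written as a one-step $p_{\theta}$-prediction of the filter quantities started from the Bayes-updated initial conditions $\tilde{\alpha}_{\theta,y_{0}}(\xi)$, $\tilde{\beta}_{\theta,y_{0}}(\xi,\zeta)$ (formulas (\ref{l1.3.1'}), (\ref{l1.3.1}), quoted from \cite{tadic&doucet4}), and then Lemma \ref{lemma1.2} is invoked together with the elementary bound $\|\tilde{\beta}_{\theta,y_{0}}(\xi,\zeta)\|\leq C(1+\|\zeta\|)$; your ``equivalent route'' is exactly this, and your alternative of rerunning the mixing argument directly for $r_{\theta}$ is also viable since $r_{\theta}$ satisfies the same two-sided bounds under Assumption \ref{a3}. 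Where you genuinely diverge is in Parts (i) and (ii): the paper does not re-derive anything there --- it imports the existence and differentiability of $l(\theta)$ from \cite[Theorem 3.1]{tadic&doucet2} and, crucially, the geometric bound (\ref{l1.3.7}) from \cite[Proposition 7.2]{tadic&doucet2}, which already asserts that the filter-based score statistic converges in conditional mean to $\nabla l(\theta)$; the proof of Lemma \ref{lemma1.3} is then bookkeeping that transfers this statement from filter to predictor coordinates. You instead propose a self-contained derivation via the one-step predictive decomposition of $\log q_{\theta}^{n}$ and exact differentiation, which is more transparent but shoulders the analytic burden the paper outsources.

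Two steps of your plan carry that burden and are under-argued. First, the convergence of $E\bigl(H_{\theta,Y_{k}}\bigl(F_{\theta,\boldsymbol Y}^{0:k}(\xi),G_{\theta,\boldsymbol Y}^{0:k}(\xi,\zeta)\bigr)\bigr)$ as $k\rightarrow\infty$ does not follow from the forgetting estimates (\ref{l1.2.3*}), (\ref{l1.2.5*}) alone, as your Part (ii) text suggests: forgetting only shows that the limit, if it exists, is independent of $(\xi,\zeta)$. Existence requires combining forgetting with the geometric ergodicity of Assumption \ref{a2} (approximate the statistic by a functional of a fixed window of recent observations and show the expectations form a Cauchy sequence); that combination is precisely the content of the imported estimate (\ref{l1.3.7}) and is the heart of the lemma, yet in your sketch it is compressed into a clause. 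Second, ``Ces\`{a}ro summation then identifies this limit with $\nabla l(\theta)$'' is circular as stated, because differentiability of $l$ is part of what must be proved: your Part (i) argument only yields well-definedness of $l(\theta)$. You need the convergence of $\nabla l_{n}(\theta,\lambda)$ to be uniform in $\theta$ on compact sets (which your geometric rates, being uniform over $Q$, do provide) and then the standard limit--derivative interchange to obtain differentiability and the identity (\ref{l1.3.1*}) simultaneously. Both points are fixable with the estimates you already have, but they should be spelled out; the concluding local Lipschitz step via Lemmas \ref{lemma1.1} and \ref{lemma1.2} matches the paper's argument.
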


Lemma \ref{lemma1.3} is proved in Appendix \ref{appendix2}.

\section{Results Related to Sequential Monte Carlo Approximations}\label{section2*}

In this section, we study the asymptotic properties of the particles
$\{\hat{X}_{n,i}: n\geq 0, 1\leq i\leq N \}$
and their weights $\{W_{n,i}: n\geq 0,1\leq i\leq N\}$.
Using these properties, we show that the Poisson equation
associated with algorithm (\ref{1.1}) -- (\ref{1.5})
has a Lipschitz continuous solution
(see Lemma \ref{lemma2.4}).
The results presented here are needed to analyze the error in the Monte Carlo estimation of $\nabla l(\theta)$
(see Lemma \ref{lemma3.2} and (\ref{3*.1})).

Throughout this section, we use the following notation.
${\cal V}$ and ${\cal Z}$ are the sets defined by
${\cal V} = {\cal Y} \times {\cal X} \times {\cal X}^{N}$
and ${\cal Z} = {\cal V} \times \mathbb{R}^{d\times N}$.
$e$ is the $N$-dimensional vector whose all elements are one
(i.e., $e=(1,\dots,1)^{T}\in\mathbb{R}^{N}$).
$I$ is the $N\times N$ unit matrix, while $\Lambda$ is the $N\times N$ matrix defined as
\begin{align}\label{2*.701}
	\Lambda
	=
	I - \frac{ee^{T}}{N}.
\end{align}
$A_{\theta}(v,v')$ and $B_{\theta}(v,v')$
are respectively $\mathbb{R}^{N\times N}$ and $\mathbb{R}^{d\times N}$-valued functions
defined by
\begin{align}
	&\label{2*.101.a}
	A_{\theta}^{i,j}(v,v')
	=
	\frac{r_{\theta}(x'_{j}|y,x_{i} ) }
	{\sum_{k=1}^{N} r_{\theta}(x'_{j}|y,x_{k} ) },
	\\
	&\label{2*.101.b}
	B_{\theta}^{j}(v,v')
	=
	\frac{\sum_{k=1}^{N} \nabla_{\theta} r_{\theta}(x'_{j}|y,x_{k} ) }
	{\sum_{k=1}^{N} r_{\theta}(x'_{j}|y,x_{k} ) }
\end{align}
for $\theta\in\Theta$, $x,x'\in{\cal X}$, $y,y'\in{\cal Y}$,
$\hat{x}=(x_{1},\dots,x_{N} )\in{\cal X}^{N}$,
$\hat{x}'=(x'_{1},\dots,x'_{N} )\in{\cal X}^{N}$,
$1\leq i,j\leq N$
and $v=(y,x,\hat{x})$, $v'=(y',x',\hat{x}')$,
where $A_{\theta}^{i,j}(v,v')$ and $B_{\theta}^{j}(v,v')$
are the $(i,j)$-entry of $A_{\theta}(v,v')$
and the $j$-th column of $B_{\theta}(v,v')$
(respectively).
$C_{\theta}(v)$ and $D_{\theta}(v)$ are respectively $\mathbb{R}^{N}$ and
$\mathbb{R}^{d}$-valued functions defined by
\begin{align}
	&\label{2*.101'.a}
	C_{\theta}^{i}(v)
	=
	\frac{q_{\theta}(y|x_{i}) }
	{\sum_{k=1}^{N} q_{\theta}(y|x_{k}) }
	-
	\frac{1}{N},
	\\
	&\label{2*.101'.b}
	D_{\theta}(v)
	=
	\frac{\sum_{k=1}^{N} \nabla_{\theta} q_{\theta}(y|x_{k}) }
	{\sum_{k=1}^{N} q_{\theta}(y|x_{k}) },
\end{align}
where $C_{\theta}^{i}(v)$ is the $i$-th element of $C_{\theta}(v)$.
$H(\theta,z)$ is the function defined by
\begin{align}\label{2*.401}
	H(\theta,z)
	=
	W C_{\theta}(v) + D_{\theta}(v)
\end{align}
for $v\in{\cal V}$, $W\in\mathbb{R}^{d\times N}$
and $z=(v,W)$.
Then, it is straightforward to verify
\begin{align}\label{2*.5}
	e^{T}A_{\theta}(v,v')=e^{T},
	\;\;\;\;\;
	e^{T}C_{\theta}(v)=0
\end{align}
for all $\theta\in\Theta$, $v,v'\in{\cal V}$,
where $A_{\theta}(v,v')$, $C_{\theta}(v)$ are defined in (\ref{2*.101.a}), (\ref{2*.101'.a}).

We rely here on the following notation, too.
$s_{\theta}(x|y,\hat{x})$ is the function defined by
\begin{align}\label{2*.103}
	s_{\theta}(x|y,\hat{x} )
	=
	\frac{\sum_{k=1}^{N} p_{\theta}(x|x_{k}) q_{\theta}(y|x_{k}) }
	{\sum_{k=1}^{N} q_{\theta}(y|x_{k}) }.
\end{align}
For $\hat{x}=(x_{1},\dots,x_{N} )\in{\cal X}^{N}$,
$S_{\theta}(d\hat{x}'|y,\hat{x})$ is the conditional probability measure on ${\cal X}^{N}$ defined for $B\in{\cal B}({\cal X}^{N})$ as
\begin{align*}
	S_{\theta}(B|y,\hat{x})
	=
	\int\cdots\int
	&
	I_{B}(x'_{1},\dots,x'_{N} )
	\left( \prod_{k=1}^{N} s_{\theta}(x'_{k}|y,\hat{x} ) \right)
	\\
	&\cdot 
	\mu(dx'_{1} )\cdots \mu(dx'_{N} ),
\end{align*}
where $I_{B}$ denotes the indicator of $B$.
$T_{\theta}(v,dv')$ is the kernel on ${\cal V}$ defined for $B\in{\cal B}({\cal V})$ and $v=(y,x,\hat{x})$
by
\begin{align}\label{2*.103'}
	T_{\theta}(v,B)
	=
	\iiint
	&
	I_{B}(y',x',\hat{x}') Q(x',dy') 
	\nonumber\\
	&\cdot 
	P(x,dx') S_{\theta}(d\hat{x}'|y,\hat{x}).
\end{align}
$\Pi_{\theta}(z,dz')$ is the kernel on ${\cal Z}$ defined for $B\in{\cal B}({\cal Z})$, $W\in\mathbb{R}^{d\times N}$ and $z=(v,W)$ as
\begin{align}\label{2*.103''}
	\Pi_{\theta}(z,B)
	=
	\int I_{B}\left(v', W A_{\theta}(v,v') + B_{\theta}(v,v') \right) T_{\theta}(v,dv').
\end{align}
Then, it is straightforward to verify that
$\{V_{n} \}_{n\geq 0}$ and
$\{Z_{n} \}_{n\geq 0}$ defined in (\ref{2*.403}) satisfy
\begin{align}
	&\label{2*.7'}
	P(V_{n+1}\in A|\theta_{0},V_{0},\dots,\theta_{n},V_{n} )
	=
	T_{\theta_{n} }(V_{n},A),
	\\
	&\label{2*.7}
	P(Z_{n+1}\in B|\theta_{0},Z_{0},\dots,\theta_{n},Z_{n} )
	=
	\Pi_{\theta_{n} }(Z_{n},B)
\end{align}
almost surely for each $A\in{\cal B}({\cal V})$, $B\in{\cal B}({\cal Z})$, $n\geq 0$.

Using functions $A_{\theta}(v,v')$, $B_{\theta}(v,v')$
and $S_{\theta}(d\hat{x}'|y,\hat{x})$,
we introduce the following notation.
$\{\hat{X}_{n,i}^{\theta}: n\geq 0,1\leq i\leq N \}$
are ${\cal X}$-valued random variables generated through the
sequential Monte Carlo scheme
\begin{align}\label{2*.21}
	\hat{X}_{n+1,i}^{\theta}
	\sim
	s_{\theta}\left(x\big|
	Y_{n}, \big( \hat{X}_{n,1}^{\theta},\dots,\hat{X}_{n,N}^{\theta} \big) \right) \mu(dx),
\end{align}
while $\hat{X}_{n}^{\theta}$, $V_{n}^{\theta}$
are the random variables defined by
\begin{align*}
	\hat{X}_{n}^{\theta}
	=
	\big(\hat{X}_{n,1}^{\theta}, \dots, \hat{X}_{n,N}^{\theta} \big),
	\;\;\;\;\;
	V_{n}^{\theta}
	=
	\big(Y_{n}, X_{n}, \hat{X}_{n}^{\theta} \big).
\end{align*}
In (\ref{2*.21}), $\{\hat{X}_{n+1,i}^{\theta}: 1\leq i\leq N\}$
are sampled independently from one another and independently of
$\{X_{k}: 0\leq k\leq n\}$, $\{Y_{k},\hat{X}_{k,i}^{\theta}: 0\leq k<n, 1\leq i\leq N \}$,
while $\{\hat{X}_{0,i}^{\theta}: 1\leq i\leq N\}$ are selected independently of $(X_{0},Y_{0})$.
$\{W_{n}^{\theta} \}_{n\geq 0}$ are $d\times N$ random matrices
generated by the recursion
\begin{align}\label{2*.23}
	W_{n+1}^{\theta}
	=
	W_{n}^{\theta} A_{\theta}(V_{n}^{\theta},V_{n+1}^{\theta} )
	+
	B_{\theta}(V_{n}^{\theta},V_{n+1}^{\theta} ),
\end{align}
while
$Z_{n}^{\theta}$ is the random variable
defined by $Z_{n}^{\theta} = (V_{n}^{\theta}, W_{n}^{\theta} )$.
In (\ref{2*.23}), $W_{0}^{\theta}$ is selected independently of $(X_{0},Y_{0})$.
Then, it can easily be shown that
$\{V_{n}^{\theta} \}_{n\geq 0}$ and $\{Z_{n}^{\theta} \}_{n\geq 0}$
are Markov chains whose transition kernels are
$T_{\theta}(v,dv')$ and $\Pi_{\theta}(z,dz')$
(respectively).

Using functions $A_{\theta}(v,v')$, $B_{\theta}(v,v')$, $C_{\theta}(v)$, $D_{\theta}(v)$
(defined in (\ref{2*.101.a}) -- (\ref{2*.101'.b})) and
stochastic process $\{V_{n}^{\theta} \}_{n\geq 0}$,
we introduce the following notation.
$T_{\theta}^{n}(v,dv')$ and $\tau_{\theta}(dv)$
are (respectively) the $n$-th step transition kernel and the invariant probability measure of
$\{V_{n}^{\theta} \}_{n\geq 0}$
(the existence and uniqueness of $\tau_{\theta}(dv)$ are guaranteed by Lemma \ref{lemma2.1}).
$\tilde{T}_{\theta}^{n}(v,dv')$ is the kernel on ${\cal V}$ defined for $B\in{\cal B}({\cal V})$ by
\begin{align}\label{2*.105}
	\tilde{T}_{\theta}^{n}(v,B)
	=
	T_{\theta}^{n}(v,B)
	-
	\tau_{\theta}(B).
\end{align}
$\tilde{A}_{\theta}^{0}(v)$ and $\Phi_{\theta}^{0}(v)$ are the functions defined
by
\begin{align}\label{2*.107'}
	\tilde{A}_{\theta}^{0}(v) = I,
	\;\;\;\;\;
	\Phi_{\theta}^{0}(v) = C_{\theta}(v).
\end{align}
$\tilde{A}_{\theta}^{n}(v_{0},\dots,v_{n})$
and $\Phi_{\theta}^{n}(v)$ are the functions defined for $v,v_{0},\dots,v_{n}\in{\cal V}$, $n\geq 1$ by
\begin{align}
	&\label{2*.107.a}
	\tilde{A}_{\theta}^{n}(v_{0},\dots,v_{n})
	=
	A_{\theta}(v_{0},v_{1})\cdots A_{\theta}(v_{n-1},v_{n}),
	\\
	&\label{2*.107.b}
	\Phi_{\theta}^{n}(v)
	=
	E\left(\left.
	\tilde{A}_{\theta}^{n}(V_{0}^{\theta},\dots,V_{n}^{\theta} )
	C_{\theta}(V_{n}^{\theta} )
	\right|V_{0}^{\theta}=v\right).
\end{align}
$\tilde{B}_{\theta}^{n}(v_{0},\dots,v_{n})$
and $\Psi_{\theta}^{n}(v)$ are the functions defined by
\begin{align}
	&\label{2*.107''.a}
	\tilde{B}_{\theta}^{n}(v_{0},\dots,v_{n})
	=
	B_{\theta}(v_{0},v_{1}) \tilde{A}_{\theta}^{n-1}(v_{1},\dots,v_{n}),
	\\
	&\label{2*.107''.b}
	\Psi_{\theta}^{n}(v)
	=
	E\left(\left.
	\tilde{B}_{\theta}^{n}(V_{0}^{\theta},\dots,V_{n}^{\theta} )
	C_{\theta}(V_{n}^{\theta} )
	\right|V_{0}^{\theta}=v\right).
\end{align}
$h(\theta)$ is the function defined by
\begin{align}\label{2*.405}
	h(\theta)
	=
	\int D_{\theta}(v)\tau_{\theta}(dv)
	+
	\sum_{n=1}^{\infty} \int \Psi_{\theta}^{n}(v)\tau_{\theta}(dv).
\end{align}
Then, for each $\theta\in\Theta$, $v\in{\cal V}$, $n\geq 1$,
it is straightforward to verify
\begin{align}\label{2*.33}
	\Psi_{\theta}^{n}(v)
	=
	E\left(\left.
	B_{\theta}(V_{0}^{\theta},V_{1}^{\theta})
	\Phi_{\theta}^{n}(V_{1}^{\theta})
	\right|V_{0}^{\theta}=v\right).
\end{align}

\begin{remark}
Throughout this and subsequent sections, the following convention is applied.
Diacritic $\tilde{}$ is used to denote a locally defined quantity,
i.e., a quantity whose definition holds only within the proof where
the quantity appears.
We also recall here that $Q$ stands for any compact set satisfying
$Q\subset\Theta$.
\end{remark}

\begin{lemma}\label{lemma2.1}
Let Assumptions \ref{a2} -- \ref{a4} hold.
Then, the following is true:

(i) $\{ V_{n}^{\theta} \}_{n\geq 0}$ is geometrically ergodic for each $\theta\in\Theta$.

(ii) There exist real numbers $\rho_{1,Q}\in(0,1)$, $C_{2,Q}\in[1,\infty)$ (possibly depending on $N$)
such that
\begin{align}
	&\label{l2.1.1*}
	|\tilde{T}_{\theta}^{n}(v,B) |
	\leq
	C_{2,Q}\rho_{1,Q}^{n},
	\\
	&\label{l2.1.3*}
	|\tilde{T}_{\theta}^{n}(v,B) - \tilde{T}_{\theta'}^{n}(v,B) |
	\leq
	C_{2,Q}\rho_{1,Q}^{n} \|\theta-\theta'\|,
	\\
	&\label{l2.1.5*}
	\begin{aligned}[b]
	&
	\max\{
	|\tau_{\theta}(B) - \tau_{\theta'}(B) |,
	|T_{\theta}(v,B) - T_{\theta'}(v,B) |
	\}
	\\
	&\leq
	C_{2,Q} \|\theta-\theta'\|
	\end{aligned}
\end{align}
for all $\theta,\theta'\in Q$,
$v\in{\cal V}$, $B\in{\cal B}({\cal V})$, $n\geq 0$.
\end{lemma}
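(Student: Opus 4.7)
The plan is to prove (i) and (\ref{l2.1.1*}) by establishing a uniform Doeblin-type minorization on an iterated kernel $T_\theta^{m}$, and then to derive (\ref{l2.1.5*}) and (\ref{l2.1.3*}) by combining the resulting uniform geometric contraction with the Lipschitz continuity of $T_\theta$ in $\theta$ furnished by Assumption \ref{a4}. Throughout, $Q\subset\Theta$ denotes an arbitrary compact set and constants are allowed to depend on $N$.

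First, by Assumption \ref{a3} and (\ref{2*.103}), one has $\varepsilon_Q^{3}\leq s_\theta(x|y,\hat x)\leq \varepsilon_Q^{-3}$ uniformly in $\theta\in Q$, $x\in{\cal X}$, $y\in{\cal Y}$ and $\hat x\in{\cal X}^{N}$, hence $S_\theta(\cdot|y,\hat x)\geq (\varepsilon_Q^{3}\|\mu\|)^{N}\bar\mu^{N}(\cdot)$ with $\bar\mu:=\mu/\|\mu\|$; the particle coordinate is therefore one-step Doeblin-minorized uniformly in $\theta\in Q$. For the $(X_n,Y_n)$ coordinate, Assumption \ref{a2} propagates to the joint chain: integrating $Q(x',dy')$ against $|P^{m}(x,\cdot)-\pi|$ gives convergence of the law of $(X_m,Y_m)$ given $X_0=x$ to $\pi(dx')Q(x',dy')$ at rate $K\rho^{m}$, so for $m$ large enough (depending only on $Q$) the $m$-step kernel of $(X_n,Y_n)$ admits a Doeblin minorization with constant $1/2$ against its stationary law. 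Since $\hat X_{k+1}^\theta$ is drawn from $(Y_k,\hat X_k^\theta)$ independently of $(X_{k+1},Y_{k+1})$, the two minorizations combine to $T_\theta^{m}(v,\cdot)\geq\delta_Q\nu(\cdot)$ for some $\delta_Q\in(0,1)$ and probability measure $\nu$ on ${\cal V}$ uniform in $\theta\in Q$, and standard Doeblin theory delivers (i), uniqueness of $\tau_\theta$, and (\ref{l2.1.1*}) with $\rho_{2,Q}=(1-\delta_Q)^{1/m}$.

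Next, a computation paralleling (\ref{l1.1.21}) and using Assumption \ref{a4} shows $s_\theta$ Lipschitz in $\theta$ uniformly over $Q$ and $(x,y,\hat x)$, whence $\|T_{\theta'}(v,\cdot)-T_{\theta''}(v,\cdot)\|\leq L_1\|\theta'-\theta''\|$ uniformly in $v$, establishing the $T$-part of (\ref{l2.1.5*}). For the $\tau$-part and for (\ref{l2.1.3*}) I would introduce the kernels $R_\theta(v,B):=T_\theta(v,B)-\tau_\theta(B)$ and $\Pi_\theta(v,B):=\tau_\theta(B)$, so that $T_\theta=\Pi_\theta+R_\theta$; the invariance $\tau_\theta T_\theta=\tau_\theta$ gives $R_\theta^{n}=\tilde T_\theta^{n}$, and since $T_{\theta'}$ and $T_{\theta''}$ preserve constants one has $(T_{\theta'}-T_{\theta''})\Pi_{\theta''}=0$. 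Applying this splitting to each factor of the telescope $T_{\theta'}^{n}-T_{\theta''}^{n}=\sum_{k=0}^{n-1}T_{\theta'}^{k}(T_{\theta'}-T_{\theta''})T_{\theta''}^{n-1-k}$ yields
\begin{align*}
T_{\theta'}^{n}-T_{\theta''}^{n}
=
\sum_{k=0}^{n-1}R_{\theta'}^{k}(T_{\theta'}-T_{\theta''})R_{\theta''}^{n-1-k}
+
\Pi_{\theta'}(T_{\theta'}-T_{\theta''})\sum_{j=0}^{n-1}R_{\theta''}^{j}.
\end{align*}

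The main obstacle is extracting the joint factor $\rho_{2,Q}^{n}\|\theta'-\theta''\|$ from this decomposition. The first sum is bounded in supremum norm by $\sum_{k}C_{5,Q}\rho_{2,Q}^{k}\cdot L_1\|\theta'-\theta''\|\cdot C_{5,Q}\rho_{2,Q}^{n-1-k}=O(n\rho_{2,Q}^{n-1}\|\theta'-\theta''\|)$, and the polynomial factor $n$ is absorbed by enlarging $\rho_{2,Q}$ to any $\tilde\rho\in(\rho_{2,Q},1)$ (with a correspondingly larger $C_{5,Q}$). The second summand is $v$-independent but $n$-dependent; since the left-hand side tends to $\tau_{\theta'}-\tau_{\theta''}$ and the first sum vanishes as $n\to\infty$, the second summand must also tend to $\tau_{\theta'}-\tau_{\theta''}$, so its deviation from this limit equals $-\Pi_{\theta'}(T_{\theta'}-T_{\theta''})\sum_{j\geq n}R_{\theta''}^{j}$, of size $O(\rho_{2,Q}^{n}\|\theta'-\theta''\|)$ by geometric decay of $R_{\theta''}^{j}$ and the Lipschitz bound on $T_\theta$. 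Because $\tilde T_{\theta'}^{n}-\tilde T_{\theta''}^{n}=(T_{\theta'}^{n}-T_{\theta''}^{n})-(\tau_{\theta'}-\tau_{\theta''})$, subtracting this limit cancels the non-decaying piece and delivers (\ref{l2.1.3*}); the same decomposition yields the uniform-in-$n$ bound $|T_{\theta'}^{n}(v,B)-T_{\theta''}^{n}(v,B)|\leq L_2\|\theta'-\theta''\|$, which inserted into $|\tau_{\theta'}(B)-\tau_{\theta''}(B)|\leq|T_{\theta'}^{n}(v,B)-T_{\theta''}^{n}(v,B)|+2C_{5,Q}\rho_{2,Q}^{n}$ and passing to $n\to\infty$ completes (\ref{l2.1.5*}).
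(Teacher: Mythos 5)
Your overall strategy is essentially the paper's: a Doeblin-type minorization of an iterated kernel $T_{\theta}^{m}$ (built from the uniform lower bound on $s_{\theta}$ for the particle block and a small-set condition for the state) to get (i) and (\ref{l2.1.1*}), Lipschitz continuity of $S_{\theta}$, hence of $T_{\theta}$, in $\theta$ from Assumptions \ref{a3}--\ref{a4}, and then a telescoping/perturbation expansion whose polynomial factor $n$ is absorbed by enlarging $\rho_{2,Q}$ to obtain (\ref{l2.1.3*}) and, via a triangle inequality and $n\rightarrow\infty$, the $\tau$-part of (\ref{l2.1.5*}). Your bookkeeping of the perturbation step differs slightly (you split $T_{\theta}=R_{\theta}+\Pi_{\theta}$ with $\Pi_{\theta}(v,\cdot)=\tau_{\theta}$ and identify the non-decaying piece by a limit argument, whereas the paper telescopes the centered kernels $\tilde{T}_{\theta}^{n}$ directly in (\ref{l2.1.23})); both routes work, modulo an immaterial off-by-one in the range of your second sum.

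There is, however, one genuine flaw: the justification of the minorization for the $(X_{n},Y_{n})$-component. From Assumption \ref{a2} you only have $\sup_{x}|P^{m}(x,B)-\pi(B)|\leq K\rho^{m}$, and you infer that for large $m$ the $m$-step kernel of $(X_{n},Y_{n})$ is minorized by $\tfrac{1}{2}$ times its stationary law. Uniform total-variation convergence does not imply a minorization against the stationary distribution: it only gives $P^{m}(x,B)\geq\pi(B)-K\rho^{m}$, which is vacuous for sets of small $\pi$-measure. Concretely, for the uniformly ergodic chain on $\mathbb{N}_{0}$ that moves from $n$ to $n+1$ or to $0$, each with probability $1/2$ (Doeblin's condition holds with minorizing measure $\delta_{0}$), one has $P^{m}(0,\{m+1\})=0<\tfrac{1}{2}\pi(\{m+1\})$ for every $m$, so no iterate is bounded below by a positive multiple of $\pi$. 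The correct statement --- and the one the paper invokes --- is the equivalence of uniform ergodicity with the Doeblin/small-set condition (\cite[Theorem 16.0.2]{meyn&tweedie}): there exist $n_{0}\geq 1$, $\gamma\in(0,1)$ and \emph{some} probability measure $\xi$ (not necessarily $\pi$) such that $P^{n_{0}}(x,\cdot)\geq\gamma\xi(\cdot)$ for all $x\in{\cal X}$. With that replacement, your combination of the two minorizations (the one-step bound $S_{\theta}(\cdot|y,\hat{x})\geq(\varepsilon_{Q}\mu({\cal X}))^{N}\bar{\mu}^{\otimes N}(\cdot)$ with $\bar{\mu}=\mu/\mu({\cal X})$ applied at the last step, and $P^{n_{0}}\geq\gamma\xi$ for the state, so that $T_{\theta}^{n_{0}}(v,\cdot)\geq\delta_{Q}\nu(\cdot)$ uniformly on $Q$) goes through exactly as you describe, and the remainder of your argument is sound.
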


\begin{proof}
Using Assumption \ref{a2} and \cite[Theorem 16.0.2]{meyn&tweedie},
we conclude that there exist an integer $n_{0}\geq 1$, a real number $\gamma\in(0,1)$
and a probability measure $\xi(dx)$ on ${\cal X}$ such that
$P^{n_{0}}(x,A)\geq \gamma\xi(A)$
for all $x\in{\cal X}$, $A\subseteq{\cal B}({\cal X})$.

Throughout the proof, the following notation is used.
$\theta$, $\theta'$ are any elements of $Q$.
$x,x_{1},\dots,x_{N}$ are any elements of ${\cal X}$,
while $\hat{x}=(x_{1},\dots,x_{N} )$.
$y$ is any element of ${\cal Y}$, while $v=(y,x,\hat{x} )$.
$B$ is any element of ${\cal B}({\cal V})$,
while $n$ is any non-negative integer.
$\zeta(d\hat{x})$ is the probability measure on ${\cal X}^{N}$ defined for $A\in{\cal B}({\cal X}^{N})$ by
\begin{align*}
	\zeta(A)
	\!=\!
	\left(\frac{1}{\mu({\cal X}) } \right)^{\!N}
	\!\!\int\!\cdots\!\int\! 
	I_{A}(x_{1},\dots,x_{N}) \mu(dx_{1})\cdots\mu(dx_{N}).
\end{align*}

Let $\beta_{Q}=(\varepsilon_{Q}\mu({\cal X}) )^{N}$
($\varepsilon_{Q}$ is specified in Assumption \ref{a3}, while $\mu(dx)$ is defined
in Subsection \ref{ssection1.1}).
Relying on Assumption \ref{a3}, we deduce
\begin{align}\label{l2.1.5}
	\varepsilon_{Q} \leq s_{\theta}(x|y,\hat{x}) \leq \frac{1}{\varepsilon_{Q} }.
\end{align}
Consequently, for $A\in{\cal B}({\cal X}^{N})$, we get
\begin{align*}
	S_{\theta}(A|y,\hat{x})
	\geq&
	\varepsilon_{Q}^{N}
	\int\cdots\int I_{A}(x'_{1},\dots,x'_{N}) \mu(dx'_{1})\cdots\mu(dx'_{N})
	\\
	=&
	\beta_{Q} \zeta(A).
\end{align*}	
Hence, we have
\begin{align*}
	T_{\theta}(v,B)
	=&
	\begin{aligned}[t]
	\iiint 
	&I_{B}(y',x',\hat{x}') Q(x',dy') 
	\\
	&\cdot
	P(x,dx') S_{\theta}(d\hat{x}'|y,\hat{x} )
	\end{aligned}
	\\
	\geq &
	\beta_{Q}
	\iiint I_{B}(y',x',\hat{x}')Q(x,dy')P(x,dx')\zeta(d\hat{x}').
\end{align*}
Therefore, we get
\begin{align*}
	&
	T_{\theta}^{n+1}(v,B)
	=
	E\left( T_{\theta}(V_{n}^{\theta},B) |V_{0}^{\theta}=v \right)
	\\
	&\geq 
	\begin{aligned}[t]
	\!\beta_{Q}
	E\Bigg(\!
	\iiint 
	&\!
	I_{B}(y',x',\hat{x}' ) Q(x',dy') 
	\\
	&\!\!\!\!\!\!\!
	\cdot 
	P(X_{n},dx') \zeta(d\hat{x}' )
	\Bigg|
	Y_{0}=y,X_{0}=x,\hat{X}_{0}^{\theta}=\hat{x}
	\Bigg)
	\end{aligned}
	\\
	&=
	\!\beta_{Q}
	\iiint I_{B}(y',x',\hat{x}' ) Q(x',dy') P^{n+1}(x,dx') \zeta(d\hat{x}' ).
\end{align*}
Since $P^{n_{0}}(x,A)\geq \gamma\xi(A)$
for any $A\subseteq{\cal B}({\cal X})$, we get
\begin{align}\label{l2.1.1}
	T_{\theta}^{n_{0}}(v,B)
	\geq
	\beta_{Q}\gamma
	\iiint I_{B}(y',x',\hat{x}' ) Q(x',dy') \xi(dx') \zeta(d\hat{x}' ).
\end{align}

Let $\rho_{1,Q}=(1-\beta_{Q}\gamma)^{1/(2n_{0})}$.
As $v$ is any element in ${\cal V}$, \cite[Theorem 16.0.2]{meyn&tweedie}) and (\ref{l2.1.1}) imply that
$\{V_{n}^{\theta} \}_{n\geq 0}$ is geometrically ergodic.
The same arguments also imply
\begin{align}\label{l2.1.3}
	|\tilde{T}_{\theta}^{n}(v,B) |
	=
	|T_{\theta}^{n}(v,B) - \tau_{\theta}(B) |
	\leq
	\rho_{1,Q}^{2n}.
\end{align}
Since $Q$ is any compact set in $\Theta$, we conclude that (i) is true.

Let $\tilde{C}_{1,Q}=3\varepsilon_{Q}^{-2}K_{1,Q}$,
$\tilde{C}_{2,Q}=\varepsilon_{Q}^{-N}\tilde{C}_{1,Q}N$,
$\tilde{C}_{3,Q}=(\mu({\cal X}) )^{N} \tilde{C}_{2,Q}$
($K_{1,Q}$ is specified in Assumption \ref{a4}).
Owing to Assumptions \ref{a3}, \ref{a4}, we have
\begin{align*}
	&
	|s_{\theta}(x|y,\hat{x} ) - s_{\theta'}(x|y,\hat{x} ) |
	\\
	&
	\begin{aligned}
	\leq &
	\frac{\sum_{i=1}^{N} |p_{\theta}(x|x_{i}) - p_{\theta'}(x|x_{i} ) | q_{\theta}(y|x_{i} ) }
	{\sum_{i=1}^{N} q_{\theta}(y|x_{i} ) }
	\\
	&+
	\frac{\sum_{i=1}^{N} p_{\theta'}(x|x_{i} ) |q_{\theta}(y|x_{i}) - q_{\theta'}(y|x_{i} ) |  }
	{\sum_{i=1}^{N} q_{\theta}(y|x_{i} ) }
	\\
	&+
	\frac{s_{\theta'}(x|y,\hat{x} ) \sum_{i=1}^{N} |q_{\theta}(y|x_{i}) - q_{\theta'}(y|x_{i} ) |  }
	{\sum_{i=1}^{N} q_{\theta}(y|x_{i} ) }
	\end{aligned}
	\\
	&\leq 
	\frac{3K_{1,Q}\|\theta-\theta'\|}{\varepsilon_{Q}^{2} }
	=
	\tilde{C}_{1,Q}\|\theta-\theta'\|.
\end{align*}
Consequently, for any $x'_{1},\dots,x'_{N}\in{\cal X}$, (\ref{l2.1.5}) yields
\begin{align*}
	&
	\left|
	\prod_{i=1}^{N} s_{\theta}(x'_{i}|y,\hat{x} )
	-
	\prod_{i=1}^{N} s_{\theta'}(x'_{i}|y,\hat{x} )
	\right|
	\\
	&
	\leq 
	\begin{aligned}[t]
	\sum_{i=1}^{N}
	&
	\left(\prod_{j=1}^{i-1} s_{\theta}(x'_{j}|y,\hat{x} ) \right)
	\left(\prod_{j=i+1}^{N} s_{\theta'}(x'_{j}|y,\hat{x} ) \right)
	\\
	&\cdot
	|s_{\theta}(x'_{i}|y,\hat{x} ) - s_{\theta'}(x'_{i}|y,\hat{x} ) |
	\end{aligned}
	\\
	&\leq 
	\frac{\tilde{C}_{1,Q}N\|\theta-\theta'\|}{\varepsilon_{Q}^{N-1} }
	\leq 
	\tilde{C}_{2,Q} \|\theta-\theta'\|.
\end{align*}
Here and throughout the paper, we use the convention that the product $\prod_{i=k}^{l}$ is one whenever $k>l$.
Hence, we have
\begin{align*}
	&
	|S_{\theta}(B|y,\hat{x} ) - S_{\theta'}(B|y,\hat{x} ) |
	\\
	&\leq 
	\begin{aligned}[t]
	\!\int\!\cdots\!\int
	&
	I_{B}(x_{1},\dots,x_{N})
	\left|
	\prod_{i=1}^{N} s_{\theta}(x_{i}|y,\hat{x} )
	\!-\!
	\prod_{i=1}^{N} s_{\theta'}(x_{i}|y,\hat{x} )
	\right|
	\\
	&
	\cdot
	\mu(dx_{1})\cdots\mu(dx_{N})
	\end{aligned}
	\\
	&\leq
	\tilde{C}_{2,Q} (\mu({\cal X}) )^{N} \|\theta-\theta'\|
	=
	\tilde{C}_{3,Q} \|\theta-\theta'\|.
\end{align*}
Therefore, we get
\begin{align}\label{l2.1.7}
	|T_{\theta}(v,B) - T_{\theta'}(v,B) |
	\leq &
	\begin{aligned}[t]
	\iiint
	&
	I_{B}(y',x',\hat{x}' ) Q(x',dy') 
	\\
	&\cdot 
	P(x,dx')|S_{\theta}\!-\!S_{\theta'}|(d\hat{x}'|y,\hat{x} )
	\end{aligned}
	\nonumber\\
	\leq&
	\tilde{C}_{3,Q} \|\theta-\theta'\|.
\end{align}
Here, $|S_{\theta}-S_{\theta'}|(d\hat{x}'|y,\hat{x} )$
denotes the total variation of the signed measure
$S_{\theta}(d\hat{x}'|y,\hat{x} ) - S_{\theta'}(d\hat{x}'|y,\hat{x} )$.

Let $\tilde{C}_{4,Q}\in[1,\infty)$ be an upper bound of
sequence $\{n\rho_{1,Q}^{n-1} \}_{n\geq 1}$,
while $C_{2,Q}=2\tilde{C}_{3,Q}\tilde{C}_{4,Q}(1-\rho_{1,Q})^{-1}$.
Using (\ref{l2.1.3}), (\ref{l2.1.7}), we conclude
\begin{align}\label{l2.1.21}
	&
	|T_{\theta}^{n+1}(v,B) - T_{\theta'}^{n+1}(v,B) |
	\nonumber\\
	&=
	\left|
	\sum_{i=0}^{n}
	\iint
	\tilde{T}_{\theta}^{i}(v'',B)
	(T_{\theta}-T_{\theta'} )(v',dv'')
	T_{\theta'}^{n-i}(v,dv')
	\right|
	\nonumber\\
	&\leq
	\sum_{i=0}^{n}
	\iint
	|\tilde{T}_{\theta}^{i}(v'',B) |
	|T_{\theta}-T_{\theta'} |(v',dv'')
	T_{\theta'}^{n-i}(v,dv')
	\nonumber\\
	&\leq
	\tilde{C}_{3,Q} \|\theta-\theta'\| \sum_{i=0}^{n} \rho_{1,Q}^{2i}
	\leq
	C_{2,Q}\|\theta-\theta'\|.
\end{align}
Similarly, we deduce
\begin{align}\label{l2.1.23}
	&
	|\tilde{T}_{\theta}^{n+1}(v,B) - \tilde{T}_{\theta'}^{n+1}(v,B) |
	\nonumber\\
	&=
	\left|
	\sum_{i=0}^{n}
	\iint
	\tilde{T}_{\theta}^{i}(v'',B)
	(T_{\theta}-T_{\theta'} )(v',dv'')
	\tilde{T}_{\theta'}^{n-i}(v,dv')
	\right|
	\nonumber\\
	&\leq
	\sum_{i=0}^{n}
	\iint
	|\tilde{T}_{\theta}^{i}(v'',B) |
	|T_{\theta}-T_{\theta'} |(v',dv'')
	|\tilde{T}_{\theta'}^{n-i} |(v,dv')
	\nonumber\\
	&\leq
	\tilde{C}_{3,Q} \rho_{1,Q}^{2n} (n+1) \|\theta-\theta'\|
	\leq
	C_{2,Q}\rho_{1,Q}^{n+1} \|\theta-\theta'\|.
\end{align}
Combining (\ref{l2.1.3}), (\ref{l2.1.21}), we get
\begin{align}\label{l2.1.25}
	|\tau_{\theta}(B)-\tau_{\theta'}(B)|
	\leq &
	|T_{\theta}^{n}(v,B) - T_{\theta'}^{n}(v,B) |
	+
	|\tilde{T}_{\theta}^{n}(v,B) |
	\nonumber\\
	&+
	|\tilde{T}_{\theta'}^{n}(v,B) |
	\nonumber\\
	\leq&
	C_{2,Q}\|\theta-\theta'\| + 2\rho_{1,Q}^{n}.
\end{align}
Letting $n\rightarrow\infty$ in (\ref{l2.1.25}) and
using (\ref{l2.1.3}), (\ref{l2.1.7}), (\ref{l2.1.23}),
we conclude that (\ref{l2.1.1*}) -- (\ref{l2.1.5*}) hold.
\end{proof}

\begin{lemma}\label{lemma2.2}
Let Assumptions \ref{a3} and \ref{a4} hold.
Then, the following is true:

(i) There exists a real number $\rho_{2,Q}\in(0,1)$
(independent of $N$ and depending only on $p_{\theta}(x'|x)$,
$q_{\theta}(y|x)$)
such that $A_{\theta}^{i,j}(v,v')\geq\rho_{2,Q}/N$
for all $\theta\in Q$, $v,v'\in{\cal V}$, $1\leq i,j\leq N$.

(ii) There exists a real number $C_{3,Q}\in[1,\infty)$ (possibly depending on $N$)
such that
\begin{align}
	&\label{l2.2.1*}
	\max\{\|A_{\theta}(v,v')\|, \|B_{\theta}(v,v')\|, \|C_{\theta}(v)\|, \|D_{\theta}(v)\| \}
	\leq
	C_{3,Q},
	\\
	&\label{l2.2.3*}
	\max\{\|A_{\theta}(v,v') - A_{\theta'}(v,v') \|, \|B_{\theta}(v,v') - B_{\theta'}(v,v') \| \}
	\nonumber\\
	&\leq
	C_{3,Q} \|\theta-\theta'\|,
	\\
	&\label{l2.2.5*}
	\max\{\|C_{\theta}(v) - C_{\theta'}(v) \|, \|D_{\theta}(v) - D_{\theta'}(v) \| \}
	\nonumber\\
	&\leq
	C_{3,Q} \|\theta-\theta'\|
\end{align}
for all $\theta,\theta'\in Q$, $v,v'\in{\cal V}$.
\end{lemma}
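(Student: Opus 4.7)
The proof is almost entirely a bookkeeping exercise built on the pointwise bounds provided by Assumption \ref{a3} and the Lipschitz/gradient bounds of Assumption \ref{a4}, applied to the explicit rational expressions defining $A_\theta,B_\theta,C_\theta,D_\theta$. The key preliminary observation, which will be used throughout, is that Assumption \ref{a3} gives $\varepsilon_Q^2\le r_\theta(x'|y,x)\le\varepsilon_Q^{-2}$, while Assumption \ref{a4} combined with Assumption \ref{a3} yields $\|\nabla_\theta r_\theta(x'|y,x)\|\le 2K_{1,Q}/\varepsilon_Q$ and the Lipschitz estimates
\begin{align*}
    |r_{\theta'}-r_{\theta''}|\le 2\varepsilon_Q^{-1}K_{1,Q}\|\theta'-\theta''\|,
    \qquad
    \|\nabla_\theta r_{\theta'}-\nabla_\theta r_{\theta''}\|\le 4\varepsilon_Q^{-1}K_{1,Q}^{2}\|\theta'-\theta''\|,
\end{align*}
exactly as derived in (\ref{l1.1.7}) and (\ref{l1.1.9}) of Lemma \ref{lemma1.1}. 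These four inequalities, together with the analogous ones for $q_\theta$ alone, are all that is needed.

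\textbf{Part (i).} This is immediate. For any $\theta\in Q$, $v=(y,x,\hat x)$, $v'=(y',x',\hat x')$ with $\hat x=(x_1,\dots,x_N)$, $\hat x'=(x_1',\dots,x_N')$, the numerator $r_\theta(x_j'|y,x_i)\ge\varepsilon_Q^2$ and the denominator $\sum_{k=1}^{N}r_\theta(x_j'|y,x_k)\le N\varepsilon_Q^{-2}$, so
\begin{align*}
    A_\theta^{i,j}(v,v')\ge \frac{\varepsilon_Q^{2}}{N\varepsilon_Q^{-2}}=\frac{\varepsilon_Q^{4}}{N}.
\end{align*}
I would set $\rho_{3,Q}=\varepsilon_Q^{4}$, which is manifestly independent of $N$ and depends only on $p_\theta$, $q_\theta$ through $\varepsilon_Q$.

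\textbf{Part (ii), boundedness.} For (\ref{l2.2.1*}) I would bound each object in turn. Each column of $A_\theta$ sums to one (see (\ref{2*.5})), giving $\|A_\theta(v,v')\|\le N$ in the $\ell_1$ sense used in the paper (or $1$ in the column-max norm; either way the bound is an explicit constant possibly depending on $N$). For $B_\theta$, the estimate $\|\nabla_\theta r_\theta\|\le 2K_{1,Q}/\varepsilon_Q$ together with the lower bound $\varepsilon_Q^{2}$ on $r_\theta$ yields $\|B_\theta^{j}\|\le 2K_{1,Q}/\varepsilon_Q^{3}$, whence $\|B_\theta\|$ is bounded by an explicit multiple of $N$. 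For $C_\theta$, I would use the identity $e^{T}C_\theta=0$ from (\ref{2*.5}) to see that $\|C_\theta\|$ is twice the sum of the positive parts of its entries, each of which is bounded by $1/(N\varepsilon_Q^{2})$, giving $\|C_\theta(v)\|\le 2\varepsilon_Q^{-2}$. Finally $\|D_\theta(v)\|\le K_{1,Q}/\varepsilon_Q$ directly. All of these bounds are absorbed into a single $C_{6,Q}$.

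\textbf{Part (ii), Lipschitz estimates.} For (\ref{l2.2.3*}) and (\ref{l2.2.5*}), I would apply the standard quotient identity $\frac{a_1}{b_1}-\frac{a_2}{b_2}=\frac{a_1-a_2}{b_1}+\frac{a_2}{b_1 b_2}(b_2-b_1)$ entry by entry. For $A_\theta^{i,j}$, the numerator difference contributes $|r_{\theta'}-r_{\theta''}|/(N\varepsilon_Q^{2})$ and the denominator difference contributes a similar term via the bound on $A_\theta^{i,j}$ just proven; both are controlled by the Lipschitz bound on $r_\theta$. The case of $B_\theta^{j}$ is handled in exactly the same way, now using in addition the Lipschitz bound on $\nabla_\theta r_\theta$. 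For $C_\theta^i$ and $D_\theta$ one repeats this argument with $q_\theta$ in place of $r_\theta$, invoking Assumption \ref{a4} directly. Collecting all constants produces a single $C_{6,Q}$ (possibly depending on $N$, as the statement allows).

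\textbf{Main obstacle.} There is no conceptual difficulty — every step is a quotient-rule computation of the type already carried out in Lemma \ref{lemma1.1}. The only care required is bookkeeping: making sure that each denominator is bounded below by a power of $\varepsilon_Q$ (times a factor of $N$), that each numerator and its gradient are uniformly bounded on $Q$, and that the resulting finite collection of constants is absorbed into $C_{6,Q}$. Since the lemma permits $C_{6,Q}$ to depend on $N$, one does not need to be careful about tracking the $N$-dependence, which eliminates what would otherwise be the subtle point.
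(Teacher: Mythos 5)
Your proposal is correct and follows essentially the same route as the paper: part (i) is the identical computation giving $\rho_{3,Q}=\varepsilon_Q^{4}$, and part (ii) is the same quotient-rule bookkeeping based on the bounds $\varepsilon_Q^{2}\le r_\theta\le\varepsilon_Q^{-2}$, $\|\nabla_\theta r_\theta\|\le 2K_{1,Q}/\varepsilon_Q$ and the Lipschitz estimates for $r_\theta$, $\nabla_\theta r_\theta$ and $q_\theta$, with all constants absorbed into an $N$-dependent $C_{6,Q}$. The only cosmetic difference is that the paper bounds $B_\theta^{j}$ via the convex-combination identity $B_\theta^{j}=\sum_i A_\theta^{i,j}\,\nabla_\theta r_\theta/r_\theta$ (yielding slightly sharper constants), whereas you bound numerator and denominator directly, which is equally valid here.
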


\begin{proof}
Throughout the proof, the following notation is used.
$\theta$, $\theta'$ are any elements of $Q$.
$x,x'$,
$x_{1},x'_{1},\dots,x_{N},x'_{N}$ are any elements of ${\cal X}$,
while $\hat{x}=(x_{1},\dots,x_{N} )$, $\hat{x}'=(x'_{1},\dots,x'_{N} )$.
$y$, $y'$ are any elements of ${\cal Y}$,
while $v=(y,x,\hat{x} )$, $v'=(y',x',\hat{x}' )$.
$i,j$ are any integers satisfying $1\leq i,j\leq N$.

Let $\rho_{2,Q}=\varepsilon_{Q}^{4}$
($\varepsilon_{Q}$ is specified in Assumption \ref{a3}).
Owing to Assumption \ref{a3}, we have
$\varepsilon_{Q}^{2}\leq r_{\theta}(x'|y,x)\leq 1/\varepsilon_{Q}^{2}$.
Therefore, we get
$A_{\theta}^{i,j}(v,v') \geq \varepsilon_{Q}^{4}/N = \rho_{2,Q}/N$.
Hence, (i) is true.

Due to Assumptions \ref{a3} and \ref{a4}, we have
\begin{align*}
	|r_{\theta}(x'|y,x) - r_{\theta'}(x'|y,x) |
	\leq &
	|p_{\theta}(x'|x) - p_{\theta'}(x'|x) | q_{\theta}(y|x)
	\\
	&+
	p_{\theta'}(x'|x) |q_{\theta}(y|x) - q_{\theta'}(y|x) |
	\\
	\leq &
	\frac{2K_{1,Q}\|\theta-\theta'\|}{\varepsilon_{Q} }.
\end{align*}
Then, we get
\begin{align}\label{l2.2.1}
	&
	|A_{\theta}^{i,j}(v,v') - A_{\theta'}^{i,j}(v,v') |
	\nonumber\\
	&
	\begin{aligned}
	\leq & 
	\frac{|r_{\theta}(x'_{j}|y,x_{i}) - r_{\theta'}(x'_{j}|y,x_{i}) | }
	{\sum_{k=1}^{N} r_{\theta}(x'_{j}|y,x_{k}) }
	\nonumber\\
	&+
	A_{\theta'}^{i,j}(v,v')
	\frac{\sum_{k=1}^{N} 	|r_{\theta}(x'_{j}|y,x_{k}) - r_{\theta'}(x'_{j}|y,x_{k}) | }
	{\sum_{k=1}^{N} r_{\theta}(x'_{j}|y,x_{k}) }
	\end{aligned} 
	\nonumber\\
	&\leq 
	\frac{2K_{1,Q}\|\theta-\theta'\|}{\varepsilon_{Q}^{3} }
	\left(\frac{1}{N} + A_{\theta'}^{i,j}(v,v') \right).
\end{align}
Since $\sum_{i=1}^{N} A_{\theta}^{i,j}(v,v')=1$
(due to (\ref{2*.5})), (\ref{l2.2.1}) implies
\begin{align}\label{l2.2.3}
	&
	\sum_{i=1}^{N}
	|A_{\theta}^{i,j}(v,v') - A_{\theta'}^{i,j}(v,v') |
	\nonumber\\
	&
	\leq
	\frac{2K_{1,Q}\|\theta-\theta'\|}{\varepsilon_{Q}^{3} }
	\left(1 + \sum_{i=1}^{N} A_{\theta'}^{i,j}(v,v') \right)
	\nonumber\\
	&=
	\frac{4K_{1,Q}\|\theta-\theta'\|}{\varepsilon_{Q}^{3} }.
\end{align}

It is straightforward to verify
\begin{align}\label{l2.2.101}
	B_{\theta}^{j}(v,v')
	=
	\sum_{i=1}^{N} A_{\theta}^{i,j}(v,v')
	\frac{\nabla_{\theta}r_{\theta}(x'_{j}|y,x_{i}) }{r_{\theta}(x'_{j}|y,x_{i}) }.
\end{align}
Moreover, using Assumptions \ref{a3} and \ref{a4}, we conclude
\begin{align}\label{l2.2.103}
	\left\|
	\frac{\nabla_{\theta}r_{\theta}(x'|y,x) }{r_{\theta}(x'|y,x) }
	\right\|
	\leq&
	\left\|
	\frac{\nabla_{\theta}p_{\theta}(x'|x) }{p_{\theta}(x'|x) }
	\right\|
	+
	\left\|
	\frac{\nabla_{\theta}q_{\theta}(y|x) }{q_{\theta}(y|x) }
	\right\|
	\nonumber\\
	\leq&
	\frac{2K_{1,Q} }{\varepsilon_{Q} }.
\end{align}
Relying on the same assumptions, we deduce
\begin{align}\label{l2.2.105}
	&
	\left\|
	\frac{\nabla_{\theta}r_{\theta}(x'|y,x) }{r_{\theta}(x'|y,x) }
	-
	\frac{\nabla_{\theta}r_{\theta'}(x'|y,x) }{r_{\theta'}(x'|y,x) }
	\right\|
	\nonumber\\
	&
	\begin{aligned}
	\leq &
	\frac{\|\nabla_{\theta}p_{\theta}(x'|x) \!-\! \nabla_{\theta}p_{\theta'}(x'|x) \| }
	{p_{\theta}(x'|x) }
	\\
	&+
	\left\|
	\frac{\nabla_{\theta}p_{\theta'}(x'|x) }{p_{\theta'}(x'|x) }
	\right\|
	\frac{|p_{\theta}(x'|x) \!-\! p_{\theta'}(x'|x) | }
	{p_{\theta}(x'|x) }
	\nonumber\\
	&+
	\frac{\|\nabla_{\theta}q_{\theta}(y|x) - \nabla_{\theta}q_{\theta'}(y|x) \| }
	{q_{\theta}(y|x) }
	\\
	&+
	\left\|
	\frac{\nabla_{\theta}q_{\theta'}(y|x) }{q_{\theta'}(y|x) }
	\right\|
	\frac{|q_{\theta}(y|x) - q_{\theta'}(y|x) | }
	{q_{\theta}(y|x) }
	\end{aligned} 
	\nonumber\\
	&\leq 
	\frac{4K_{1,Q}^{2}\|\theta-\theta'\| }{\varepsilon_{Q}^{2} }.
\end{align}
Then, (\ref{2*.5}), (\ref{l2.2.101}), (\ref{l2.2.103}) imply
\begin{align}\label{l2.2.5}
	\|B_{\theta}^{j}(v,v') \|
	\leq&
	\sum_{i=1}^{N}
	A_{\theta}^{i,j}(v,v')
	\left\|\frac{\nabla_{\theta}r_{\theta}(x'_{j}|y,x_{i}) }{r_{\theta}(x'_{j}|y,x_{i}) } \right\|
	\nonumber\\
	\leq&
	\frac{2K_{1,Q}}{\varepsilon_{Q} }.
\end{align}
Similarly, (\ref{2*.5}), (\ref{l2.2.3}) -- (\ref{l2.2.105}) yield
\begin{align}\label{l2.2.7}
	&
	\|B_{\theta}^{j}(v,v') - B_{\theta'}^{j}(v,v') \|
	\nonumber\\
	&
	\begin{aligned}
	\leq &
	\sum_{i=1}^{N}
	|A_{\theta}^{i,j}(v,v') - A_{\theta'}^{i,j}(v,v') |
	\left\|\frac{\nabla_{\theta}r_{\theta}(x'_{j}|y,x_{i} ) }{r_{\theta}(x'_{j}|y,x_{i}) } \right\|
	\nonumber\\
	&+
	\sum_{i=1}^{N}
	A_{\theta'}^{i,j}(v,v')
	\left\|\frac{\nabla_{\theta}r_{\theta}(x'_{j}|y,x_{i} ) }{r_{\theta}(x'_{j}|y,x_{i}) }
	-
	\frac{\nabla_{\theta}r_{\theta'}(x'_{j}|y,x_{i} ) }{r_{\theta'}(x'_{j}|y,x_{i}) }\right\|
	\end{aligned}
	\nonumber\\
	&\leq 
	\frac{12K_{1,Q}^{2}\|\theta-\theta'\|}{\varepsilon_{Q}^{4} }.
\end{align}

Due to Assumptions \ref{a3}, \ref{a4}, we have
\begin{align}
	&\label{l2.2.9.a}
	|C_{\theta}^{i}(v)|
	\leq
	\max\left\{
	\frac{1}{N},
	\frac{q_{\theta}(y|x_{i}) }{\sum_{k=1}^{N} q_{\theta}(y|x_{k} ) }
	\right\}
	\leq
	1,
	\\
	&\label{l2.2.9.b}
	\|D_{\theta}(v)\|
	\leq
	\frac{\sum_{k=1}^{N} \|\nabla_{\theta}q_{\theta}(y|x_{k}) \| }
	{\sum_{k=1}^{N} q_{\theta}(y|x_{k}) }
	\leq
	\frac{K_{1,Q} }{\varepsilon_{Q} }.
\end{align}
Combining Assumptions \ref{a3}, \ref{a4} and (\ref{l2.2.9.a}), we get
\begin{align}\label{l2.2.21}
	|C_{\theta}^{i}(v) - C_{\theta'}^{i}(v) |
	\leq &
	\frac{|q_{\theta}(y|x_{i}) - q_{\theta'}(y|x_{i}) |}
	{\sum_{k=1}^{N} q_{\theta}(y|x_{k}) }
	\nonumber\\
	&+
	\frac{|C_{\theta'}^{i}(v)| \sum_{k=1}^{N} |q_{\theta}(y|x_{k}) - q_{\theta'}(y|x_{k}) | }
	{\sum_{k=1}^{N} q_{\theta}(y|x_{k}) }
	\nonumber\\
	\leq &
	\frac{2K_{1,Q}\|\theta-\theta'\|}{\varepsilon_{Q} }.
\end{align}
Moreover, using Assumptions \ref{a3}, \ref{a4} and (\ref{l2.2.9.b}), we get
\begin{align}\label{l2.2.23}
	\|D_{\theta}(v) \!-\! D_{\theta'}(v) \|
	\leq &
	\frac{\sum_{k=1}^{N}
	\|\nabla_{\theta}q_{\theta}(y|x_{k}) - \nabla_{\theta}q_{\theta'}(y|x_{k}) \| }
	{\sum_{k=1}^{N} q_{\theta}(y|x_{k}) }
	\nonumber\\
	&+\!
	\frac{\|D_{\theta'}(v)\|
	\sum_{k=1}^{N}\! |q_{\theta}(y|x_{k}) \!-\! q_{\theta'}(y|x_{k}) | }
	{\sum_{k=1}^{N} q_{\theta}(y|x_{k}) }
	\nonumber\\
	\leq &
	\frac{2K_{1,Q}^{2}\|\theta-\theta'\|}{\varepsilon_{Q}^{2} }.
\end{align}

Let $C_{3,Q}=12\varepsilon_{Q}^{-4}K_{1,Q}^{2}N$.
Then, relying on (\ref{2*.5}), (\ref{l2.2.3}) -- (\ref{l2.2.23}),
we deduce that (\ref{l2.2.1*}) -- (\ref{l2.2.5*}) hold.
Hence, (ii) is true.
\end{proof}

\begin{lemma}\label{lemma2.3}
Let Assumptions \ref{a3} and \ref{a4} hold.
Then, there exist real numbers $\rho_{3,Q}\in (0,1)$, $C_{4,Q}\in[1,\infty)$
(possibly depending on $N$) such that
\begin{align*}
	&
	\max\{\|\Phi_{\theta}^{n}(v)\|, \|\Psi_{\theta}^{n}(v)\| \}
	\leq
	C_{4,Q}\rho_{3,Q}^{n},
	\\
	&
	\max\{\|\Phi_{\theta}^{n}(v) - \Phi_{\theta'}^{n}(v) \|,
	\|\Psi_{\theta}^{n}(v) - \Phi_{\theta'}^{n}(v) \| \}
	\\
	&\leq
	C_{4,Q} \rho_{3,Q}^{n} \|\theta-\theta' \|
\end{align*}
for all $\theta,\theta'\in Q$, $v\in{\cal V}$, $n\geq 1$.
\end{lemma}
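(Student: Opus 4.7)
The plan is to combine two ingredients. First, the Doeblin-type minorization $A_\theta^{i,j}(v,v')\geq\rho_{3,Q}/N$ from Lemma \ref{lemma2.2}(i) together with the identities $e^{T}A_{\theta}(v,v')=e^{T}$ and $e^{T}C_{\theta}(v)=0$ from (\ref{2*.5}). Writing
$$A_{\theta}(v,v') = \frac{\rho_{3,Q}}{N}\,ee^{T} + (1-\rho_{3,Q})\,\bar{A}_{\theta}(v,v'),$$
where $\bar{A}_{\theta}(v,v')$ is column-stochastic with non-negative entries, one sees that on any mean-zero vector $w\in\mathbb{R}^{N}$ (i.e., $e^{T}w=0$) the term $ee^{T}w$ vanishes, leaving $A_{\theta}(v,v')w=(1-\rho_{3,Q})\bar{A}_{\theta}(v,v')w$. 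Since a column-stochastic matrix is a contraction in the $\ell_{1}$-norm, this shows $\|A_{\theta}(v,v')w\|_{1}\leq(1-\rho_{3,Q})\|w\|_{1}$ whenever $e^{T}w=0$. Because $e^{T}A_{\theta}=e^{T}$ propagates the mean-zero property through every successive multiplication, iteration yields $\|\tilde{A}_{\theta}^{n}(v_{0},\ldots,v_{n})C_{\theta}(v_{n})\|_{1}\leq N C_{6,Q}(1-\rho_{3,Q})^{n}$. Taking the conditional expectation in (\ref{2*.107}) gives the first bound on $\|\Phi_{\theta}^{n}(v)\|$, and combining this with $\|B_{\theta}\|\leq C_{6,Q}$ and the identity (\ref{2*.33}) gives the corresponding bound on $\|\Psi_{\theta}^{n}(v)\|$.

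For the Lipschitz estimates I would set up two nested telescopings, reflecting the two distinct sources of $\theta$-dependence of $\Phi_{\theta}^{n}(v)$: the pointwise integrand ($A_{\theta}$ and $C_{\theta}$), and the law of the Markov chain $\{V_{n}^{\theta}\}$ (via the kernel $T_{\theta}$). For fixed $v_{0},\ldots,v_{n}$ define $w_{k}:=\tilde{A}_{\theta'}^{k:n}C_{\theta'}(v_{n})-\tilde{A}_{\theta''}^{k:n}C_{\theta''}(v_{n})$. Both summands are mean-zero, so $w_{k}$ is mean-zero and the contraction from the first step applies to $A_{\theta'}w_{k+1}$. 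The obvious recursion yields
$$\|w_{k}\|_{1}\leq(1-\rho_{3,Q})\|w_{k+1}\|_{1}+\tilde{C}_{1}\|\theta'-\theta''\|(1-\rho_{3,Q})^{n-k-1},$$
where $\tilde{C}_{1}$ absorbs the Lipschitz constants from (\ref{l2.2.3*}) and (\ref{l2.2.5*}); unrolling gives $\|w_{0}\|_{1}\leq n\tilde{C}_{2}(1-\rho_{3,Q})^{n-1}\|\theta'-\theta''\|$. To handle the path-law dependence, I would express $\Phi_{\theta}^{n}(v)$ as an iterated integral against $T_{\theta}$ and telescope once more over the $n$ copies of the transition kernel, invoking the Lipschitz bound (\ref{l2.1.5*}) together with the pointwise bound $\|\tilde{A}_{\theta''}^{0:n}C_{\theta''}(v_{n})\|_{1}\leq NC_{6,Q}(1-\rho_{3,Q})^{n}$ established in the first step to control each telescoped term.

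Adding the two contributions gives $\|\Phi_{\theta'}^{n}(v)-\Phi_{\theta''}^{n}(v)\|\leq\tilde{C}_{3}(n+1)(1-\rho_{3,Q})^{n-1}\|\theta'-\theta''\|$, and choosing any $\rho_{4,Q}\in(1-\rho_{3,Q},1)$ together with a sufficiently large $C_{7,Q}$ absorbs the polynomial prefactor into the geometric rate, producing the stated Lipschitz bound on $\Phi_{\theta}^{n}$. The analogous bound on $\Psi_{\theta}^{n}$ then follows by combining (\ref{2*.33}) with the estimate just obtained for $\Phi_{\theta}^{n-1}$, the Lipschitz bound on $B_{\theta}$ from (\ref{l2.2.3*}), the norm bound from Step~1, and the Lipschitz bound (\ref{l2.1.5*}) used to switch the conditional expectation from $T_{\theta'}$ to $T_{\theta''}$.

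The principal difficulty, and the reason this is more than a one-line contraction argument, is the simultaneous $\theta$-dependence of the chain's law (through $T_{\theta}$) and of the matrix/vector integrands (through $A_{\theta}$, $B_{\theta}$, $C_{\theta}$). The telescoping must be arranged so that every matrix product still acts on a mean-zero vector at each stage; otherwise the Doeblin contraction is unavailable and the Lipschitz bound fails to decay geometrically in $n$. Verifying that the mean-zero property is preserved throughout the recursion for $w_{k}$ is the key non-routine step, and is what makes the bound tight.
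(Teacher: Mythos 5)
Your proposal is correct and follows essentially the same route as the paper's proof: a geometric contraction of the matrix products on mean-zero vectors via the minorization $A_{\theta}^{i,j}\geq\rho_{3,Q}/N$, pathwise Lipschitz estimates with a linear-in-$n$ prefactor, a separate Lipschitz estimate for the path law of $\{V_{n}^{\theta}\}$ obtained by telescoping the kernel $T_{\theta}$ (the paper phrases this as an induction on the measures $U_{\theta}^{n}$ using Lemma \ref{lemma2.1}), and finally absorption of the polynomial factor into a slightly larger geometric rate $\rho_{4,Q}$. The only cosmetic difference is that you re-derive the contraction directly from the Doeblin splitting $A_{\theta}=\tfrac{\rho_{3,Q}}{N}ee^{T}+(1-\rho_{3,Q})\bar{A}_{\theta}$ and an explicit recursion for $w_{k}$, whereas the paper delegates these matrix-product bounds to the Dobrushin-coefficient results of Lemmas \ref{lemmaa2} and \ref{lemmaa1} in the Appendix; the two are equivalent.
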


\begin{proof}
Throughout the proof, the following notation is used.
$\theta$, $\theta'$ are any elements of $Q$.
$v$ is any element of ${\cal V}$,
while $\{v_{n} \}_{n\geq 0}$ is any sequence in ${\cal V}$.
$n$ is any positive integer.

Let $\rho_{3,Q}=(1-\rho_{2,Q} )^{1/2}$,
$\tilde{C}_{1,Q}=4\rho_{3,Q}^{-2}N$, $\tilde{C}_{2,Q}=2\tilde{C}_{1,Q}C_{3,Q}^{2}$,
$\tilde{C}_{3,Q}=\rho_{3,Q}^{-2}\tilde{C}_{2,Q}C_{3,Q}$
($\rho_{2,Q}$, $C_{3,Q}$ are specified in Lemma \ref{lemma2.2}).
Owing to Lemmas \ref{lemma2.2}, \ref{lemmaa1} (see Appendix)
and (\ref{2*.5}),
we have
\begin{align}\label{l2.3.1}
 \left\|
 \tilde{A}_{\theta}^{n}(v_{0},\dots,v_{n}) C_{\theta}(v_{n})
 \right\|
 \leq&
 \tilde{C}_{1,Q} \rho_{3,Q}^{2n} \|C_{\theta}(v_{n}) \|
	\nonumber\\
 \leq&
 \tilde{C}_{1,Q} C_{3,Q} \rho_{3,Q}^{2n}
\nonumber\\
 \leq&
 \tilde{C}_{2,Q} \rho_{3,Q}^{2n}.
\end{align}
Since $A_{\theta}^{0}(v)=I$ (due to (\ref{2*.107'})),
Lemma \ref{lemma2.2} and (\ref{l2.3.1}) imply
\begin{align}\label{l2.3.3}
	\left\|
	\tilde{B}_{\theta}^{n}(v_{0},\dots,v_{n}) C_{\theta}(v_{n} )
	\right\|
	\leq&
	\left\|
	\tilde{A}_{\theta}^{n-1}(v_{1},\dots,v_{n}) C_{\theta}(v_{n})
	\right\|
	\nonumber\\
	&\cdot 
	\|B_{\theta}(v_{0},v_{1}) \|
	\nonumber\\
 \leq&
 \tilde{C}_{2,Q}C_{3,Q}\rho_{3,Q}^{2(n-1)}
	\nonumber\\
	=&
 \tilde{C}_{3,Q} \rho_{3,Q}^{2n}.
\end{align}
Moreover, due to Lemmas \ref{lemma2.2}, \ref{lemmaa1} (see Appendix \ref{appendix1}),
we have
\begin{align}\label{l2.3.5}
	&
	\left\|
	\tilde{A}_{\theta}^{n}(v_{0},\dots,v_{n}) C_{\theta}(v_{n})
	-
	\tilde{A}_{\theta'}^{n}(v_{0},\dots,v_{n}) C_{\theta'}(v_{n})
	\right\|
	\nonumber\\
	&
	\leq
	\begin{aligned}[t]
	&
	\tilde{C}_{1,Q} \rho_{3,Q}^{2n}
	\left( \|C_{\theta}(v_{n}) \| + \|C_{\theta'}(v_{n}) \| \right)
	\\
	&\cdot 
	\sum_{k=0}^{n-1}
	\|A_{\theta}(v_{k},v_{k+1}) - A_{\theta'}(v_{k},v_{k+1}) \|
	\\
	&
	+
	\tilde{C}_{1,Q}\rho_{3,Q}^{2n} \|C_{\theta}(v_{n}) - C_{\theta'}(v_{n}) \|
	\end{aligned}
	\nonumber\\
	&
	\leq
	2\tilde{C}_{1,Q} C_{3,Q}^{2} \rho_{3,Q}^{2n}(n+1) \|\theta-\theta'\|
	\nonumber\\
	&
	=
	\tilde{C}_{2,Q} \rho_{3,Q}^{2n}(n+1) \|\theta-\theta'\|.
\end{align}
Combining this with Lemma \ref{lemma2.2} and (\ref{l2.3.1}), we get
\begin{align}\label{l2.3.7}
	&
	\left\|
	\tilde{B}_{\theta}^{n}(v_{0},\dots,v_{n}) C_{\theta}(v_{n})
	-
	\tilde{B}_{\theta'}^{n}(v_{0},\dots,v_{n}) C_{\theta'}(v_{n})
	\right\|
	\nonumber\\
	&
	\leq
	\begin{aligned}[t]
	&
	\|B_{\theta}(v_{0},v_{1}) - B_{\theta'}(v_{0},v_{1})\|
	\left\|
	\tilde{A}_{\theta}^{n-1}(v_{1},\dots,v_{n}) C_{\theta}(v_{n})
	\right\|
	\\
	&+
	\!\left\|
	\tilde{A}_{\theta}^{n-1}(v_{1},\dots,v_{n}) C_{\theta}(v_{n})
	\!-\!
	\tilde{A}_{\theta'}^{n-1}(v_{1},\dots,v_{n}) C_{\theta'}(v_{n})
	\right\|
	\\
	&\cdot 
	\|B_{\theta'}(v_{0},v_{1})\|
	\end{aligned}
	\nonumber\\
	&
	\leq
	\tilde{C}_{2,Q}C_{3,Q}\rho_{3,Q}^{2(n-1)}(n+1)\|\theta-\theta'\|
	\nonumber\\
	&
	=
	\tilde{C}_{3,Q} \rho_{3,Q}^{2n}(n+1) \|\theta-\theta'\|.
\end{align}

Let $U_{\theta}^{n}(dv_{1},\dots,dv_{n}|v)$ be the conditional probability measure defined for 
$B\in{\cal B}({\cal V}^{n} )$ by
\begin{align*}
	U_{\theta}^{n}(B|v)
	=
	E\left(\left.I_{B}(V_{1}^{\theta},\dots,V_{n}^{\theta}) \right|V_{0}^{\theta}=v \right).
\end{align*}
Moreover, let $u_{\theta,\theta'}^{n}(v)$ be the function defined by
\begin{align*}
	u_{\theta,\theta'}^{n}(v)
	=
	\sup_{B\in{\cal B}({\cal V}^{n} ) }
	\left| U_{\theta}^{n}(B|v) - U_{\theta'}^{n}(B|v) \right|.
\end{align*}
Then, for $B\in{\cal B}({\cal V}^{n+1})$, we have
\begin{align*}
	U_{\theta}^{n+1}(B|v)
	\!=\!
	\iint\dots\int\!
	&
	I_{B}(v_{1},\dots,v_{n},v_{n+1})
	T_{\theta}(v_{n},dv_{n+1})
	\\
	&\cdot 
	U_{\theta}^{n}(dv_{1},\dots,dv_{n}|v).
\end{align*}
Consequently, Lemma \ref{lemma2.1} implies
\begin{align*}
	&
	\left|
	U_{\theta}^{n+1}(B|v) - U_{\theta'}^{n+1}(B|v)
	\right|
	\\
	&
	\begin{aligned}
	\leq &
	\begin{aligned}[t]
	\iint\cdots\int
	&
	I_{B}(v_{1},\dots,v_{n},v_{n+1})
	|T_{\theta}-T_{\theta'}|(v_{n},dv_{n+1})
	\\
	&\cdot 
	U_{\theta}^{n}(dv_{1},\dots,dv_{n}|v)
	\end{aligned}
	\\
	&
	+
	\begin{aligned}[t]
	\iint\cdots\int
	&
	I_{B}(v_{1},\dots,v_{n},v_{n+1})
	T_{\theta'}(v_{n},dv_{n+1})
	\\
	&\cdot 
	|U_{\theta}^{n}-U_{\theta'}^{n}|(dv_{1},\dots,dv_{n}|v)
	\end{aligned}
	\end{aligned}
	\\
	&\leq 
	C_{2,Q}\|\theta-\theta'\| + u_{\theta,\theta'}^{n}(v)
\end{align*}
($C_{2,Q}$ is specified in Lemma \ref{lemma2.1}).
For $B\in{\cal B}({\cal V})$,
Lemma \ref{lemma2.1} also yields
\begin{align*}
	\left| U_{\theta}^{1}(B|v) - U_{\theta'}^{1}(B|v) \right|
	=&
	|T_{\theta}(v,B) - T_{\theta'}(v,B) |
	\nonumber\\
	\leq&
	C_{2,Q} \|\theta-\theta'\|.
\end{align*}
Hence, we have
\begin{align}
	&
	u_{\theta,\theta'}^{1}(v)
	\leq
	C_{2,Q} \|\theta-\theta'\|,
	\nonumber\\
	&\label{l2.3.901}
	u_{\theta,\theta'}^{n+1}(v)
	\leq
	u_{\theta,\theta'}^{n}(v)
	+
	C_{2,Q} \|\theta-\theta'\|.
\end{align}
Then, iterating (\ref{l2.3.901}), we conclude
\begin{align}\label{l2.3.31}
	u_{\theta,\theta'}^{n}(v)
	\leq
	C_{2,Q}n\|\theta-\theta'\|.
\end{align}

Let $\tilde{C}_{4,Q}\in[1,\infty)$ be an upper bound of
sequence $\{n\rho_{3,Q}^{n-1} \}_{n\geq 1}$,
while $C_{4,Q}=4\tilde{C}_{3,Q}\tilde{C}_{4,Q}C_{2,Q}$.
It is straightforward to verify
\begin{align}
	&\label{l2.3.23}
	\begin{aligned}[b]
	\Phi_{\theta}^{n}(v)
	=
	\int\cdots\int
	&
	\tilde{A}_{\theta}^{n}(v,v_{1},\dots,v_{n}) C_{\theta}(v_{n})
	\\
	&\cdot 
	U_{\theta}^{n}(dv_{1},\dots,dv_{n}|v),
	\end{aligned}
	\\
	&\label{l2.3.25}
	\begin{aligned}[b]
	\Psi_{\theta}^{n}(v)
	=
	\int\cdots\int
	&
	\tilde{B}_{\theta}^{n}(v,v_{1},\dots,v_{n}) C_{\theta}(v_{n})
	\\
	&\cdot 
	U_{\theta}^{n}(dv_{1},\dots,dv_{n}|v).
	\end{aligned}
\end{align}
Combining this with (\ref{l2.3.1}), (\ref{l2.3.3}), we get
\begin{align*}
	\|\Phi_{\theta}^{n}(v) \|
	\leq &
	\begin{aligned}[t]
	\int\cdots\int
	&
	\left\|
	\tilde{A}_{\theta}^{n}(v,v_{1},\dots,v_{n}) C_{\theta}(v_{n})
	\right\|
	\\
	&\cdot 
	U_{\theta}^{n}(dv_{1},\dots,dv_{n}|v)
	\end{aligned}
	\\
	\leq &
	\tilde{C}_{2,Q}\rho_{3,Q}^{2n}
	\leq
	C_{4,Q}\rho_{3,Q}^{n},
	\\
	\|\Psi_{\theta}^{n}(v) \|
	\leq &
	\begin{aligned}[t]
	\int\cdots\int
	&
	\left\|
	\tilde{B}_{\theta}^{n}(v,v_{1},\dots,v_{n}) C_{\theta}(v_{n})
	\right\|
	\\
	&\cdot 
	U_{\theta}^{n}(dv_{1},\dots,dv_{n}|v)
	\end{aligned}
	\\
	\leq&
	\tilde{C}_{3,Q}\rho_{3,Q}^{2n}
	\leq
	C_{4,Q}\rho_{3,Q}^{n}.
\end{align*}
Moreover, (\ref{l2.3.5}), (\ref{l2.3.23}), (\ref{l2.3.31}) imply
\begin{align*}
	\|\Phi_{\theta}^{n}(v) - \Phi_{\theta'}^{n}(v) \|
	\leq &
	\begin{aligned}[t]
	\int\!\cdots\!\int
	&
	\begin{aligned}[t]
	\Big\|
	&\tilde{A}_{\theta}^{n}(v,v_{1},\dots,v_{n}) C_{\theta}(v_{n})
	\\
	&-
	\tilde{A}_{\theta'}^{n}(v,v_{1},\dots,v_{n}) C_{\theta'}(v_{n})
	\Big\|
	\end{aligned}
	\\
	&\cdot 
	U_{\theta}^{n}(dv_{1},\dots,dv_{n}|v)
	\end{aligned}
	\\
	&
	+
	\begin{aligned}[t]
	\int\!\cdots\!\int
	&
	\Big\|
	\tilde{A}_{\theta'}^{n}(v,v_{1},\dots,v_{n}) C_{\theta'}(v_{n})
	\Big\|
	\\
	&\cdot 
	|U_{\theta}^{n}-U_{\theta'}^{n}|(dv_{1},\dots,dv_{n}|v)
	\end{aligned}
	\\
	\leq &
	2\tilde{C}_{2,Q}C_{2,Q} \rho_{3,Q}^{2n} (n+1) \|\theta-\theta'\|
	\\
	\leq &
	C_{4,Q}\rho_{3,Q}^{n} \|\theta-\theta'\|.
\end{align*}
Similarly, (\ref{l2.3.7}), (\ref{l2.3.25}), (\ref{l2.3.31}) yield
\begin{align*}
	\|\Psi_{\theta}^{n}(v) - \Psi_{\theta'}^{n}(v) \|
	\leq &
	\begin{aligned}[t]
	\int\!\cdots\!\int
	&
	\begin{aligned}[t]
	\Big\|
	&\tilde{B}_{\theta}^{n}(v,v_{1},\dots,v_{n}) C_{\theta}(v_{n})
	\\
	&-
	\tilde{B}_{\theta'}^{n}(v,v_{1},\dots,v_{n}) C_{\theta'}(v_{n})
	\Big\|
	\end{aligned}
	\\
	&\cdot 
	U_{\theta}^{n}(dv_{1},\dots,dv_{n}|v)
	\end{aligned}
	\\
	&
	+
	\begin{aligned}[t]
	\int\!\cdots\!\int
	&
	\Big\|
	\tilde{B}_{\theta'}^{n}(v,v_{1},\dots,v_{n}) C_{\theta'}(v_{n})
	\Big\|
	\\
	&\cdot 
	|U_{\theta}^{n}-U_{\theta'}^{n}|(dv_{1},\dots,dv_{n}|v)
	\end{aligned}
	\\
	\leq &
	2\tilde{C}_{3,Q}C_{2,Q} \rho_{3,Q}^{2n} (n+1) \|\theta-\theta'\|
	\\
	\leq &
	C_{4,Q}\rho_{3,Q}^{n} \|\theta-\theta'\|.
\end{align*}
\end{proof}

\begin{lemma}\label{lemma2.4}
Let Assumptions \ref{a2} -- \ref{a4} hold.
Then, the following is true:

(i) $h(\theta)$ is well-defined on $\Theta$.

(ii) $h(\theta) = \lim_{n\rightarrow\infty} E\left( H(\theta,Z_{n}^{\theta} ) \right)$
for each $\theta\in\Theta$ satisfying $E(\|W_{0}^{\theta}\Lambda\|)<\infty$.

(iii) There exists a function $\tilde{H}(\theta,z)$
mapping $\theta\in\Theta$, $z\in{\cal Z}$ to $\mathbb{R}^{d}$
such that
\begin{align}\label{l2.4.1*}
	H(\theta,z)-h(\theta)
	=
	\tilde{H}(\theta,z)
	-
	(\Pi\tilde{H})(\theta,z)
\end{align}
for all $\theta\in\Theta$, $z\in{\cal Z}$.
Here, $(\Pi\tilde{H})(\theta,z)$ denotes $\int \tilde{H}(\theta,z')\Pi_{\theta}(z,dz')$.

(iv) There exists a real number $C_{5,Q}\in[1,\infty)$ (possibly depending on $N$)
such that
\begin{align*}
	&
	\max\{ \|H(\theta,z)\|, \|\tilde{H}(\theta,z)\|, \|(\Pi\tilde{H})(\theta,z)\| \}
	\\
	&\leq
	C_{5,Q}(1+\|W\Lambda\|),
	\\
	&
	\|(\Pi\tilde{H})(\theta,z) - (\Pi\tilde{H})(\theta',z) \|
	\leq
	C_{5,Q}\|\theta-\theta'\|(1+\|W\Lambda\|)
\end{align*}
for all $\theta,\theta'\in Q$, $v\in{\cal V}$, $W\in\mathbb{R}^{d\times N}$
and $z=(v,W)$
($\Lambda$ is defined in (\ref{2*.701})).
\end{lemma}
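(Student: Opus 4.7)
The plan is to handle all four claims by exploiting the geometric decay estimates from Lemma~\ref{lemma2.3} and the geometric ergodicity from Lemma~\ref{lemma2.1}, together with an algebraic reduction that controls the potentially unbounded $W$-component of the state $Z_{n}^{\theta}$. Parts (i) and (ii) flow from a single computation: iterating the recursion (\ref{2*.23}) gives
\begin{align*}
W_{n}^{\theta}=W_{0}^{\theta}\tilde{A}_{\theta}^{n}(V_{0}^{\theta},\dots,V_{n}^{\theta})+\sum_{k=0}^{n-1}\tilde{B}_{\theta}^{n-k}(V_{k}^{\theta},\dots,V_{n}^{\theta});
\end{align*}
multiplying by $C_{\theta}(V_{n}^{\theta})$, conditioning on $V_{k}^{\theta}$ and invoking (\ref{2*.33}) yields
\begin{align*}
E\bigl(H(\theta,Z_{n}^{\theta})\bigr)=E\bigl(W_{0}^{\theta}\tilde{A}_{\theta}^{n}(V_{0}^{\theta},\dots,V_{n}^{\theta})C_{\theta}(V_{n}^{\theta})\bigr)+E\bigl(D_{\theta}(V_{n}^{\theta})\bigr)+\sum_{m=1}^{n}E\bigl(\Psi_{\theta}^{m}(V_{n-m}^{\theta})\bigr).
\end{align*}
The crucial observation is that $e^{T}C_{\theta}=0$ and $e^{T}A_{\theta}=e^{T}$ (from (\ref{2*.5})) force $\tilde{A}_{\theta}^{n}(\dots)C_{\theta}(V_{n}^{\theta})$ into the sum-zero subspace on which $\Lambda$ acts as the identity, so $W_{0}^{\theta}\tilde{A}_{\theta}^{n}C_{\theta}=(W_{0}^{\theta}\Lambda)\tilde{A}_{\theta}^{n}C_{\theta}$; combined with the decay $\|\tilde{A}_{\theta}^{n}(\dots)C_{\theta}\|\leq \tilde{C}_{2,Q}\rho_{4,Q}^{2n}$ that is implicit in the proof of Lemma~\ref{lemma2.3}, this first term vanishes as $n\rightarrow\infty$ whenever $E(\|W_{0}^{\theta}\Lambda\|)<\infty$. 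Geometric ergodicity (Lemma~\ref{lemma2.1}) gives $E(\Psi_{\theta}^{m}(V_{n-m}^{\theta}))\rightarrow\int \Psi_{\theta}^{m}\,d\tau_{\theta}$ for each fixed $m$, and the uniform bound $\|\Psi_{\theta}^{m}\|\leq C_{7,Q}\rho_{4,Q}^{m}$ from Lemma~\ref{lemma2.3} permits the interchange of sum and limit, establishing (ii); (i) is then immediate since the series defining $h(\theta)$ converges absolutely by the same decay, and $\int D_{\theta}\,d\tau_{\theta}$ is finite by Lemma~\ref{lemma2.2}.

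For (iii) and (iv), I would define
\begin{align*}
\tilde{H}(\theta,z)=\sum_{n=0}^{\infty}\left[(\Pi^{n}H)(\theta,z)-h(\theta)\right],
\end{align*}
which formally telescopes to satisfy the Poisson identity (\ref{l2.4.1*}). Conditioning the previous computation on $Z_{0}^{\theta}=z$ yields the representation
\begin{align*}
(\Pi^{n}H)(\theta,z)-h(\theta)=W\Phi_{\theta}^{n}(v)+\sum_{m=1}^{n}\int \Psi_{\theta}^{m}(v')\tilde{T}_{\theta}^{n-m}(v,dv')+\int D_{\theta}(v')\tilde{T}_{\theta}^{n}(v,dv')-\sum_{m>n}\int \Psi_{\theta}^{m}\,d\tau_{\theta}.
\end{align*}
The same sum-zero argument gives $W\Phi_{\theta}^{n}(v)=(W\Lambda)\Phi_{\theta}^{n}(v)$, and the total-variation bound $\|\tilde{T}_{\theta}^{k}(v,\cdot)\|_{TV}\leq 2C_{5,Q}\rho_{2,Q}^{k}$ (which follows from Lemma~\ref{lemma2.1} via Hahn decomposition and $\tilde{T}_{\theta}^{k}(v,{\cal V})=0$) combined with Lemma~\ref{lemma2.3} delivers $\|(\Pi^{n}H-h)(\theta,z)\|\leq C(1+\|W\Lambda\|)\rho^{n}$ for some $\rho\in(0,1)$. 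Consequently $\tilde{H}$ is well-defined, $\|\tilde{H}\|$ and $\|\Pi\tilde{H}\|$ satisfy the bound in (iv) (the bound on $\|H\|$ being immediate from $WC_{\theta}=W\Lambda C_{\theta}$ and Lemma~\ref{lemma2.2}), and the telescoping identity (\ref{l2.4.1*}) holds. The Lipschitz bound in (iv) is obtained by differencing the series termwise: each piece of the decomposition above is Lipschitz in $\theta$ by (\ref{l2.1.3*})--(\ref{l2.1.5*}) (for $\tilde{T}_{\theta}^{k}$ and $\tau_{\theta}$), Lemma~\ref{lemma2.2} (for $C_{\theta},D_{\theta}$), and Lemma~\ref{lemma2.3} (for $\Phi_{\theta}^{n},\Psi_{\theta}^{n}$), each contribution acquiring a $\|\theta'-\theta''\|$ factor while retaining geometric summability, and the sum-zero reduction again replaces every prefactor $W$ by $W\Lambda$.

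The main obstacle is managing the unbounded growth of $W_{n}^{\theta}$: the recursion (\ref{2*.1}) can drive $\|W_{n}^{\theta}\|$ to infinity linearly in $n$, so no naive bound on $\Pi^{n}H$ can be uniform in $z$. The resolution is the systematic exploitation of the sum-zero property $e^{T}C_{\theta}=0$ together with the column-stochastic structure $e^{T}A_{\theta}=e^{T}$: every occurrence of $W$ in the analysis appears contracted against a vector (either $C_{\theta}$, $\Phi_{\theta}^{n}$, or their $\theta$-differences) that lives in the sum-zero subspace, and can therefore be rewritten with $W\Lambda$ in place of $W$. Only the component $W\Lambda$, which stays integrable under the hypothesis $E(\|W_{0}^{\theta}\Lambda\|)<\infty$ and whose conditional growth is controlled by the geometric decay of $\tilde{A}_{\theta}^{n}C_{\theta}$, ever enters the estimates; without this projection, the Neumann series defining $\tilde{H}$ would fail to converge and the Poisson solution would not exist in the required form.
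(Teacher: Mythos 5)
Your proposal is correct and follows essentially the same route as the paper: iterate the weight recursion, use the sum-zero property $e^{T}C_{\theta}=0$ together with $e^{T}A_{\theta}=e^{T}$ to replace every prefactor $W$ by $W\Lambda$, obtain the representation of $(\Pi^{n}H)(\theta,z)-h(\theta)$ in terms of $\tilde{T}_{\theta}^{k}$, $\Phi_{\theta}^{n}$, $\Psi_{\theta}^{k}$, and define $\tilde{H}$ as the resulting geometrically convergent series, with the Lipschitz estimate obtained by termwise differencing via Lemmas \ref{lemma2.1} -- \ref{lemma2.3}. The only cosmetic differences (proving (ii) before the uniform bound, and invoking the decay of $\tilde{A}_{\theta}^{n}C_{\theta}$ directly rather than through the stated bound on $\Phi_{\theta}^{n}$) do not change the argument.
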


\begin{proof}
Throughout the proof, the following notation is used.
$\theta$, $\theta'$ are any elements of $Q$.
$v$, $W$ are any elements of ${\cal V}$, $\mathbb{R}^{d\times N}$ (respectively),
while $z=(v,W)$.
$n,k$ are any positive integers.

Owing to (\ref{2*.5}), we have
\begin{align*}
	e^{T} \tilde{A}_{\theta}^{n}(V_{0}^{\theta},\dots,V_{n}^{\theta} ) C_{\theta}(V_{n}^{\theta} )
	=
	e^{T} C_{\theta}(V_{n}^{\theta} )
	=
	0.
\end{align*}
Therefore, we get
\begin{align}\label{l2.4.301}
	&
	\Lambda \tilde{A}_{\theta}^{n}(V_{0}^{\theta},\dots,V_{n}^{\theta} ) C_{\theta}(V_{n}^{\theta} )
	\nonumber\\
	&=
	\tilde{A}_{\theta}^{n}(V_{0}^{\theta},\dots,V_{n}^{\theta} ) C_{\theta}(V_{n}^{\theta} )
	-
	\frac{e}{N}
	e^{T} \tilde{A}_{\theta}^{n}(V_{0}^{\theta},\dots,V_{n}^{\theta} ) C_{\theta}(V_{n}^{\theta} )
	\nonumber\\
	&=
	\tilde{A}_{\theta}^{n}(V_{0}^{\theta},\dots,V_{n}^{\theta} ) C_{\theta}(V_{n}^{\theta} ).
\end{align}
Moreover, iterating (\ref{2*.23}), it is straightforward to verify
\begin{align}\label{l2.4.301'}
	W_{n}^{\theta}
	=
	W_{0}^{\theta}
	\tilde{A}_{\theta}^{n}(V_{0}^{\theta},\dots,V_{n}^{\theta} )
	+
	\sum_{k=0}^{n-1}
	\tilde{B}_{\theta}^{n-k}(V_{k}^{\theta},\dots,V_{n}^{\theta} ).
\end{align}
Combining this with (\ref{l2.4.301}), we conclude
\begin{align*}
	H(\theta,Z_{n}^{\theta} )
	=&
	D_{\theta}(V_{n}^{\theta} )
	+
	W_{0}^{\theta}
	\tilde{A}_{\theta}^{n}(V_{0}^{\theta},\dots,V_{n}^{\theta} ) C_{\theta}(V_{n}^{\theta} )
	\\
	&
	+
	\sum_{k=0}^{n-1}
	\tilde{B}_{\theta}^{n-k}(V_{k}^{\theta},\dots,V_{n}^{\theta} ) C_{\theta}(V_{n}^{\theta} )
	\\
	=&
	D_{\theta}(V_{n}^{\theta} )
	+
	W_{0}^{\theta} \Lambda
	\tilde{A}_{\theta}^{n}(V_{0}^{\theta},\dots,V_{n}^{\theta} ) C_{\theta}(V_{n}^{\theta} )
	\\
	&
	+
	\sum_{k=0}^{n-1}
	\tilde{B}_{\theta}^{n-k}(V_{k}^{\theta},\dots,V_{n}^{\theta} ) C_{\theta}(V_{n}^{\theta} ).
\end{align*}
Consequently, we have
\begin{align}\label{l2.4.303}
	&
	(\Pi^{n}H)(\theta,z)
	=
	E\left(\left. H(\theta,Z_{n}^{\theta} ) \right|Z_{0}^{\theta}=z \right)
	\nonumber\\
	&
	\begin{aligned}[t]
	=&
	E\left(\left. D_{\theta}(V_{n}^{\theta} ) \right| V_{0}^{\theta}=v \right)
	\\
	&+
	E\left(\left.
	W\Lambda \tilde{A}_{\theta}^{n}(V_{0}^{\theta},\dots,V_{n}^{\theta} ) C_{\theta}(V_{n}^{\theta} )
	\right| V_{0}^{\theta}=v \right)
	\nonumber\\
	&+
	\sum_{k=0}^{n-1}
	\!E\!\left(\left.
	E\!\left(\left.
	\tilde{B}_{\theta}^{n-k}(V_{k}^{\theta},\dots,V_{n}^{\theta} ) C_{\theta}(V_{n}^{\theta} )
	\right| V_{k}^{\theta} \right)
	\right| V_{0}^{\theta} = v \right)
	\end{aligned}
	\nonumber\\
	&=
	(T^{n}D)_{\theta}(v)
	+
	W\Lambda\Phi_{\theta}^{n}(v)
	+
	\sum_{k=0}^{n-1}
	E\left(\left. \Psi_{\theta}^{n-k}(V_{k}^{\theta}) \right| V_{0}^{\theta}=v \right)
	\nonumber\\
	&=
	(T^{n}D)_{\theta}(v)
	+
	W\Lambda\Phi_{\theta}^{n}(v)
	+
	\sum_{k=0}^{n-1}
	(T^{k}\Psi^{n-k})_{\theta}(v).
\end{align}
Here, $(\Pi^{n}H)(\theta,z)$ denotes $\int H(\theta,z')\Pi_{\theta}^{n}(z,dz')$,
while
$(T^{n}D)_{\theta}(v)$,
$(T^{k}\Psi^{l})_{\theta}(v)$ stand for
$\int D_{\theta}(v')T_{\theta}^{n}(v,dv')$,
$\int \Psi_{\theta}^{l}(v') T_{\theta}^{k}(v,dv')$
(respectively).

Let $\beta_{Q}=\max\{\rho_{1,Q}^{1/2},\rho_{3,Q}^{1/2} \}$,
$\tilde{C}_{1,Q}=4C_{2,Q}C_{3,Q}C_{4,Q}$,
$\tilde{C}_{2,Q}=2\tilde{C}_{1,Q}(1-\beta_{Q} )^{-1}$
($\rho_{1,Q}$, $\rho_{3,Q}$, $C_{2,Q}$, $C_{3,Q}$, $C_{4,Q}$
are specified in Lemmas \ref{lemma2.1} -- \ref{lemma2.3}).
Owing to Lemma \ref{lemma2.3}, we have
\begin{align}\label{l2.4.21}
	\int \|\Psi_{\theta}^{n}(v)\| \tau_{\theta}(dv)
	\leq
	C_{4,Q}\rho_{3,Q}^{n}
	\leq
	\tilde{C}_{1,Q}\beta_{Q}^{2n}.
\end{align}
Consequently, Lemma \ref{lemma2.2} yields
\begin{align*}
	&
	\int \|D_{\theta}(v)\|\tau_{\theta}(dv)
	+
	\sum_{n=1}^{\infty}
	\int \|\Psi_{\theta}^{n}(v)\|\tau_{\theta}(dv)
	\\
	&\leq
	C_{3,Q} + \tilde{C}_{1,Q} \sum_{n=1}^{\infty} \beta_{Q}^{2n}
	\leq
	2\tilde{C}_{1,Q}(1-\beta_{Q})^{-1}
	\leq
	\tilde{C}_{2,Q}
	\!<\!\infty.
\end{align*}
Hence, $h(\theta)$ is well-defined
and satisfies $\|h(\theta)\|\leq\tilde{C}_{2,Q}$.
Since $Q$ is any compact set in $\Theta$, we conclude that (i) holds.
Moreover, using (\ref{l2.4.303}), we deduce
\begin{align}\label{l2.4.305}
	(\Pi^{n}H)(\theta,z)
	-
	h(\theta)
	=&
	(\tilde{T}^{n}D)_{\theta}(v)
	+
	W\Lambda\Phi_{\theta}^{n}(v)
	\nonumber\\
	&+
	\sum_{k=0}^{n-1}
	(\tilde{T}^{k}\Psi^{n-k})_{\theta}(v)
	\nonumber\\
	&-
	\sum_{k=n+1}^{\infty}
	\int \Psi_{\theta}^{k}(v')\tau_{\theta}(dv').
\end{align}
Here, $(\tilde{T}^{n}D)_{\theta}(v)$,
$(\tilde{T}^{k}\Psi^{l})_{\theta}(v)$ denote
$\int D_{\theta}(v')\tilde{T}_{\theta}^{n}(v,dv')$,
$\int \Psi_{\theta}^{l}(v') \tilde{T}_{\theta}^{k}(v,dv')$
(respectively).

Let $\tilde{C}_{3,Q}\in[1,\infty)$ be an upper bound of
sequence $\{n\beta_{Q}^{n-1} \}_{n\geq 1}$,
while $\tilde{C}_{4,Q}=2\tilde{C}_{2,Q}\tilde{C}_{3,Q}(1-\beta_{Q} )^{-1}$,
$C_{5,Q}=\tilde{C}_{4,Q}(1-\beta_{Q} )^{-1}$.
Owing to Lemmas \ref{lemma2.1}, \ref{lemma2.2}, we have
\begin{align}\label{l2.4.25}
	\|(\tilde{T}^{n}D)_{\theta}(v) \|
	\leq&
	\int \|D_{\theta}(v')\| |\tilde{T}_{\theta}^{n}|(v,dv')
	\nonumber\\
	\leq&
	C_{2,Q}C_{3,Q}\rho_{1,Q}^{n}
	\leq
	\tilde{C}_{1,Q} \beta_{Q}^{2n}.
\end{align}
Similarly, due to Lemmas \ref{lemma2.1}, \ref{lemma2.3},
we have
\begin{align}\label{l2.4.27}
	\|(\tilde{T}^{n-k}\Psi^{k})_{\theta}(v) \|
	\leq&
	\int \|\Psi_{\theta}^{k}(v')\| |\tilde{T}_{\theta}^{n-k}|(v,dv')
	\nonumber\\
	\leq&
	C_{2,Q} C_{4,Q} \rho_{1,Q}^{n-k} \rho_{3,Q}^{k}
	\leq
	\tilde{C}_{1,Q} \beta_{Q}^{2n}.
\end{align}
Combining Lemma \ref{lemma2.3} and (\ref{l2.4.21}), (\ref{l2.4.305}) -- (\ref{l2.4.27}),
we get
\begin{align}\label{l2.4.501}
	\|(\Pi^{n}H)(\theta,z)-h(\theta) \|
	\leq &
	\|(\tilde{T}^{n}D)_{\theta}(v)\|
	+
	\|\Phi_{\theta}^{n}(v)\| \|W\Lambda\|
	\nonumber\\
	&+
	\sum_{k=1}^{n}
	\|(\tilde{T}^{n-k}\Psi^{k})_{\theta}(v) \|
	\nonumber\\
	&+
	\sum_{k=n+1}^{\infty}
	\left\|
	\Psi_{\theta}^{k}(v')
	\right\|
	\tau_{\theta}(dv')
	\nonumber\\
	\leq &
	\tilde{C}_{1,Q}\beta_{Q}^{2n} (n+1)
	\!+\!
	C_{4,Q}\rho_{3,Q}^{n} \|W\Lambda\|
	\nonumber\\
	&+
	\tilde{C}_{1,Q} \sum_{k=n+1}^{\infty} \beta_{Q}^{2k}
	\nonumber\\
	\leq &
	\tilde{C}_{1,Q}\beta_{Q}^{2n} (n+1) (1 + \|W\Lambda\| )
	\nonumber\\
	&+
	\tilde{C}_{1,Q}\beta_{Q}^{2n}(1-\beta_{Q})^{-1}
	\nonumber\\
	\leq &
	\tilde{C}_{4,Q}\beta_{Q}^{n} (1+\|W\Lambda\| ).
\end{align}
Since $\|h(\theta)\|\leq\tilde{C}_{2,Q}$ and
$\Lambda C_{\theta}(v)
= C_{\theta}(v)$ (due to (\ref{2*.5})),
Lemma \ref{lemma2.2} yields
\begin{align*}
	\|(\Pi^{0}H)(\theta,z)\!-\!h(\theta) \|
	\leq &
	\|C_{\theta}(v)\| \|W\Lambda\| \!+\! \|D_{\theta}(v)\| \!+\! \|h(\theta)\|
	\\
	\leq &
	C_{3,Q}(1+\|W\Lambda\|) + \tilde{C}_{2,Q}
	\\
	\leq &
	\tilde{C}_{4,Q}(1+\|W\Lambda\|).
\end{align*}
Hence, we have
\begin{align}\label{l2.4.29}
	\sum_{n=0}^{\infty}
	\|(\Pi^{n}H)(\theta,z)-h(\theta) \|
	\leq&
	\tilde{C}_{4,Q}(1+\|W\Lambda\|)
	\sum_{n=0}^{\infty}\beta_{Q}^{n}
	\nonumber\\
	\leq&
	C_{5,Q}(1+\|W\Lambda\|).
\end{align}

Owing to Lemmas \ref{lemma2.1}, \ref{lemma2.2}, we have
\begin{align}\label{l2.4.31}
	&
	\|(\tilde{T}^{n}D)_{\theta}(v) - (\tilde{T}^{n}D)_{\theta'}(v) \|
	\nonumber\\
	&
	\begin{aligned}
	\leq &
	\int \|D_{\theta}(v')-D_{\theta'}(v') \| |\tilde{T}_{\theta}^{n}|(v,dv')
	\\
	&+
	\int \|D_{\theta'}(v') \| |\tilde{T}_{\theta}^{n}-\tilde{T}_{\theta'}^{n} |(v,dv')
	\end{aligned}
	\nonumber\\
	&\leq 
	2C_{2,Q}C_{3,Q}\rho_{1,Q}^{n}\|\theta-\theta'\|
	\leq 
	\tilde{C}_{1,Q}\beta_{Q}^{2n} \|\theta-\theta'\|.
\end{align}
Moreover, Lemmas \ref{lemma2.1}, \ref{lemma2.3} imply
\begin{align}\label{l2.4.33}
	&
	\|(\tilde{T}^{n-k}\Psi^{k})_{\theta}(v) - (\tilde{T}^{n-k}\Psi^{k})_{\theta'}(v) \|
	\nonumber\\
	&\begin{aligned}
	\leq &
	\int \|\Psi_{\theta}^{k}(v') - \Psi_{\theta'}^{k}(v') \| |\tilde{T}_{\theta}^{n-k}|(v,dv')
	\nonumber\\
	&+
	\int \|\Psi_{\theta'}^{k}(v') \| |\tilde{T}_{\theta}^{n-k}-\tilde{T}_{\theta'}^{n-k} |(v,dv')
	\end{aligned}
	\nonumber\\
	&\leq 
	2C_{2,Q}C_{4,Q}\rho_{1,Q}^{n-k}\rho_{3,Q}^{k} \|\theta-\theta'\|
	\leq 
	\tilde{C}_{1,Q} \beta_{Q}^{2n} \|\theta-\theta'\|
\end{align}
for $n\geq k$.
The same lemmas also yield
\begin{align}\label{l2.4.23}
	&
	\left\|
	\int \Psi_{\theta}^{n}(v) \tau_{\theta}(dv)
	\!-\!
	\int \Psi_{\theta'}^{n}(v) \tau_{\theta'}(dv)
	\right\|
	\nonumber\\
	&
	\begin{aligned}
	\leq &
	\int \|\Psi_{\theta}^{n}(v) - \Psi_{\theta'}^{n}(v) \| \tau_{\theta}(dv)
	+
	\int \|\Psi_{\theta'}^{n}(v) \| |\tau_{\theta}-\tau_{\theta'} |(dv)
	\end{aligned}
	\nonumber\\
	&\leq 
	2 C_{2,Q} C_{4,Q} \rho_{3,Q}^{n} \|\theta-\theta'\|
	\leq 
	\tilde{C}_{1,Q} \beta_{Q}^{2n} \|\theta-\theta'\|.
\end{align}
Combining Lemma \ref{lemma2.3} and (\ref{l2.4.305}), (\ref{l2.4.31}) -- (\ref{l2.4.23}),
we get
\begin{align}\label{l2.4.23'}
	&
	\|((\Pi^{n}H)(\theta,z)-h(\theta) ) \!-\! ((\Pi^{n}H)(\theta',z)-h(\theta') )\|
	\nonumber\\
	&
	\leq
	\begin{aligned}[t]
	&
	\|(\tilde{T}^{n}D)_{\theta}(v) \!-\! (\tilde{T}^{n}D)_{\theta'}(v) \|
	\!+\!
	\|\Phi_{\theta}^{n}(v) - \Phi_{\theta'}^{n}(v) \| \|W\Lambda\|
	\\
	&
	+
	\sum_{k=1}^{n}
	\|(\tilde{T}^{n-k}\Psi^{k})_{\theta}(v) - (\tilde{T}^{n-k}\Psi^{k})_{\theta'}(v) \|
	\\
	&+
	\sum_{k=n+1}^{\infty}
	\left\|
	\int \Psi_{\theta}^{k}(v')\tau_{\theta}(dv')
	-
	\int \Psi_{\theta'}^{k}(v')\tau_{\theta'}(dv')
	\right\|
	\end{aligned}
	\nonumber\\
	&
	\begin{aligned}
	\leq&
	\tilde{C}_{1,Q}\beta_{Q}^{2n}(n+1) \|\theta-\theta'\|
	\!+\!
	C_{4,Q}\rho_{3,Q}^{n}\|W\Lambda\|\|\theta-\theta'\|
	\\
	&+
	\tilde{C}_{1,Q}\|\theta-\theta'\|
	\sum_{k=n+1}^{\infty} \beta_{Q}^{2k}
	\end{aligned}
	\nonumber\\
	&
	\begin{aligned}
	\leq&
	\tilde{C}_{1,Q}\beta_{Q}^{2n}(n+1) (1+\|W\Lambda\| ) \|\theta-\theta'\|
	\\
	&+
	\tilde{C}_{1,Q}\beta_{Q}^{2n}(1-\beta_{Q} )^{-1} \|\theta-\theta'\|
	\end{aligned}
	\nonumber\\
	&\leq
	\tilde{C}_{4,Q} \beta_{Q}^{n} \|\theta-\theta'\|(1+\|W\Lambda\|).
\end{align}
Hence, we have
\begin{align}\label{l2.4.35}
	&
	\sum_{n=1}^{\infty}
	\|((\Pi^{n}H)(\theta,z)-h(\theta) ) - ((\Pi^{n}H)(\theta',z)-h(\theta') ) \|
	\nonumber\\
	&\leq 
	\tilde{C}_{4,Q} \|\theta-\theta'\|(1+\|W\Lambda\|)
	\sum_{n=1}^{\infty} \beta_{Q}^{n}
	\nonumber\\
	&\leq 
	C_{5,Q}\|\theta-\theta'\|(1+\|W\Lambda\|).
\end{align}

Let $\tilde{H}(\theta,z)$ be the function defined by
\begin{align*}
	\tilde{H}(\theta,z) = \sum_{n=0}^{\infty} ((\Pi^{n}H)(\theta,z)-h(\theta) ).
\end{align*}
Then, (\ref{l2.4.29}) implies that
$\tilde{H}(\theta,z)$, $(\Pi\tilde{H})(\theta,z)$ are  well-defined
and satisfy
\begin{align*}
	(\Pi\tilde{H})(\theta,z) = \sum_{n=1}^{\infty} ((\Pi^{n}H)(\theta,z)-h(\theta) ).
\end{align*}
Consequently, (\ref{l2.4.1*}) holds.
Since $Q$ is any compact set in $\Theta$,
we conclude that (iii) also holds.
Moreover, using (\ref{l2.4.29}), (\ref{l2.4.35}),
we deduce that (iv) is also true.

When $E(\|W_{0}^{\theta}\Lambda\|)<\infty$,
(\ref{l2.4.501}) implies
\begin{align*}
	\left\|E(H(\theta,Z_{n}^{\theta} ) ) - h(\theta) \right\|
	=&
	\left\| E\left( (\Pi^{n}H)(\theta,Z_{0}^{\theta}) - h(\theta) \right) \right\|
	\\
	\leq &
	E\left( \left\| (\Pi^{n}H)(\theta,Z_{0}^{\theta}) - h(\theta) \right\| \right)
	\\
	\leq &
	\tilde{C}_{4,Q}\beta_{Q}^{n}
	(1 + E(\|W_{0}^{\theta}\Lambda \| ) ).
\end{align*}
Therefore,
$
	h(\theta)=\lim_{n\rightarrow\infty} E(H(\theta,Z_{n}^{\theta} ) )
$
if $E(\|W_{0}^{\theta}\Lambda\|)<\infty$.
As $Q$ is any compact set in $\Theta$, we conclude that (ii) holds.
\end{proof}

\begin{lemma}\label{lemma3.1}
Let Assumptions \ref{a3} and \ref{a4} hold.
Then, there exists a real number $C_{6,Q}\in[1,\infty)$ (possibly depending on $N$)
such that
\begin{align*}
	\|W_{n}\Lambda\|
	I_{ \{\tau_{Q}\geq n\} }
	\leq
	C_{6,Q}(1+\|W_{0}\Lambda\| )
\end{align*}
for $n\geq 1$, where $\tau_{Q}$ is the stopping time defined by
\begin{align*}
	\tau_{Q}
	=
	\inf\left(\{n\geq 0:\theta_{n}\not\in Q \} \cup \{\infty \} \right)
\end{align*}
($\Lambda$ is specified in (\ref{2*.701})).
\end{lemma}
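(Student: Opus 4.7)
The plan is to strip $\Lambda$ off the recursion (\ref{2*.1}), observe that $\{W_{n}\Lambda\}$ itself satisfies an autonomous recursion driven by the contracted matrices $A_{\theta_{n}}\Lambda$, and then exploit the uniform Doeblin minorization from Lemma \ref{lemma2.2}(i) to close the argument. The starting point is (\ref{2*.1}) multiplied on the right by $\Lambda$. Since $e^{T}A_{\theta}(v,v')=e^{T}$ by (\ref{2*.5}) and $e^{T}\Lambda=0$, one has $(W_{n}ee^{T}/N)A_{\theta_{n}}(V_{n},V_{n+1})\Lambda=0$; decomposing $W_{n}=W_{n}\Lambda+W_{n}ee^{T}/N$ then yields
\begin{equation*}
W_{n+1}\Lambda \;=\; (W_{n}\Lambda)\,A_{\theta_{n}}(V_{n},V_{n+1})\Lambda \;+\; B_{\theta_{n}}(V_{n},V_{n+1})\Lambda.
\end{equation*}

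Iterating this identity, I would obtain
\begin{equation*}
W_{n+1}\Lambda \;=\; W_{0}\Lambda\prod_{k=0}^{n}\!\big(A_{\theta_{k}}(V_{k},V_{k+1})\Lambda\big) \;+\; \sum_{k=0}^{n}B_{\theta_{k}}(V_{k},V_{k+1})\Lambda\prod_{j=k+1}^{n}\!\big(A_{\theta_{j}}(V_{j},V_{j+1})\Lambda\big),
\end{equation*}
with the convention that an empty product equals $I$. On the event $\{\tau_{Q}>n\}$ every $\theta_{k}$ with $0\leq k\leq n$ lies in $Q$, so the uniform bounds of Lemma \ref{lemma2.2} apply along the entire trajectory: $A_{\theta_{k}}^{i,j}(V_{k},V_{k+1})\geq\rho_{3,Q}/N$ and $\|B_{\theta_{k}}(V_{k},V_{k+1})\|\leq C_{6,Q}$.

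The crux is an inhomogeneous contraction bound of the form $\|\prod_{k=k_{0}}^{n}A_{\theta_{k}}(V_{k},V_{k+1})\Lambda\|\leq\tilde{C}_{Q}\rho_{Q}^{n-k_{0}}$ for some $\tilde{C}_{Q}\in[1,\infty)$ and $\rho_{Q}\in(0,1)$ depending only on $Q$ and $N$, valid on $\{\tau_{Q}>n\}$. Since $e^{T}A_{\theta_{k}}=e^{T}$, every column of $A_{\theta_{k}}\Lambda$ has zero $e^{T}$-sum (equivalently, $A_{\theta_{k}}\Lambda=\Lambda A_{\theta_{k}}\Lambda$), and the Doeblin lower bound from Lemma \ref{lemma2.2}(i) is exactly the hypothesis required to invoke Lemma \ref{lemmaa1} of the appendix---the same contraction device that drives the proof of Lemma \ref{lemma2.3} (there applied to $C_{\theta}(v_{n})$, which lies in the range of $\Lambda$ by (\ref{2*.5})). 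This is where the real work of the proof lies; everything else is routine.

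Combining the two estimates via the triangle inequality and summing the resulting geometric series then gives
\begin{equation*}
\|W_{n+1}\Lambda\|\,I_{\{\tau_{Q}>n\}} \;\leq\; \tilde{C}_{Q}\|W_{0}\Lambda\| \;+\; \tilde{C}_{Q}C_{6,Q}\|\Lambda\|\sum_{k=0}^{n}\rho_{Q}^{n-k} \;\leq\; C_{9,Q}\big(1+\|W_{0}\Lambda\|\big)
\end{equation*}
for a suitably chosen $C_{9,Q}\in[1,\infty)$, which is the desired estimate. The indicator $I_{\{\tau_{Q}>n\}}$ is essential: it is precisely what allows the Doeblin minorization of Lemma \ref{lemma2.2}(i) to be invoked for every matrix appearing in the product, so that the contraction rate $\rho_{Q}$ is uniform along the (random) trajectory $\theta_{0},\dots,\theta_{n}$.
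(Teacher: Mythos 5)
Your proposal is correct and follows essentially the same route as the paper: iterate the linear recursion for $W_{n}$, use $e^{T}A_{\theta}(v,v')=e^{T}$ (so that $\Lambda$ can be inserted/absorbed in the products), invoke the appendix contraction result (Lemma \ref{lemmaa1}) with the uniform Doeblin bound $A_{\theta}^{i,j}\geq\rho_{3,Q}/N$ from Lemma \ref{lemma2.2}(i) on the event $\{\tau_{Q}>n\}$, and sum the resulting geometric series. The only cosmetic difference is that you project by $\Lambda$ before iterating whereas the paper iterates first and then multiplies by $\Lambda$; the two are equivalent.
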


\begin{proof}
Throughout the proof, the following notation is used.
$n$ is any positive integer.
$A_{n}$ and $B_{n}$ are the random matrices defined by
\begin{align*}
	A_{n} = A_{\theta_{n-1} }(V_{n-1}, V_{n} ),
	\;\;\;\;\;
	B_{n} = B_{\theta_{n-1} }(V_{n-1}, V_{n} ).
\end{align*}
$A_{k,k}$ and $A_{k,l}$ are the random matrices defined by
\begin{align*}
	A_{k,k} = I, \;\;\;\;\; A_{k,l} = A_{k+1} \cdots A_{l}
\end{align*}
for $l>k\geq 0$.
Then, iterating (\ref{2*.1}), we get
\begin{align}\label{l3.1.151}
	W_{n} = W_{0} A_{0,n} + \sum_{j=1}^{n} B_{j} A_{j,n}.
\end{align}
Moreover, (\ref{2*.5}) implies
$
	e^{T} A_{k,l} \Lambda = e^{T}\Lambda = 0
$
for $l\geq k\geq 0$. Consequently, we have
\begin{align*}
	\Lambda A_{k,l} \Lambda
	=
	A_{k,l} \Lambda
	-
	\frac{e}{N} e^{T} A_{k,l} \Lambda
	=
	A_{k,l} \Lambda.
\end{align*}
Combining this with (\ref{l3.1.151}), we get
\begin{align}\label{l3.1.153}
	W_{n}\Lambda
	=&
	W_{0}A_{0,n}\Lambda
	+
	\sum_{j=1}^{n} B_{j} A_{j,n} \Lambda
	\nonumber\\
	=&
	W_{0}\Lambda A_{0,n}\Lambda
	+
	\sum_{j=1}^{n} B_{k} A_{j,n} \Lambda.
\end{align}

Let $\beta_{Q}=1-\rho_{2,Q}$,
$\tilde{C}_{1,Q}=4\beta_{Q}^{-1}N$,
$\tilde{C}_{2,Q}=\tilde{C}_{1,Q}C_{3,Q}$,
$C_{6,Q}=\tilde{C}_{2,Q}(1-\beta_{Q} )^{-1}$
($\rho_{2,Q}$, $C_{3,Q}$ are specified in Lemma \ref{lemma2.2}).
Since $\theta_{0},\dots,\theta_{n-1}\in Q$ on $\{\tau_{Q}\geq n\}$,
Lemmas \ref{lemma2.2}, \ref{lemmaa1} (see Appendix \ref{appendix1}) and (\ref{2*.5}) imply
\begin{align}\label{l3.1.1}
	&
	\left\|
	A_{k,n}\Lambda
	\right\|
	I_{ \{\tau_{Q}\geq n\} }
	\nonumber\\
	&=
	\left\|
	A_{\theta_{k}}(V_{k},V_{k+1}) \cdots A_{\theta_{n-1}}(V_{n-1},V_{n}) \Lambda
	\right\|
	I_{ \{\tau_{Q}\geq n\} }
	\nonumber\\
	&\leq
	\tilde{C}_{1,Q}\beta_{Q}^{n-k}
\end{align}
for $n>k\geq 0$.
Consequently, Lemma \ref{lemma2.2} yields
\begin{align*}
	&
	\left\|B_{k} A_{k,n} \Lambda \right\| I_{ \{\tau_{Q}\geq n \} }
	\\
	&=
	\left\|B_{\theta_{k-1} }(V_{k-1},V_{k} ) A_{k,n} \Lambda \right\| I_{ \{\tau_{Q}\geq n \} }
	\\
	&\leq 
	\left\|B_{\theta_{k-1} }(V_{k-1},V_{k} ) \right\|
	\left\|A_{k,n} \Lambda \right\| I_{ \{\tau_{Q}\geq n \} }
	\\
	&\leq 
	\tilde{C}_{1,Q}C_{3,Q}\beta_{Q}^{n-k}
	\leq 
	\tilde{C}_{2,Q}\beta_{Q}^{n-k}
\end{align*}
for $n\geq k\geq 1$.
Combining this with (\ref{l3.1.153}), we get
\begin{align*}
	\|W_{n}\Lambda \| I_{ \{\tau_{Q}\geq n \} }
	\leq &
	\|W_{0}\Lambda \| \|A_{0,n}\Lambda \| I_{ \{\tau_{Q}\geq n \} }
	\\
	&+
	\sum_{j=1}^{n} \|B_{j}A_{j,n}\Lambda\| I_{ \{\tau_{Q}\geq n \} }
	\\
	\leq &
	\tilde{C}_{1,Q}\beta_{Q}^{n} \|W_{0}\Lambda\|
	+
	\tilde{C}_{2,Q} \sum_{j=1}^{n} \beta_{Q}^{n-j}
	\\
	\leq &
	\tilde{C}_{2,Q}(1-\beta_{Q} )^{-1} (1 + \|W_{0}\Lambda \| )
	\\
	=&
	C_{6,Q} (1 + \|W_{0}\Lambda \| ).
\end{align*}
\end{proof}

\begin{lemma}\label{lemma2.5}
Let Assumptions \ref{a2} -- \ref{a4} hold.
Then, there exists a real number $M_{Q}\in[1,\infty)$
(independent of $N$ and depending only on $p_{\theta}(x'|x)$,
$q_{\theta}(y|x)$) such that
\begin{align}\label{l2.5.1*}
	\|h(\theta) - \nabla l(\theta) \|
	\leq
	\frac{M_{Q} }{N}
\end{align}
for all $\theta\in Q$.
\end{lemma}

\begin{proof}
Throughout the proof, the following notation is used.
$H'_{\theta,y}(\xi,\zeta)$ and $H''_{\theta,y}(\xi,\zeta)$ are the functions defined by
\begin{align*}
	&
	H'_{\theta,y}(\xi,\zeta)
	=
	\frac{\int \nabla_{\theta} q_{\theta}(y|x) \xi(dx) }
	{\int q_{\theta}(y|x) \xi(dx) },
	\\
	&
	H''_{\theta,y}(\xi,\zeta)
	=
	\frac{\int q_{\theta}(y|x) \zeta(dx) }
	{\int q_{\theta}(y|x) \xi(dx) }
\end{align*}
for $\theta\in\Theta$, $y\in{\cal Y}$,
$\xi\in{\cal P}({\cal X})$, $\zeta\in{\cal M}_{s}^{d}({\cal X})$.
$A_{1,n}^{\theta}$, $A_{2,n}^{\theta}$ and $A_{3,n}^{\theta}$
are the random variables defined by
\begin{align*}
	&
	A_{1,n}^{\theta}
	=
	\int q_{\theta}(Y_{n}|x) \xi_{n}^{\theta}(dx),
	\\
	&
	A_{2,n}^{\theta}
	=
	\int q_{\theta}(Y_{n}|x) \tilde{\xi}_{n}^{\theta}(dx),
	\\
	&
	A_{3,n}^{\theta}
	=
	\int q_{\theta}(Y_{n}|x) F_{\theta,\boldsymbol Y}^{0:n}(dx|\xi_{0}^{\theta} )
\end{align*}
for $n\geq 0$.
$B_{1,n}^{\theta}$, $B_{2,n}^{\theta}$ and $B_{3,n}^{\theta}$
are the random variables defined by
\begin{align*}
	&
	B_{1,n}^{\theta}
	=
	\int \nabla_{\theta} q_{\theta}(Y_{n}|x) \xi_{n}^{\theta}(dx),
	\\
	&
	B_{2,n}^{\theta}
	=
	\int \nabla_{\theta} q_{\theta}(Y_{n}|x) \tilde{\xi}_{n}^{\theta}(dx),
	\\
	&
	B_{3,n}^{\theta}
	=
	\int \nabla_{\theta} q_{\theta}(Y_{n}|x) F_{\theta,\boldsymbol Y}^{0:n}(dx|\xi_{0}^{\theta} ).
\end{align*}
$C_{1,n}^{\theta}$, $C_{2,n}^{\theta}$ and $C_{3,n}^{\theta}$
are the random variables defined by
\begin{align*}
	&
	C_{1,n}^{\theta}
	=
	\int q_{\theta}(Y_{n}|x) \zeta_{n}^{\theta}(dx),
	\\
	&
	C_{2,n}^{\theta}
	=
	\int q_{\theta}(Y_{n}|x) \tilde{\zeta}_{n}^{\theta}(dx),
	\\
	&
	C_{3,n}^{\theta}
	=
	\int q_{\theta}(Y_{n}|x) G_{\theta,\boldsymbol Y}^{0:n}(dx|\xi_{0}^{\theta},\zeta_{0} ).
\end{align*}
For $1\leq i\leq N$,
$W_{n,i}^{\theta}$ is the $i$-th column of $W_{n}^{\theta}$.
$\xi_{n}^{\theta}(dx)$ and $\zeta_{n}^{\theta}(dx)$ are
the (empirical) measures defined by
\begin{align}
	&\label{l2.5.751.a}
	\xi_{n}^{\theta}(B)
	=
	\frac{1}{N} \sum_{i=1}^{N} \delta_{\hat{X}_{n,i}^{\theta} }(B),
	\\
	&\label{l2.5.751.b}
	\zeta_{n}^{\theta}(B)
	=
	\frac{1}{N} \sum_{i=1}^{N}
	\left( W_{n,i}^{\theta} - \frac{1}{N} \sum_{j=1}^{N} W_{n,j}^{\theta} \right)
	\delta_{\hat{X}_{n,i}^{\theta}}(B)
\end{align}
for $B\in{\cal B}({\cal X})$.
$\tilde{\xi}_{n}^{\theta}(dx)$ and
$\tilde{\zeta}_{n}^{\theta}(dx)$
are the (random) measures defined by
\begin{align*}
	&
	\tilde{\xi}_{n}^{\theta}(B)
	=
	\xi_{n}^{\theta}(B)
	-
	F_{\theta,\boldsymbol Y}^{0:n}(B|\xi_{0}^{\theta} ),
	\\
	&
	\tilde{\zeta}_{n}^{\theta}(B)
	=
	\zeta_{n}^{\theta}(B)
	-
	G_{\theta,\boldsymbol Y}^{0:n}(B|\xi_{0}^{\theta},\zeta_{0}^{\theta} )
\end{align*}
($\boldsymbol Y$, $F_{\theta,\boldsymbol y}^{0:n}(dx|\xi)$,
$G_{\theta,\boldsymbol y}^{0:n}(dx|\xi,\zeta)$
are defined in the statement of Lemma \ref{lemma1.3} and (\ref{1.401*}), (\ref{1.403*})).
Throughout the proof, we assume (without loss of generality)
that $\hat{X}_{0}^{\theta}=\hat{x}_{0}$, $W_{0}^{\theta}=\boldsymbol 0$
for each $\theta\in\Theta$,
where $\hat{x}_{0}\in{\cal X}^{N}$ is a deterministic vector
and $\boldsymbol 0$ is the $d\times N$ zero matrix.
Consequently, $\xi_{0}^{\theta}(dx)$, $\zeta_{0}^{\theta}(dx)$
are deterministic, independent of $\theta$ and satisfy $\|\zeta_{0}^{\theta} \|=0$.

In the rest of the proof,
let $\theta$, $y$ be any elements of $Q$, ${\cal Y}$ (respectively),
while $\xi$, $\zeta$ are any elements of ${\cal P}({\cal X})$, ${\cal M}_{s}^{d}({\cal X})$.
Moreover, let $n$ be any non-negative integer,
while $\varphi:{\cal X}\rightarrow[-1,1]$ is any function.
Then, relying on \cite[Theorem 2.1, Proposition 5.1]{tadic&doucet4}, we conclude that
there exists a real number $\tilde{C}_{1,Q}\in[1,\infty)$
(independent of $N$ and depending only on $p_{\theta}(x'|x)$,
$q_{\theta}(y|x)$) such that
$\|\zeta_{n}^{\theta} \| \leq \tilde{C}_{1,Q}$ and
\begin{align}
	&\label{l2.5.753.a}
	\left|
	E\left(\left.
	\int \varphi(x)
	\tilde{\xi}_{n}^{\theta}(dx)
	\right|\boldsymbol Y
	\right)
	\right|
	\leq
	\frac{\tilde{C}_{1,Q}}{N},
	\\
	&\label{l2.5.753.b}
	\left\|
	E\left(\left.
	\int \varphi(x)
	\tilde{\zeta}_{n}^{\theta}(dx)
	\right|\boldsymbol Y
	\right)
	\right\|
	\leq
	\frac{\tilde{C}_{1,Q}}{N}
\end{align}
almost surely.
Similarly, using \cite[Theorem 3.1]{delmoral&doucet&singh}, \cite[Theorem 5.8]{legland&oudjane}
(or \cite[Proposition 6.4]{tadic&doucet4}),
we deduce that there exists a real number $\tilde{C}_{2,Q}\in[1,\infty)$
(independent of $N$ and depending only on $p_{\theta}(x'|x)$,
$q_{\theta}(y|x)$) such that
\begin{align}
	&\label{l2.5.755.a}
	E\left(\left.
	\left|
	\int \varphi(x)
	\tilde{\xi}_{n}^{\theta}(dx)
	\right|^{2}
	\right|\boldsymbol Y
	\right)
	\leq
	\frac{\tilde{C}_{2,Q}}{N},
	\\
	&\label{l2.5.755.b}
	E\left(\left.
	\left\|
	\int \varphi(x)
	\tilde{\zeta}_{n}^{\theta}(dx)
	\right\|^{2}
	\right|\boldsymbol Y
	\right)
	\leq
	\frac{\tilde{C}_{2,Q}}{N}
\end{align}
almost surely.

It is straightforward to verify
\begin{align}\label{l2.5.1}
	H_{\theta,y}(\xi,\zeta)
	=
	H'_{\theta,y}(\xi,\zeta) + H''_{\theta,y}(\xi,\zeta)
\end{align}
($H_{\theta,y}(\xi,\zeta)$ is defined in (\ref{1.3*b})).
It is also easy to show
\begin{align}\label{l2.5.757}
	\frac{1}{A_{1,n}^{\theta} }
	=&
	\frac{1}{A_{3,n}^{\theta} }
	-
	\frac{A_{2,n}^{\theta} }{|A_{3,n}^{\theta} |^{2} }
	+
	\frac{1}{A_{1,n}^{\theta} }
	\left|
	\frac{A_{2,n}^{\theta} }{A_{3,n}^{\theta} }
	\right|^{2}.
\end{align}
Relying on (\ref{l2.5.1}), we conclude
\begin{align}\label{l2.5.3}
	H(\theta,Z_{n}^{\theta} )
	=&
	\frac{\sum_{i=1}^{N} 
	q_{\theta}(Y_{n}|\hat{X}_{n,i}^{\theta} )
	\left( W_{n,i}^{\theta} - N^{-1} \sum_{j=1}^{N} W_{n,j}^{\theta} \right) }
	{\sum_{i=1}^{N} q_{\theta}(Y_{n}|\hat{X}_{n,i}^{\theta} ) }
	\nonumber\\
	&+
	\frac{\sum_{i=1}^{N}
	\nabla_{\theta} q_{\theta}(Y_{n}|\hat{X}_{n,i}^{\theta} ) }
	{\sum_{i=1}^{N} q_{\theta}(Y_{n}|\hat{X}_{n,i}^{\theta} ) }
	\nonumber\\
	=&
	\frac{\int q_{\theta}(Y_{n}|x) \zeta_{n}^{\theta}(dx)
	+
	\int \nabla_{\theta} q_{\theta}(Y_{n}|x) \xi_{n}^{\theta}(dx) }
	{\int q_{\theta}(Y_{n}|x) \xi_{n}^{\theta}(dx) }
	\nonumber\\
	=&
	H_{\theta,Y_{n} }(\xi_{n}^{\theta}, \zeta_{n}^{\theta} ).
\end{align}
Using (\ref{l2.5.757}), we also deduce
\begin{align}
	&\label{l2.5.7}
	H'_{\theta,Y_{n} }(\xi_{n}^{\theta},\zeta_{n}^{\theta} )
	-
	H'_{\theta,Y_{n} }(F_{\theta,\boldsymbol Y}^{0:n}(\xi_{0}^{\theta} ),
	G_{\theta,\boldsymbol Y}^{0:n}(\xi_{0}^{\theta}, \zeta_{0}^{\theta} ) )
	\nonumber\\
	&=
	\frac{B_{2,n}^{\theta} }{A_{3,n}^{\theta} }
	-
	\frac{A_{2,n}^{\theta} B_{2,n}^{\theta} }{|A_{3,n}^{\theta} |^{2} }
	-
	\frac{A_{2,n}^{\theta} B_{3,n}^{\theta} }{|A_{3,n}^{\theta} |^{2} }
	+
	\frac{B_{1,n}^{\theta} }{A_{1,n}^{\theta} }
	\left|
	\frac{A_{2,n}^{\theta} }{A_{3,n}^{\theta} }
	\right|^{2},
	\\
	&\label{l2.5.5}
	H''_{\theta,Y_{n} }(\xi_{n}^{\theta},\zeta_{n}^{\theta} )
	-
	H''_{\theta,Y_{n} }(F_{\theta,\boldsymbol Y}^{0:n}(\xi_{0}^{\theta} ),
	G_{\theta,\boldsymbol Y}^{0:n}(\xi_{0}^{\theta}, \zeta_{0}^{\theta} ) )
	\nonumber\\
	&=
	\frac{C_{2,n}^{\theta} }{A_{3,n}^{\theta} }
	-
	\frac{A_{2,n}^{\theta} C_{2,n}^{\theta} }{|A_{3,n}^{\theta} |^{2} }
	-
	\frac{A_{2,n}^{\theta} C_{3,n}^{\theta} }{|A_{3,n}^{\theta} |^{2} }
	+
	\frac{C_{1,n}^{\theta} }{A_{1,n}^{\theta} }
	\left|
	\frac{A_{2,n}^{\theta} }{A_{3,n}^{\theta} }
	\right|^{2}.
\end{align}
Moreover, due to Assumption \ref{a3}, we have
\begin{align}
	&\label{l2.5.51.a}
	A_{1,n}^{\theta}
	=
	\int q_{\theta}(Y_{n}|x) \xi_{n}^{\theta}(dx)
	\geq
	\varepsilon_{Q},
	\\
	&\label{l2.5.51.b}
	A_{3,n}^{\theta}
	=
	\int q_{\theta}(Y_{n}|x) F_{\theta,\boldsymbol Y}^{0:n}(dx|\xi_{0}^{\theta} )
	\geq
	\varepsilon_{Q}.
\end{align}
Similarly, owing to Assumption \ref{a4}, we have
\begin{align}
	&\label{l2.5.53.a}
	\|B_{1,n}^{\theta} \|
	\leq
	\int \|\nabla_{\theta} q_{\theta}(Y_{n}|x) \| \xi_{n}^{\theta}(dx)
	\leq
	K_{1,Q},
	\\
	&\label{l2.5.53.b}
	\|B_{3,n}^{\theta} \|
	\leq
	\int \|\nabla_{\theta} q_{\theta}(Y_{n}|x) \|
	F_{\theta,\boldsymbol Y}^{0:n}(dx|\xi_{0}^{\theta} )
	\leq
	K_{1,Q}.
\end{align}
Since $\|\zeta_{n}^{\theta}\|\leq\tilde{C}_{1,Q}$,
Assumption \ref{a3} and Lemma \ref{lemma1.3} yield
\begin{align}
	&\label{l2.5.55.a}
	\left\|C_{1,n}^{\theta} \right\|
	\leq
	\int q_{\theta}(Y_{n}|x) |\zeta_{n}^{\theta}|(dx)
	\leq
	\frac{\tilde{C}_{1,Q} }{\varepsilon_{Q} },
	\\
	&\label{l2.5.55.b}
	\|C_{3,n}^{\theta} \|
	\leq
	\int q_{\theta}(Y_{n}|x)
	\big|G_{\theta,\boldsymbol Y}^{0:n}\big|(dx|\xi_{0}^{\theta},\zeta_{0}^{\theta} )
	\leq
	\frac{C_{1,Q} }{\varepsilon_{Q} }.
\end{align}

Let $\tilde{C}_{3,Q}=\varepsilon_{Q}^{-4}\sqrt{d}\tilde{C}_{1,Q}C_{1,Q}K_{1,Q}$.
Due to Assumptions \ref{a3}, \ref{a4}, we have
\begin{align*}
	0\leq \varepsilon_{Q} q_{\theta}(Y_{n}|x) \leq 1,
	\;\;\;\;\;
	\left\|\frac{\nabla_{\theta} q_{\theta}(Y_{n}|x) }{K_{1,Q} } \right\| \leq 1.
\end{align*}
Then, using (\ref{l2.5.753.a}), (\ref{l2.5.753.b}), we conclude
\begin{align}
	&\label{l2.5.151}
	\begin{aligned}[b]
	\left| E\left(\left. A_{2,n}^{\theta} \right|\boldsymbol Y\right)\right|
	=&
	\left|
	E\left(\left.
	\int q_{\theta}(Y_{n}|x)
	\tilde{\xi}_{n}^{\theta}(dx)
	\right|\boldsymbol Y
	\right)
	\right|
	\\
	\leq&
	\frac{\tilde{C}_{1,Q}}{\varepsilon_{Q} N },
	\end{aligned}
	\\
	&\label{l2.5.153}
	\begin{aligned}[b]
	\left\| E\left(\left. B_{2,n}^{\theta} \right|\boldsymbol Y\right)\right\|
	=&
	\left\|
	E\left(\left.
	\int \nabla_{\theta} q_{\theta}(Y_{n}|x)
	\tilde{\xi}_{n}^{\theta}(dx)
	\right|\boldsymbol Y
	\right)
	\right\|
	\\
	\leq&
	\frac{\sqrt{d} \tilde{C}_{1,Q}K_{1,Q}}{N},
	\end{aligned}
	\\
	&\label{l2.5.155}
	\begin{aligned}[b]
	\left\| E\left(\left. C_{2,n}^{\theta} \right|\boldsymbol Y\right)\right\|
	=&
	\left\|
	E\left(\left.
	\int q_{\theta}(Y_{n}|x)
	\tilde{\zeta}_{n}^{\theta}(dx)
	\right|\boldsymbol Y
	\right)
	\right\|
	\\
	\leq&
	\frac{\tilde{C}_{1,Q}}{\varepsilon_{Q} N }
	\end{aligned}
\end{align}
almost surely.
As $A_{3,n}^{\theta}$ is measurable with respect to $\boldsymbol Y$,
(\ref{l2.5.51.b}), (\ref{l2.5.153}), (\ref{l2.5.155}) imply
\begin{align}
	&\label{l2.5.157}
	\begin{aligned}[b]
	\left\|
	E\left(
	\frac{B_{2,n}^{\theta} }{A_{3,n}^{\theta} }
	\right)
	\right\|
	\leq&
	E\left(
	\frac{\left\|E\left(\left.
	B_{2,n}^{\theta} \right|\boldsymbol Y \right)\right\| }
	{A_{3,n}^{\theta} }
	\right)
	\\
	\leq&
	\frac{\sqrt{d}\tilde{C}_{1,Q}K_{1,Q} }{\varepsilon_{Q} N }
	\leq
	\frac{\tilde{C}_{3,Q} }{N},
	\end{aligned}
	\\
	&\label{l2.5.159}
	\begin{aligned}[b]
	\left\|
	E\left(
	\frac{C_{2,n}^{\theta} }{A_{3,n}^{\theta} }
	\right)
	\right\|
	\leq&
	E\left(
	\frac{\left\|E\left(\left.
	C_{2,n}^{\theta} \right|\boldsymbol Y \right)\right\| }
	{A_{3,n}^{\theta} }
	\right)
	\\
	\leq&
	\frac{\tilde{C}_{1,Q} }{\varepsilon_{Q}^{2} N }
	\leq
	\frac{\tilde{C}_{3,Q} }{N}.
	\end{aligned}
\end{align}
Since $A_{3,n}^{\theta}$, $B_{3,n}^{\theta}$, $C_{3,n}^{\theta}$
are measurable with respect to $\boldsymbol Y$,
(\ref{l2.5.51.b}), (\ref{l2.5.53.b}), (\ref{l2.5.55.b}), (\ref{l2.5.151}) yield
\begin{align}
	&\label{l2.5.161}
	\begin{aligned}[b]
	\left\|
	E\left(
	\frac{A_{2,n}^{\theta}B_{3,n}^{\theta} }
	{\left|A_{3,n}^{\theta} \right|^{2} }
	\right)
	\right\|
	\leq&
	E\left(
	\frac{\left|E\left(\left.
	A_{2,n}^{\theta} \right|\boldsymbol Y \right) \right|
	\left\|B_{3,n}^{\theta} \right\| }
	{\left|A_{3,n}^{\theta} \right|^{2} }
	\right)
	\\
	\leq&
	\frac{\tilde{C}_{1,Q}K_{1,Q} }{\varepsilon_{Q}^{3} N }
	\leq
	\frac{\tilde{C}_{3,Q} }{N},
	\end{aligned}
	\\
	&\label{l2.5.163}
	\begin{aligned}[b]
	\left\|
	E\left(
	\frac{A_{2,n}^{\theta}C_{3,n}^{\theta} }
	{\left|A_{3,n}^{\theta} \right|^{2} }
	\right)
	\right\|
	\leq&
	E\left(
	\frac{\left|E\left(\left.
	A_{2,n}^{\theta} \right|\boldsymbol Y \right) \right|
	\left\|C_{3,n}^{\theta} \right\| }
	{\left|A_{3,n}^{\theta} \right|^{2} }
	\right)
	\\
	\leq&
	\frac{\tilde{C}_{1,Q}C_{1,Q} }{\varepsilon_{Q}^{4} N }
	\leq
	\frac{\tilde{C}_{3,Q} }{N}.
	\end{aligned}
\end{align}

Let $\tilde{C}_{4,Q}=\sqrt{d}\varepsilon_{Q}^{-4}\tilde{C}_{2,Q}K_{1,Q}$,
$\tilde{C}_{5,Q}=\varepsilon_{Q}^{-2}\tilde{C}_{1,Q}\tilde{C}_{4,Q}K_{1,Q}$.
Relying on Assumptions \ref{a3}, \ref{a4} and (\ref{l2.5.755.a}), (\ref{l2.5.755.b}), 
we deduce
\begin{align}
	&\label{l2.5.251}
	\begin{aligned}[b]
	E\left(\left. |A_{2,n}^{\theta} |^{2} \right|\boldsymbol Y\right)
	=&
	E\left(\left.
	\left|
	\int q_{\theta}(Y_{n}|x)
	\tilde{\xi}_{n}^{\theta}(dx)
	\right|^{2}
	\right|\boldsymbol Y
	\right)
	\\
	\leq&
	\frac{\tilde{C}_{2,Q}}{\varepsilon_{Q}^{2} N },
	\end{aligned}
	\\
	&\label{l2.5.253}
	\begin{aligned}[b]
	E\left(\left. \|B_{2,n}^{\theta} \|^{2} \right|\boldsymbol Y\right)
	=&
	E\left(\left.
	\left\|
	\int \nabla_{\theta} q_{\theta}(Y_{n}|x)
	\tilde{\xi}_{n}^{\theta}(dx)
	\right\|^{2}
	\right|\boldsymbol Y
	\right)
	\\
	\leq&
	\frac{d \tilde{C}_{2,Q}K_{1,Q}^{2} }{N},
	\end{aligned}
	\\
	&\label{l2.5.255}
	\begin{aligned}[b]
	E\left(\left. \|C_{2,n}^{\theta} \|^{2} \right|\boldsymbol Y\right)
	=&
	E\left(\left.
	\left\|
	\int q_{\theta}(Y_{n}|x)
	\tilde{\zeta}_{n}^{\theta}(dx)
	\right\|^{2}
	\right|\boldsymbol Y
	\right)
	\\
	\leq&
	\frac{\tilde{C}_{2,Q}}{\varepsilon_{Q}^{2} N }
	\end{aligned}
\end{align}
almost surely.
Then, H\"{o}lder inequality and
(\ref{l2.5.51.b}), (\ref{l2.5.251}) -- (\ref{l2.5.255}) imply
\begin{align}
	&\label{l2.5.257}
	\begin{aligned}[b]
	\left\|
	E\left(
	\frac{A_{2,n}^{\theta}B_{2,n}^{\theta} }
	{\left|	A_{3,n}^{\theta} \right|^{2} }
	\right)
	\right\|
	\leq&
	\left(
	E\left(
	\frac{E\left(\left.
	|A_{2,n}^{\theta}|^{2}\right|\boldsymbol Y \right) }
	{\left|	A_{3,n}^{\theta} \right|^{2} }
	\right)
	\right)^{1/2}
	\\
	&\cdot 
	\left(
	E\left(
	\frac{E\left(\left.
	\|B_{2,n}^{\theta}\|^{2}\right|\boldsymbol Y \right) }
	{\left|	A_{3,n}^{\theta} \right|^{2} }
	\right)
	\right)^{1/2}
	\\
	\leq&
	\frac{\sqrt{d}\tilde{C}_{2,Q}K_{1,Q} }{\varepsilon_{Q}^{3} N }
	\leq
	\frac{\tilde{C}_{4,Q}}{N},
	\end{aligned}
	\\
	&\label{l2.5.259}
	\begin{aligned}[b]
	\left\|
	E\left(
	\frac{A_{2,n}^{\theta}C_{2,n}^{\theta} }
	{\left|	A_{3,n}^{\theta} \right|^{2} }
	\right)
	\right\|
	\leq&
	\left(
	E\left(
	\frac{E\left(\left.
	|A_{2,n}^{\theta}|^{2}\right|\boldsymbol Y \right) }
	{\left|	A_{3,n}^{\theta} \right|^{2} }
	\right)
	\right)^{1/2}
	\\
	&\cdot 
	\left(
	E\left(
	\frac{E\left(\left.
	\|C_{2,n}^{\theta}\|^{2}\right|\boldsymbol Y \right) }
	{\left|	A_{3,n}^{\theta} \right|^{2} }
	\right)
	\right)^{1/2}
	\\
	\leq&
	\frac{\tilde{C}_{2,Q}}{\varepsilon_{Q}^{4} N }
	\leq
	\frac{\tilde{C}_{4,Q}}{N}.
	\end{aligned}
\end{align}
Moreover, due to (\ref{l2.5.51.b}), (\ref{l2.5.251}), we have
\begin{align}\label{l2.5.261}
	E\left(
	\left|
	\frac{A_{2,n}^{\theta} }{A_{3,n}^{\theta}  }
	\right|^{2}
	\right)
	=
	E\left(
	\frac{E\left(\left.
	|A_{2,n}^{\theta}|^{2}\right|\boldsymbol Y \right) }
	{\left|	A_{3,n}^{\theta} \right|^{2} }
	\right)
	\leq
	\frac{\tilde{C}_{2,Q}}{\varepsilon_{Q}^{4} N}
	\leq
	\frac{\tilde{C}_{4,Q}}{N}.
\end{align}
Owing to (\ref{l2.5.51.a}), (\ref{l2.5.53.a}), (\ref{l2.5.55.a}), (\ref{l2.5.261}), we also have
\begin{align}
	&\label{l2.5.353}
	\begin{aligned}[b]
	&
	\left\|
	E\left(
	\frac{B_{1,n}^{\theta} }{A_{1,n}^{\theta} }
	\left|
	\frac{A_{2,n}^{\theta} }{A_{3,n}^{\theta} }
	\right|^{2}
	\right)
	\right\|
	\leq
	E\left(
	\frac{\left\|B_{1,n}^{\theta} \right\|}{A_{1,n}^{\theta} }
	\left|
	\frac{A_{2,n}^{\theta} }{A_{3,n}^{\theta} }
	\right|^{2}
	\right)
	\\
	&\leq
	\frac{K_{1,Q}}{\varepsilon_{Q} }
	E\left(
	\left|
	\frac{A_{2,n}^{\theta} }{A_{3,n}^{\theta} }
	\right|^{2}
	\right)
	\leq
	\frac{\tilde{C}_{4,Q}K_{1,Q}}{\varepsilon_{Q} N}
	\leq
	\frac{\tilde{C}_{5,Q} }{N},
	\end{aligned}
	\\
	&\label{l2.5.355}
	\begin{aligned}[b]
	&
	\left\|
	E\left(
	\frac{C_{1,n}^{\theta} }{A_{1,n}^{\theta} }
	\left|
	\frac{A_{2,n}^{\theta} }{A_{3,n}^{\theta} }
	\right|^{2}
	\right)
	\right\|
	\leq
	E\left(
	\frac{\left\|C_{1,n}^{\theta} \right\|}{A_{1,n}^{\theta} }
	\left|
	\frac{A_{2,n}^{\theta} }{A_{3,n}^{\theta} }
	\right|^{2}
	\right)
	\\
	&\leq
	\frac{\tilde{C}_{1,Q}}{\varepsilon_{Q}^{2} }
	E\left(
	\left|
	\frac{A_{2,n}^{\theta} }{A_{3,n}^{\theta} }
	\right|^{2}
	\right)
	\leq
	\frac{\tilde{C}_{1,Q}\tilde{C}_{4,Q}}{\varepsilon_{Q}^{2} N}
	\leq
	\frac{\tilde{C}_{5,Q} }{N}.
	\end{aligned}
\end{align}

Let $M_{Q}=4(\tilde{C}_{3,Q}+\tilde{C}_{4,Q}+\tilde{C}_{5,Q})$.
Then, (\ref{l2.5.7}), (\ref{l2.5.157}), (\ref{l2.5.161}), (\ref{l2.5.257}), (\ref{l2.5.353})
imply
\begin{align*}
	&
	\left\|
	E\left(
	H'_{\theta,Y_{n} }(\xi_{n}^{\theta},\zeta_{n}^{\theta} )
	-
	H'_{\theta,Y_{n} }(F_{\theta,\boldsymbol Y}^{0:n}(\xi_{0}^{\theta} ),
	G_{\theta,\boldsymbol Y}^{0:n}(\xi_{0}^{\theta}, \zeta_{0}^{\theta} ) )
	\right)
	\right\|
	\\
	&\leq
	\frac{2\tilde{C}_{3,Q}+\tilde{C}_{4,Q}+\tilde{C}_{5,Q} }{N}
	\leq
	\frac{M_{Q} }{2N}.
\end{align*}
Similarly, (\ref{l2.5.5}), (\ref{l2.5.159}), (\ref{l2.5.163}), (\ref{l2.5.259}), (\ref{l2.5.355})
yield
\begin{align*}
	&
	\left\|
	E\left(
	H''_{\theta,Y_{n} }(\xi_{n}^{\theta},\zeta_{n}^{\theta} )
	-
	H''_{\theta,Y_{n} }(F_{\theta,\boldsymbol Y}^{0:n}(\xi_{0}^{\theta} ),
	G_{\theta,\boldsymbol Y}^{0:n}(\xi_{0}^{\theta}, \zeta_{0}^{\theta} ) )
	\right)
	\right\|
	\\
	&\leq
	\frac{2\tilde{C}_{3,Q}+\tilde{C}_{4,Q}+\tilde{C}_{5,Q} }{N}
	\leq
	\frac{M_{Q} }{2N}.
\end{align*}
Combining this with (\ref{l2.5.1}), (\ref{l2.5.3}), we get
\begin{align*}
	&
	\left\|
	E\left(
	H(\theta,Z_{n}^{\theta} )
	-
	H_{\theta,Y_{n} }(F_{\theta,\boldsymbol Y}^{0:n}(\xi_{0}^{\theta} ),
	G_{\theta,\boldsymbol Y}^{0:n}(\xi_{0}^{\theta}, \zeta_{0}^{\theta} ) )
	\right)
	\right\|
	\\
	&
	\begin{aligned}
	\leq& 
	\left\|
	E\left(
	H'_{\theta,Y_{n} }(\xi_{n}^{\theta},\zeta_{n}^{\theta} )
	-
	H'_{\theta,Y_{n} }(F_{\theta,\boldsymbol Y}^{0:n}(\xi_{0}^{\theta} ),
	G_{\theta,\boldsymbol Y}^{0:n}(\xi_{0}^{\theta}, \zeta_{0}^{\theta} ) )
	\right)
	\right\|
	\\
	&+
	\left\|
	E\left(
	H''_{\theta,Y_{n} }(\xi_{n}^{\theta},\zeta_{n}^{\theta} )
	-
	H''_{\theta,Y_{n} }(F_{\theta,\boldsymbol Y}^{0:n}(\xi_{0}^{\theta} ),
	G_{\theta,\boldsymbol Y}^{0:n}(\xi_{0}^{\theta}, \zeta_{0}^{\theta} ) )
	\right)
	\right\|
	\end{aligned}
	\\
	&\leq 
	\frac{M_{Q} }{N}.
\end{align*}
Hence, we have
\begin{align*}
	&
	\|h(\theta) - \nabla l(\theta) \|
	\\
	&
	\begin{aligned}
	\leq &
	\left\|
	E\left(	H(\theta,Z_{n}^{\theta} ) \right)
	-
	h(\theta)
	\right\|
	\\
	&+
	\left\|
	E\left(
	H_{\theta,Y_{n} }(F_{\theta,\boldsymbol Y}^{0:n}(\xi_{0}^{\theta} ),
	G_{\theta,\boldsymbol Y}^{0:n}(\xi_{0}^{\theta}, \zeta_{0}^{\theta} ) )
	\right)
	-
	\nabla l(\theta )
	\right\|
	\\
	&+
	\left\|
	E\left(
	H(\theta,Z_{n}^{\theta} )
	-
	H_{\theta,Y_{n} }(F_{\theta,\boldsymbol Y}^{0:n}(\xi_{0}^{\theta} ),
	G_{\theta,\boldsymbol Y}^{0:n}(\xi_{0}^{\theta}, \zeta_{0}^{\theta} ) )
	\right)
	\right\|
	\end{aligned}
	\\
	&
	\begin{aligned}
	\leq &
	\left\|
	E\left(	H(\theta,Z_{n}^{\theta} ) \right)
	-
	h(\theta)
	\right\|
	\\
	&+
	\left\|
	E\left(
	H_{\theta,Y_{n} }(F_{\theta,\boldsymbol Y}^{0:n}(\xi_{0}^{\theta} ),
	G_{\theta,\boldsymbol Y}^{0:n}(\xi_{0}^{\theta}, \zeta_{0}^{\theta} ) )
	\right)
	-
	\nabla l(\theta )
	\right\|
	+
	\frac{M_{Q}}{N}.
	\end{aligned}
\end{align*}
Then, letting $n\rightarrow\infty$ and using Lemmas \ref{lemma1.3}, \ref{lemma2.4}, we conclude
that (\ref{l2.5.1*}) holds.
\end{proof}

\section{Proof of Main Results}\label{section3*}

In this section, we study the Monte Carlo estimation of the log-likelihood rate gradient
$\nabla l(\theta)$ (see Lemma \ref{lemma3.2}).
We also study the analytical properties of $l(\theta)$
(see Lemma \ref{lemma3.3}).
Using these results (together with the results of \cite{tadic&doucet21} -- \cite{tadic&doucet3}), we prove
Theorem \ref{theorem2}.

Throughout the section, the following notation is used.
$\{\zeta_{n} \}_{n\geq 0}$, $\{\eta_{n} \}_{n\geq 0}$ and $\{\xi_{n} \}_{n\geq 0}$
are the stochastic processes defined by
\begin{align*}
	&
	\zeta_{n}
	=
	H(\theta_{n}, Z_{n+1} ) - h(\theta_{n} ),
	\\
	&
	\eta_{n} = h(\theta_{n}) - \nabla l(\theta_{n}),
	\\
	&
	\xi_{n} = \zeta_{n}+\eta_{n}
\end{align*}
for $n\geq 0$
($H(\theta,z)$, $h(\theta)$, $\{Z_{n} \}_{n\geq 0}$
are specified in (\ref{2*.401}), (\ref{2*.403}), (\ref{2*.405})).
Then, using (\ref{2*.9}), it is easy to show that (\ref{3*.1}) holds for each $n\geq 0$.

\begin{remark}
Due to (\ref{3*.1}),
algorithm (\ref{1.1}) -- (\ref{1.5}) is stochastic gradient search which maximizes
log-likelihood rate $l(\theta)$,
while $\{\xi_{n} \}_{n\geq 0}$ can be interpreted as noise
in the (Monte Carlo) estimation of $\nabla l(\theta)$.
We also recall here that $Q$ is any compact set satisfying $Q\subset\Theta$.
\end{remark}

\begin{lemma}\label{lemma3.2}
Let Assumptions \ref{a1} -- \ref{a4} hold.
Then, relations
\begin{align}\label{l3.2.1*}
	\lim_{n\rightarrow\infty}
	\sup_{k\geq n}
	\left\| \sum_{i=n}^{k} \alpha_{i} \zeta_{i} \right\|
	=
	0,
	\;\;\;\;\;
	\limsup_{n\rightarrow\infty} \|\eta_{n} \|
	\leq
	\frac{M_{Q}}{N}
\end{align}
hold almost surely on $\Lambda_{Q}$ ($\Lambda_{Q}$ is defined in (\ref{1.301})).
\end{lemma}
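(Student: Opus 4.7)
\textbf{Proof proposal for Lemma \ref{lemma3.2}.}

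The second bound is immediate. On $\Lambda_Q$ one has $\theta_n\in Q$ for all sufficiently large $n$, so by Lemma \ref{lemma2.5}
\[
\|\eta_n\|=\|h(\theta_n)-\nabla l(\theta_n)\|\leq M_Q/N
\]
for all large $n$ almost surely, which gives $\limsup_n\|\eta_n\|\leq M_Q/N$ on $\Lambda_Q$. The substance of the lemma is therefore the statement about the weighted sums of the Markovian noise $\zeta_n=H(\theta_n,Z_{n+1})-h(\theta_n)$.

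The plan is to apply the Poisson-equation decomposition supplied by Lemma \ref{lemma2.4}(iii). Using $H(\theta,z)-h(\theta)=\tilde H(\theta,z)-(\Pi\tilde H)(\theta,z)$, and the Markov property (\ref{2*.7}) which yields $E[\tilde H(\theta_n,Z_{n+1})\mid\mathcal F_n]=(\Pi\tilde H)(\theta_n,Z_n)$, I would split
\[
\zeta_n=\underbrace{\bigl[\tilde H(\theta_n,Z_{n+1})-(\Pi\tilde H)(\theta_n,Z_n)\bigr]}_{=:M_{n+1}}+\bigl[(\Pi\tilde H)(\theta_n,Z_n)-(\Pi\tilde H)(\theta_n,Z_{n+1})\bigr],
\]
with $\{M_{n+1}\}$ a martingale-difference sequence. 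Abel summation applied to the second bracket rewrites $\sum_{i=n}^{k}\alpha_i\zeta_i$ as
\[
\sum_{i=n}^{k}\alpha_i M_{i+1}+\alpha_n(\Pi\tilde H)(\theta_n,Z_n)-\alpha_k(\Pi\tilde H)(\theta_k,Z_{k+1})+\sum_{i=n+1}^{k}\!\Bigl[\alpha_{i-1}\bigl((\Pi\tilde H)(\theta_i,Z_i)-(\Pi\tilde H)(\theta_{i-1},Z_i)\bigr)+(\alpha_i-\alpha_{i-1})(\Pi\tilde H)(\theta_i,Z_i)\Bigr].
\]

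To control each piece, I would localise on $\Lambda_Q$ as follows. Since $\Lambda_Q=\bigcup_{n_0\geq 0}A_{n_0}$ with $A_{n_0}=\bigcap_{k\geq n_0}\{\theta_k\in Q\}$, it suffices to prove the conclusion almost surely on each fixed $A_{n_0}$. On $A_{n_0}$, Lemma \ref{lemma2.4}(iv) gives $\|H(\theta_i,Z_{i+1})\|,\|\tilde H(\theta_i,Z_{i+1})\|,\|(\Pi\tilde H)(\theta_i,Z_i)\|\leq C_{8,Q}(1+\|W_i\Lambda\|+\|W_{i+1}\Lambda\|)$ for $i\geq n_0$, while Lemma \ref{lemma3.1}, applied to the process stopped/restarted at $n_0$, shows that $\|W_i\Lambda\|$ stays bounded in $i$ by a ($\omega$-dependent but a.s.\ finite) constant $\Gamma(\omega)$. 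Consequently, the martingale increments satisfy $E[\|M_{i+1}\|^2 I_{A_{n_0}}\mid\mathcal F_i]\leq C\,\Gamma^2$, and Assumption \ref{a1} ($\sum\alpha_i^2<\infty$) combined with the $L^2$ martingale convergence theorem (or Doob's maximal inequality) shows $\sup_{k\geq n}\bigl\|\sum_{i=n}^{k}\alpha_i M_{i+1}\bigr\|\to 0$ a.s. on $A_{n_0}$. The two endpoint terms $\alpha_n(\Pi\tilde H)(\theta_n,Z_n)$ and $\alpha_k(\Pi\tilde H)(\theta_k,Z_{k+1})$ vanish because $\alpha_n\to 0$ and $\|(\Pi\tilde H)\|$ is uniformly bounded on $A_{n_0}$. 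The $(\alpha_i-\alpha_{i-1})$-sum is absolutely convergent because $\sum|\alpha_i-\alpha_{i-1}|<\infty$ by Assumption \ref{a1} and $\|(\Pi\tilde H)(\theta_i,Z_i)\|$ is bounded on $A_{n_0}$. Finally, the Lipschitz bound in Lemma \ref{lemma2.4}(iv) together with $\|\theta_i-\theta_{i-1}\|=\alpha_{i-1}\|H(\theta_{i-1},Z_i)\|\leq C\alpha_{i-1}$ yields
\[
\alpha_{i-1}\bigl\|(\Pi\tilde H)(\theta_i,Z_i)-(\Pi\tilde H)(\theta_{i-1},Z_i)\bigr\|\leq C\,\alpha_{i-1}^2(1+\|W_i\Lambda\|),
\]
which is summable by $\sum\alpha_i^2<\infty$, so this sum also tends to zero as $n\to\infty$.

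The main obstacle is the passage between the deterministic-looking $\{\tau_Q=\infty\}$ (on which Lemmas \ref{lemma2.4} and \ref{lemma3.1} apply verbatim) and the event $\Lambda_Q$ of interest, where trajectories may visit $Q^c$ finitely often before settling in $Q$. I handle this by the localisation $\Lambda_Q=\bigcup_{n_0}A_{n_0}$ above: on each $A_{n_0}$, restarting the weight recursion (\ref{2*.1}) at time $n_0$ lets Lemma \ref{lemma3.1} bound $\|W_i\Lambda\|$ uniformly for $i\geq n_0$, with an initial-condition constant that is a.s.\ finite on $A_{n_0}$, and all subsequent estimates then go through with deterministic $Q$-dependent constants. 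A countable union over $n_0$ finally delivers the conclusion a.s.\ on $\Lambda_Q$.
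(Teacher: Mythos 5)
Your route is, up to indexing, the same as the paper's: the paper splits $\zeta_n$ into the martingale difference $\tilde H(\theta_n,Z_{n+1})-(\Pi\tilde H)(\theta_n,Z_n)$, the Lipschitz-in-$\theta$ increment $(\Pi\tilde H)(\theta_n,Z_n)-(\Pi\tilde H)(\theta_{n-1},Z_n)$, and an Abel-summed remainder controlled by $\sum_n|\alpha_n-\alpha_{n+1}|<\infty$ plus vanishing endpoint terms, exactly as in your display, and the second inequality in (\ref{l3.2.1*}) is obtained from Lemma \ref{lemma2.5} exactly as in your first paragraph.

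The one step that fails as written is your localisation. The set $A_{n_0}=\bigcap_{k\ge n_0}\{\theta_k\in Q\}$ is a tail event, so $I_{A_{n_0}}$ is not $\mathcal F_i$-measurable; hence $\alpha_i M_{i+1}I_{A_{n_0}}$ is not a martingale-difference array, and the bound $E[\|M_{i+1}\|^2 I_{A_{n_0}}\mid\mathcal F_i]\le C\Gamma^2$ cannot be fed into the $L^2$ martingale convergence theorem or Doob's inequality. (Without some adapted localisation the terms need not even be integrable, since the bounds of Lemma \ref{lemma2.4} are only local in $\theta$ and involve $\|W_i\Lambda\|$.) The standard repair, and what the paper actually does, is to localise with the adapted indicators $I_{\{\tau_Q>i\}}$, where $\tau_Q$ is the first exit time of $\{\theta_n\}$ from $Q$ (in your restarted set-up, $\tau_Q^{(n_0)}=\inf\{k\ge n_0:\theta_k\notin Q\}$): since $\{\tau_Q>i\}\in\mathcal F_i$, the stopped increments remain martingale differences, Lemmas \ref{lemma2.4} and \ref{lemma3.1} bound them by a constant multiple of $1+\|W_0\Lambda\|$ (an $\mathcal F_0$-measurable quantity), Doob's theorem gives almost sure convergence of the stopped series, and one then restricts to $\{\tau_Q=\infty\}$, respectively $A_{n_0}\subseteq\{\tau_Q^{(n_0)}=\infty\}$, where the stopped and unstopped series coincide. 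You gesture at this (``the process stopped/restarted at $n_0$''), and with that substitution your argument becomes the paper's; the only real difference of route is that the paper never restarts at $n_0$ — it proves convergence on $\bigcap_{n\ge 0}\{\theta_n\in Q\}$ for every compact $Q$ and recovers $\Lambda_Q$ through an exhaustion of $\Theta$ — whereas your countable union over $n_0$ is an equally valid, arguably more direct, way to pass from the stopped statement to $\Lambda_Q$, provided the adapted indicator is used.
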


\begin{proof}
Let $\tau_{Q}$ be the stopping time defined in Lemma \ref{lemma3.1}.
Moreover, let $\tilde{\Lambda}_{Q}$ be the event defined by
$\tilde{\Lambda}_{Q}=\bigcap_{n=0}^{\infty} \{\theta_{n}\in Q\}$.
Hence, on $\Lambda_{Q}$, $\theta_{n}\in Q$ for all, but finitely many $n\geq 0$.
Then, using Lemma \ref{lemma2.5}, we conclude that the second part of (\ref{l3.2.1*})
holds almost surely on $\Lambda_{Q}$.

Throughout the rest of the proof, the following notation is used.
${\cal F}_{k}$ is the $\sigma$-algebra defined by
${\cal F}_{k}=\sigma\{\theta_{0},Z_{0},\cdots,\theta_{k}, Z_{k} \}$ for $k\geq 0$.
$n$ is any positive integer,
while $\zeta_{1,n}$, $\zeta_{2,n}$ and $\zeta_{3,n}$ are the random variables defined by
\begin{align*}
	&
	\zeta_{1,n}
	=
	\tilde{H}(\theta_{n},Z_{n+1}) - (\Pi\tilde{H})(\theta_{n},Z_{n}),
	\\
	&
	\zeta_{2,n}
	=
	(\Pi\tilde{H})(\theta_{n},Z_{n})
	-
	(\Pi\tilde{H})(\theta_{n-1},Z_{n}),
	\\
	&
	\zeta_{3,n}
	=
	-(\Pi\tilde{H})(\theta_{n},Z_{n+1}).
\end{align*}
Then, for $l\geq k>1$, it is straightforward to verify
\begin{align}\label{l3.2.7}
	\sum_{i=k}^{l} \alpha_{i}\zeta_{i}
	=&
	\sum_{i=k}^{l} \alpha_{i}\zeta_{1,i}
	+
	\sum_{i=k}^{l} \alpha_{i}\zeta_{2,i}
	+
	\sum_{i=k}^{l} (\alpha_{i}-\alpha_{i+1} ) \zeta_{3,i}
	\nonumber\\
	&+
	\alpha_{l+1}\zeta_{3,l}
	-
	\alpha_{k}\zeta_{3,k-1}.
\end{align}

As a direct consequence of Lemmas \ref{lemma2.4}, \ref{lemma3.1}, we have
\begin{align*}
	\|\zeta_{1,n}\| I_{ \{\tau_{Q}>n \} }
	\leq&
	C_{5,Q}(2+\|W_{n}\Lambda\| + \|W_{n+1}\Lambda\| ) I_{ \{\tau_{Q}>n \} }
	\\
	\leq&
	4C_{5,Q}C_{6,Q}(1+\|W_{0}\Lambda\|).
\end{align*}
Since $W_{0}$ is measurable with respect to ${\cal F}_{0}$,
Assumption \ref{a1} yields
\begin{align}\label{l3.2.1}
	&
	E\left(\left.
	\sum_{n=1}^{\infty} \alpha_{n}^{2} \|\zeta_{1,n} \|^{2} I_{ \{\tau_{Q}>n \} }
	\right|{\cal F}_{0}
	\right)
	\nonumber\\
	&\leq
	16C_{5,Q}^{2}C_{6,Q}^{2} (1+\|W_{0}\Lambda\| )^{2}
	\left( \sum_{n=0}^{\infty} \alpha_{n}^{2} \right)
	<\infty
\end{align}
almost surely.
As $\{\tau_{Q}>n \}\in{\cal F}_{n}$, we also have
\begin{align*}
	&
	E\left(\left. \zeta_{1,n} I_{ \{\tau_{Q}>n \} } \right|{\cal F}_{n} \right)
	\\
	&=
	\left(
	E\left(\left. \tilde{H}(\theta_{n},Z_{n+1}) \right|{\cal F}_{n} \right)
	-
	(\Pi\tilde{H})(\theta_{n},Z_{n})
	\right)
	I_{ \{\tau_{Q}>n \} }
	=
	0
\end{align*}
almost surely.
Then, Doob theorem and (\ref{l3.2.1}) imply that
$\sum_{n=1}^{\infty} \alpha_{n}\zeta_{1,n} I_{ \{\tau_{Q}>n \} }$ is almost surely convergent.
Since $\tilde{\Lambda}_{Q}\subseteq\{\tau_{Q}>n\}$,
$\sum_{n=1}^{\infty} \alpha_{n}\zeta_{1,n}$ converges almost surely on $\tilde{\Lambda}_{Q}$.

Due to Lemmas \ref{lemma2.4}, \ref{lemma3.1} and (\ref{2*.9}), we have
\begin{align*}
	\|\zeta_{2,n}\| I_{ \{\tau_{Q}>n \} }
	\leq &
	C_{5,Q}\|\theta_{n}-\theta_{n-1}\|(1+\|W_{n}\Lambda\| ) I_{ \{\tau_{Q}>n \} }
	\\
	= &
	\begin{aligned}[t]
	&
	C_{5,Q}\alpha_{n-1} \|H(\theta_{n-1},Z_{n})\|
	\\
	&\cdot 
	(1+\|W_{n}\Lambda\| ) I_{ \{\tau_{Q}>n \} }
	\end{aligned}
	\\
	\leq &
	C_{5,Q}^{2}\alpha_{n-1} (1+\|W_{n}\Lambda\|)^{2} I_{ \{\tau_{Q}>n \} }
	\\
	\leq &
	4C_{5,Q}^{2}C_{6,Q}^{2}\alpha_{n-1}(1+\|W_{0}\Lambda\|)^{2}.
\end{align*}
Combining this with Assumption \ref{a1}, we get
\begin{align*}
	&
	\sum_{n=1}^{\infty} \alpha_{n}\|\zeta_{2,n}\| I_{ \{\tau_{Q}>n \} }
	\\
	&\leq 
	4C_{5,Q}^{2}C_{6,Q}^{2}(1+\|W_{0}\Lambda\| )^{2}
	\left(\sum_{n=0}^{\infty} \alpha_{n}\alpha_{n+1} \right)
	\\
	&\leq 
	2C_{5,Q}^{2}C_{6,Q}^{2}(1+\|W_{0}\Lambda\| )^{2}
	\left(\sum_{n=0}^{\infty} \alpha_{n}^{2} \right)
	<
	\infty.
\end{align*}
Hence, $\sum_{n=0}^{\infty} \alpha_{n}\zeta_{2,n} I_{ \{\tau_{Q}>n \} }$ converges almost surely.
Therefore, $\sum_{n=0}^{\infty} \alpha_{n}\zeta_{2,n}$ is convergent almost surely
on $\tilde{\Lambda}_{Q}$.

As a direct consequence of Lemmas \ref{lemma2.4}, \ref{lemma3.1}, we have
\begin{align*}
	\|\zeta_{3,n}\| I_{ \{\tau_{Q}>n \} }
	\leq&
	C_{5,Q}(1+\|W_{n+1}\Lambda\| ) I_{ \{\tau_{Q}>n \} }
	\\
	\leq&
	2C_{5,Q}C_{6,Q}(1+\|W_{0}\Lambda\| ).
\end{align*}
Consequently, Assumption \ref{a1} yields
\begin{align}
	&\label{l3.2.5}
	\sum_{n=1}^{\infty} \alpha_{n+1}^{2} \|\zeta_{3,n}\|^{2} I_{ \{\tau_{Q}>n \} }
	\nonumber\\
	&\leq
	4C_{5,Q}^{2}C_{6,Q}^{2}(1+\|W_{0}\Lambda\| )^{2}
	\left( \sum_{n=0}^{\infty}\alpha_{n}^{2} \right)
	<
	\infty,
	\\
	&\label{l3.2.3}
	\sum_{n=1}^{\infty} |\alpha_{n}-\alpha_{n+1} | \|\zeta_{3,n}\| I_{ \{\tau_{Q}>n \} }
	\nonumber\\
	&\leq
	2C_{5,Q}C_{6,Q} (1+\|W_{0}\Lambda\| )
	\left( \sum_{n=0}^{\infty} |\alpha_{n}-\alpha_{n+1} | \right)
	<
	\infty.
\end{align}
Therefore, we have
\begin{align*}
	\lim_{n\rightarrow\infty} \alpha_{n+1}\zeta_{3,n} I_{ \{\tau_{Q}>n \} }=0
\end{align*}
almost surely.
Hence, $\lim_{n\rightarrow\infty} \alpha_{n+1}\zeta_{3,n}=0$ almost surely on
$\tilde{\Lambda}_{Q}$.
Moreover, due to (\ref{l3.2.3}),
\begin{align*}
	\sum_{n=1}^{\infty} (\alpha_{n}-\alpha_{n+1} ) \zeta_{3,n} I_{ \{\tau_{Q}>n \} }
\end{align*}
is almost surely convergent.
Thus, $\sum_{n=1}^{\infty} (\alpha_{n}-\alpha_{n+1} ) \zeta_{3,n}$ converges almost surely on
$\tilde{\Lambda}_{Q}$.
Since
\linebreak
$\sum_{n=1}^{\infty}\alpha_{n}\zeta_{1,n}$,
$\sum_{n=1}^{\infty}\alpha_{n}\zeta_{2,n}$ are almost surely convergent on $\tilde{\Lambda}_{Q}$,
(\ref{l3.2.7}) implies that $\sum_{n=0}^{\infty} \alpha_{n}\zeta_{n}$ converges almost surely on
$\tilde{\Lambda}_{Q}$, too.
As $Q$ is any compact set in $\Theta$,
we conclude that $\sum_{n=0}^{\infty} \alpha_{n}\zeta_{n}$ is almost surely convergent on
$\{\sup_{n\geq 0}\|\theta_{n}\|<\infty, \inf_{n\geq 0} d(\theta_{n},\Theta^{c}) > 0 \}$.
Consequently, the first part of (\ref{l3.2.1}) holds almost surely on $\Lambda_{Q}$.
\end{proof}

\begin{lemma}\label{lemma3.3}
Let Assumptions \ref{a2} -- \ref{a4} hold.
Then, the following is true:

(i) $l(\theta)$ is well-defined for each $\theta\in\Theta$.
Moreover, $l(\theta)$ is Lipschitz continuously differentiable on $\Theta$.

(ii) If Assumption \ref{a5} also holds,
then $l(\theta)$ is $p$-times differentiable on $\Theta$.

(iii) If Assumption \ref{a6} also holds,
then $l(\theta)$ is real-analytic on $\Theta$.
\end{lemma}

\begin{proof}
(i) See Lemma \ref{lemma1.3}.
(ii) See \cite[Theorem 3.1]{tadic&doucet2}.
(iii) See \cite[Theorem 2.1]{tadic&doucet3}.
\end{proof}

\begin{proof}[\rm\bf Proof of Theorem \ref{theorem2}]
Let $\eta=\limsup_{n\rightarrow\infty} \|\eta_{n} \|$.
Then, Lemma \ref{lemma3.2} yields
$\eta\leq M_{Q}/N$ almost surely on $\Lambda_{Q}$.
Moreover, due to Assumption \ref{a1} and Lemmas \ref{lemma3.2}, \ref{lemma3.3},
Algorithm (\ref{3*.1}) satisfies all conditions which \cite[Theorem 2.1]{tadic&doucet21} is based on.
Consequently, \cite[Theorem 2.1]{tadic&doucet21} implies that there exist
a function $\psi_{Q}(t)$ and real numbers $r_{Q}$, $L_{1,Q}$, $L_{2,Q}$ with the properties
specified in the statement Theorem \ref{theorem2}.
\end{proof}

\refstepcounter{appendixcounter}\label{appendix1}
\section*{Appendix \arabic{appendixcounter}}

In this section, we present results on stochastic matrices which are needed for the proof of
Lemmas \ref{lemma2.3} and \ref{lemma3.1}.
Here, we rely on the following notation.
$\|\cdot\|$ denotes the Euclidean vector norm and Frobenius matrix norm,
while $\|\cdot\|_{1}$ stands for the $l_{1}$ vector norm.
$N\geq 1$ is an integer.
${\cal P}^{N}$ is the set of $N$-dimensional (column) probability vectors,
while ${\cal P}^{N\times N}$ is the set of $N\times N$ (column) stochastic matrices
(i.e., $A\in{\cal P}^{N\times N}$ if and only if
the columns of $A$ are elements of ${\cal P}^{N}$).
$e$ is the $N$-dimensional vector whose all elements are one.
For $1\leq i\leq N$, $e_{i}$ is the $i$-th standard unit vector in $\mathbb{R}^{N}$
(i.e., $e_{i}$ is the element of ${\cal P}^{N}$ whose $i$-th element is one).
$I$ is the $N\times N$ unit matrix.
$\Lambda$ is the matrix defined by
$
	\Lambda = I -ee^{T}/N
$.
For $A\in{\cal P}^{N\times N}$, $\tau(A)$ is the (Dobrushin) ergodicity coefficient, i.e.,
\begin{align*}
	\tau(A)
	=
	\frac{1}{2} \max_{1\leq j,j'\leq N}
	\sum_{i=1}^{N} |A_{i,j} - A_{i,j'} |,
\end{align*}
where $A_{i,j}$ is the $(i,j)$ entry of $A$.

\begin{lemmaappendix}\label{lemmaa2}
(i) If $A\in {\cal P}^{N\times N}$, then we have
\begin{align*}
	\tau(A)
	=
	1
	-
	\min_{1\leq j,j'\leq N} \sum_{i=1}^{N}
	\min\{A_{i,j},A_{i,j'} \},
\end{align*}
where $A_{i,j}$ is the $(i,j)$ entry of $A$.

(ii) If $A\in{\cal P}^{N\times N}$ and $z,z'\in{\cal P}^{N}$, then we have
\begin{align*}
	\|A(z-z')\|_{1}
	\leq
	\tau(A) \|z-z'\|_{1}.
\end{align*}
Moreover, if $A,A'\in{\cal P}^{N\times N}$,
then $\tau(AA')\leq\tau(A)\tau(A')$.
\end{lemmaappendix}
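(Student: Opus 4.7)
The plan is to derive both statements from two elementary ingredients: the identity $|a-b|=a+b-2\min\{a,b\}$ (valid for $a,b\geq 0$), together with a Monge--Kantorovich-type decomposition of mean-zero vectors into non-negative combinations of differences of standard unit vectors.

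For Part (i) I would fix a pair of column indices $j',j''$ and apply the above identity entry-wise. Since the columns of $A$ are probability vectors, the sum $\sum_{i}(A_{i,j'}+A_{i,j''})$ collapses to $2$, and one obtains
\begin{align*}
\sum_{i=1}^{N} |A_{i,j'}-A_{i,j''}|
=2-2\sum_{i=1}^{N}\min\{A_{i,j'},A_{i,j''}\}.
\end{align*}
Taking the maximum over $j',j''$, dividing by two, and converting the maximum of $-\sum_{i}\min\{\cdot,\cdot\}$ into a minimum gives the stated formula for $\tau(A)$.

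For Part (ii) the first step is the contraction inequality. I would let $w=z'-z''$ and observe that $e^{T}w=0$, with $\|w\|_{1}/2=:s$ equal both to $\sum_{i:w_{i}>0}w_{i}$ and to $-\sum_{i:w_{i}<0}w_{i}$. Any mean-zero vector of this form admits a non-negative decomposition $w=\sum_{i,j}\alpha_{ij}(e_{i}-e_{j})$ whose coefficients sum to $s$; concretely, take any transportation plan $\alpha$ whose marginals are the positive and negative parts of $w$, which exists by a greedy pairing of positive and negative mass. Linearity of $A$ and the triangle inequality then yield
\begin{align*}
\|Aw\|_{1}
\leq \sum_{i,j}\alpha_{ij}\|A(e_{i}-e_{j})\|_{1}
\leq \Bigl(\max_{i,j}\|A(e_{i}-e_{j})\|_{1}\Bigr)\sum_{i,j}\alpha_{ij}
=2\tau(A)\,s,
\end{align*}
which is exactly $\tau(A)\|z'-z''\|_{1}$; here the equality $\max_{i,j}\|A(e_{i}-e_{j})\|_{1}=2\tau(A)$ is the definition of $\tau$ applied to $A$'s columns.

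Submultiplicativity then follows by feeding the contraction inequality into itself: applying the just-established bound with $A$ replaced by $A'$ and $z'-z''$ replaced by $A''(e_{j'}-e_{j''})$ gives
\begin{align*}
\|A'A''(e_{j'}-e_{j''})\|_{1}\leq \tau(A')\,\|A''(e_{j'}-e_{j''})\|_{1}\leq 2\tau(A')\tau(A''),
\end{align*}
and maximising the left-hand side over $j',j''$ yields $2\tau(A'A'')\leq 2\tau(A')\tau(A'')$. The only non-mechanical step is the transportation-plan decomposition of $w$, but this is standard discrete-measure surgery and can be written down explicitly, so I do not anticipate any real obstacle; everything else reduces to rearranging the identity in Part (i).
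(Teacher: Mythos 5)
Your proof is correct. Note, however, that the paper does not prove this lemma at all: both parts are disposed of by citation to Bremaud's textbook (Definition 15.2.1 and Equation (15.9) for Part (i), Theorems 15.2.4 and 15.2.5 for Part (ii)), so your argument is compared against the standard textbook treatment rather than an in-paper derivation. Your route — the identity $|a-b|=a+b-2\min\{a,b\}$ for Part (i), and for Part (ii) a transportation-plan decomposition $w=\sum_{i,j}\alpha_{ij}(e_{i}-e_{j})$ of a mean-zero vector with $\sum_{i,j}\alpha_{ij}=\|w\|_{1}/2$, followed by the triangle inequality and the observation $\max_{i,j}\|A(e_{i}-e_{j})\|_{1}=2\tau(A)$ — is essentially the classical proof of Dobrushin's contraction properties, and all steps check out: the coupling with marginals $w^{+}$, $w^{-}$ exists (e.g.\ the product plan $\alpha_{ij}=w_{i}^{+}w_{j}^{-}/s$ when $s>0$, the case $s=0$ being trivial), and submultiplicativity follows legitimately because the columns of $A''$ are probability vectors, so $A''(e_{j'}-e_{j''})$ is an admissible difference for the contraction bound. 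What your version buys is self-containedness (no external reference needed) and a slightly more general contraction statement — your decomposition only uses $e^{T}w=0$, so the inequality $\|Aw\|_{1}\leq\tau(A)\|w\|_{1}$ in fact holds for every mean-zero $w$, not just differences of probability vectors; what the citation buys is brevity.
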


\begin{proof}
(i) See \cite[Definition 15.2.1, Equation (15.9)]{bremaud}.
(ii) See \cite[Theorems 15.2.4, 15.2.5]{bremaud}.
\end{proof}

\begin{lemmaappendix}\label{lemmaa1}
Let $\{A_{n} \}_{n\geq 1}$, $\{B_{n} \}_{n\geq 1}$ and $\{C_{n} \}_{n\geq 1}$
be sequences in ${\cal P}^{N\times N}$.
Moreover, let $a,b,c\in\mathbb{R}^{N}$.
Assume the following:

(i) There exists a real number $\alpha\in(0,1)$ such that
$\min\{A_{n,i,j}, B_{n,i,j}, C_{n,i,j} \} \geq \alpha/N$
for each $1\leq i,j\leq N$, $n\geq 1$,
where $A_{n,i,j}$, $B_{n,i,j}$, $C_{n,i,j}$ are the $(i,j)$ entries of
$A_{n}$, $B_{n}$, $C_{n}$ (respectively).

(ii) $e^{T}a=e^{T}b=e^{T}c=0$.
\\
Then, we have
\begin{align*}
	&
	\|A_{1}\cdots A_{n}\Lambda \|
	\leq
	K\beta^{n},
	\\
	&
	\|A_{1}\cdots A_{n}a \|
	\leq
	K\beta^{n}\|a\|,
	\\
	&
	\|B_{1}\cdots B_{n}b - C_{1}\cdots C_{n}c \|
	\\
	&\leq
	K\beta^{n}
	(\|b\|+\|c\|)
	\sum_{i=1}^{n} \|B_{i}-C_{i}\|
	+
	K\beta^{n}\|b-c\|
\end{align*}
for each $n\geq 1$,
where $\beta=1-\alpha$ and $K=4\beta^{-1}N$.
\end{lemmaappendix}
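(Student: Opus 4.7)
The plan is to base everything on the Dobrushin ergodicity coefficient $\tau(\cdot)$ of Lemma A.2. First I would show that assumption (i) implies $\tau(A_n), \tau(B_n), \tau(C_n) \leq 1-\alpha = \beta$ for each $n$: by Lemma A.2(i), for any two column indices $j',j''$ one has $\sum_{i} \min\{A_{n,i,j'}, A_{n,i,j''}\} \geq N\cdot(\alpha/N) = \alpha$, so $\tau(A_n) \leq 1-\alpha$, and the same argument works for $B_n$ and $C_n$. Submultiplicativity (Lemma A.2(ii)) then gives $\tau(A_1\cdots A_n) \leq \beta^n$ and likewise for the $B$'s and $C$'s. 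This geometric contraction is the only mechanism; the rest is bookkeeping between $\ell^1$ and $\ell^2$.

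Second, I would establish inequality (2) as the workhorse. For $a$ with $e^T a = 0$, decompose $a = a^+ - a^-$ into positive and negative parts; both have $\ell^1$-mass $s := \|a\|_1/2$, and (if $s>0$) $a^+/s$ and $a^-/s$ are probability vectors. By Lemma A.2(ii) applied to $A_1 \cdots A_n$ acting on the difference of these two probability vectors,
\[
\|A_1\cdots A_n a\|_1 \leq \tau(A_1\cdots A_n)\,\|a\|_1 \leq \beta^n \|a\|_1.
\]
Using $\|\cdot\| \leq \|\cdot\|_1 \leq \sqrt{N}\|\cdot\|$, this gives $\|A_1\cdots A_n a\| \leq \sqrt{N}\beta^n \|a\| \leq K\beta^n \|a\|$, since $K = 4N\beta^{-1} \geq \sqrt{N}$. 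Inequality (1) is then immediate from (2) applied column-by-column: each column of $\Lambda$ is $\Lambda e_j = e_j - e/N$, which satisfies $e^T \Lambda e_j = 0$ and $\|\Lambda e_j\|_1 \leq 2$, so $\|A_1 \cdots A_n \Lambda e_j\| \leq 2\beta^n$. Squaring and summing over $j$ gives $\|A_1\cdots A_n \Lambda\| \leq 2\sqrt{N}\beta^n \leq K\beta^n$.

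Third, for (3) I would use the standard telescoping identity
\[
B_1\cdots B_n b - C_1\cdots C_n c = B_1\cdots B_n (b-c) + \sum_{j=1}^n B_1\cdots B_{j-1}\,(B_j - C_j)\,C_{j+1}\cdots C_n\,c.
\]
The first term is handled directly by (2) (whose proof is symmetric in the role of $A,B,C$), yielding $K\beta^n \|b-c\|$ since $e^T(b-c) = 0$. For each summand, set $v_j := (B_j - C_j)C_{j+1}\cdots C_n c$; the key observation is $e^T(B_j - C_j) = e^T - e^T = 0$, so $e^T v_j = 0$ regardless of what $C_{j+1}\cdots C_n c$ is, which makes (2) available for $B_1\cdots B_{j-1}$ acting on $v_j$. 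Applying the $\ell^1$-form of (2) produces $\beta^{j-1}\|v_j\|_1$; I then bound $\|v_j\|_1 \leq \|B_j - C_j\|_{1\to 1}\|C_{j+1}\cdots C_n c\|_1 \leq \sqrt{N}\|B_j - C_j\| \cdot \beta^{n-j}\|c\|_1$, using both the matrix-norm comparison $\|M\|_{1\to 1} \leq \sqrt{N}\|M\|_F$ and the ergodicity contraction on the $C$-side (valid because $e^T c = 0$).

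The main obstacle is precisely matching the stated constant $K = 4N\beta^{-1}$ in part (3). Each telescoping term passes through two $\ell^1 \leftrightarrow \ell^2$ conversions (one on $\|B_j-C_j\|$, one on $\|c\|$), collapsing to a single factor $N$, so each summand is bounded by $N\beta^{n-1}\|B_j-C_j\|\|c\|$. Since $N\beta^{n-1} = (K/4)\beta^n$, summing over $j$ and combining with the $b-c$ term yields exactly
\[
K\beta^n \|c\|\sum_{j=1}^n \|B_j - C_j\| + K\beta^n \|b-c\|,
\]
which is the claimed bound (using $\|c\| \leq \|b\|+\|c\|$). Any tighter accounting is unnecessary, but any looser conversion would miss the factor $N$ in $K$, so the calibration of these norm conversions is the point where care is required.
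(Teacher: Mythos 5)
Your proof is correct and follows essentially the same route as the paper's: both rest on the Dobrushin coefficient bound $\tau(\cdot)\leq\beta$ from Lemma A.2, its submultiplicativity, and a telescoping decomposition for the third inequality, with only the $\ell^{1}$/Frobenius bookkeeping arranged differently. The paper simply works in the opposite order (it proves the bound on $\|A_{1}\cdots A_{n}\Lambda\|$ first via the columns $e_{i}-e/N$ and deduces the vector bound from $\tilde{A}_{k,l}\Lambda a=\tilde{A}_{k,l}a$), and it exploits $\Lambda(B_{i}-C_{i})=B_{i}-C_{i}$ where you use $e^{T}(B_{j}-C_{j})=0$; these are equivalent devices leading to the same constant $K=4\beta^{-1}N$.
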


\begin{proof}
Throughout the proof, the following notation is used.
$n,k,l$ are any integers satisfying $n\geq 1$, $l\geq k\geq 0$.
$\tilde{A}_{k,k}$, $\tilde{B}_{k,k}$, $\tilde{C}_{k,k}$ and
$\tilde{A}_{k,m}$, $\tilde{B}_{k,m}$, $\tilde{C}_{k,m}$ are the matrices
defined by
$\tilde{A}_{k,k}=\tilde{B}_{k,k}=\tilde{C}_{k,k}=I$ and
\begin{align*}
	&\tilde{A}_{k,m} = A_{k+1}\cdots A_{m},
	\\
	&\tilde{B}_{k,m} = B_{k+1}\cdots B_{m},
	\\
	&\tilde{C}_{k,m} = C_{k+m}\cdots C_{m}
\end{align*}
for $m>k\geq 0$.
Then, using Lemma \ref{lemmaa2}, we conclude
\begin{align*}
	\tau\big(\tilde{A}_{k,m}\big)\leq \tau(A_{k+1})\cdots\tau(A_{m}) \leq \beta^{m-k}.
\end{align*}
Relying on the same lemma, we deduce
\begin{align*}
	\tau(A_{n} )
	\!=
	1
	-
	\min_{1\leq j,j'\leq N} \sum_{i=1}^{N}
	\min\{A_{n,i,j}, A_{n,i,j'} \}
	\!\leq
	1-\alpha
	\!=
	\beta.
\end{align*}
Noticing $e_{i},e/N\in{\cal P}^{N}$
and applying Lemma \ref{lemmaa2} again, we get
\begin{align}\label{la1.1.301}
	\left\| \tilde{A}_{k,l} \left(e_{i} - \frac{e}{N} \right) \right\|_{1}
	\leq
	\tau\big(\tilde{A}_{k,l}\big) \left\| e_{i} - \frac{e}{N} \right\|_{1}
	\leq
	2\beta^{l-k}
\end{align}
for $1\leq i\leq N$.
Since $\tilde{A}_{k,l}\left(e_{i}-\frac{e}{N} \right)$ is the $i$-th column of
$\tilde{A}_{k,l}\Lambda$,
(\ref{la1.1.301}) yields
\begin{align}\label{la1.1.1}
	\big\|
	\tilde{A}_{k,l} \Lambda
	\big\|
	\leq
	N^{1/2} \max_{1\leq i\leq N}
	\left\| \tilde{A}_{k,l} \left( e_{i} - \frac{e}{N} \right) \right\|_{1}
	\leq
	2N^{1/2}\beta^{l-k}.
\end{align}
Hence, we get
\begin{align*}
	\|A_{1}\cdots A_{n} \Lambda \|
	=
	\big\|\tilde{A}_{0,n}\Lambda \big\|
	\leq
	2N^{1/2}\beta^{n}
	\leq
	K\beta^{n}.
\end{align*}
Moreover, we have
\begin{align*}
	\tilde{A}_{k,l} \Lambda a
	=
	\tilde{A}_{k,l}a - \frac{\tilde{A}_{k,l}e}{N} e^{T}a
	=
	A_{k,l}a.
\end{align*}
Consequently, (\ref{la1.1.1}) implies
\begin{align}\label{la1.1.3}
	\big\|\tilde{A}_{k,l}a\big\|
	=
	\big\| \tilde{A}_{k,l} \Lambda a \big\|
	\leq
	\big\| \tilde{A}_{k,l} \Lambda \big\| \|a\|
	\leq
	2N^{1/2}\beta^{l-k}\|a\|.
\end{align}
Thus, we get
\begin{align*}
	\|A_{1}\cdots A_{n} a \|
	=
	\big\|\tilde{A}_{0,n}a\big\| \leq 2N^{1/2}\beta^{n}\|a\| \leq K\beta^{n}\|a\|.
\end{align*}

Since $e^{T}B_{n}=e^{T}C_{n}=e^{T}$, we have
\begin{align*}
	\Lambda (B_{n} - C_{n} )
	=
	B_{n} - C_{n}
	-
	\frac{e}{N} (e^{T}B_{n} - e^{T}C_{n} )
	=
	B_{n} - C_{n}.
\end{align*}
Therefore, we get
\begin{align*}
	\tilde{B}_{0,n}b - \tilde{C}_{0,n}c
	=&
	\sum_{i=1}^{n}
	\tilde{B}_{0,i-1} (B_{i} - C_{i} ) \tilde{C}_{i,n} b
	+
	\tilde{C}_{0,n} (b-c)
	\\
	=&
	\sum_{i=1}^{n}
	\tilde{B}_{0,i-1} \Lambda
	(B_{i} - C_{i} ) \tilde{C}_{i,n} b
	+
	\tilde{C}_{0,n} (b-c).
\end{align*}
Then, applying (\ref{la1.1.1}), (\ref{la1.1.3}) to
$\{B_{n} \}_{n\geq 1}$, $\{C_{n} \}_{n\geq 1}$,
$b$, $c$, we get
\begin{align*}
	\big\| \tilde{B}_{0,n}b - \tilde{C}_{0,n}c \big\|
	\leq &
	\sum_{i=1}^{n}
	\big\| \tilde{B}_{0,i-1} \Lambda \big\|
	\|B_{i} - C_{i} \|  \big\|\tilde{C}_{i,n} b \big\|
	\\
	&+
	\big\|\tilde{C}_{0,n} (b-c) \big\|
	\\
	\leq &
	4N\beta^{n-1} \|b\| \sum_{i=1}^{n} \|B_{i}-C_{i}\|
	\\
	&+
	2N^{1/2}\beta^{n} \|b-c\|
	\\
	\leq &
	K\beta^{n} (\|b\|+\|c\| ) \sum_{i=1}^{n} \|B_{i}-C_{i}\| 
	\\
	&+ 
	K\beta^{n}\|b-c\|.
\end{align*}
Hence, we have
\begin{align*}
	\|B_{1}\cdots B_{n} b - C_{1}\cdots C_{n} c \|
	=&
	\big\|\tilde{B}_{0,n}b - \tilde{C}_{0,n}c\big\|
	\\
	\leq&
	K\beta^{n} (\|b\|\!+\!\|c\| ) \sum_{i=1}^{n} \|B_{i}-C_{i}\| 
	\\
	&+ 
	K\beta^{n}\|b-c\|.
\end{align*}
\end{proof}

\refstepcounter{appendixcounter}\label{appendix2}
\section*{Appendix \arabic{appendixcounter}}

In this section, we prove Lemma \ref{lemma1.3}.
We rely on the following notation.
$\tilde{r}_{\theta}(y,x'|x)$
is the function defined for $\theta\in\Theta$, $x,x'\in{\cal X}$, $y\in{\cal Y}$ by
\begin{align}\label{a2.1*}
	\tilde{r}_{\theta}(y,x'|x)
	=
	q_{\theta}(y|x')p_{\theta}(x'|x).
\end{align}
$\tilde{h}_{\theta,y}(x|\xi,\zeta)$ and $\tilde{H}_{\theta,y}(\xi,\zeta)$
are the functions defined for $\xi\in{\cal P}({\cal X})$, $\zeta\in{\cal M}_{s}^{d}({\cal X})$ by
\begin{align}
	&\label{a2.5*.a}
	\tilde{h}_{\theta,y}(x|\xi,\zeta)
	=
	\frac{\int \tilde{r}_{\theta}(y,x|x')\zeta(dx')
	+ \int \nabla_{\theta} \tilde{r}_{\theta}(y,x|x')\xi(dx') }
	{\iint \tilde{r}_{\theta}(y,x''|x')\mu(dx'')\xi(dx') },
	\\
	&\label{a2.5*.b}
	\tilde{H}_{\theta,y}(\xi,\zeta)
	=
	\int \tilde{h}_{\theta,y}(x|\xi,\zeta) \mu(dx),
\end{align}
while
$\tilde{f}_{\theta,y}(x|\xi)$ and $\tilde{g}_{\theta,y}(x|\xi,\zeta)$
are defined as
\begin{align*}
	&
	\tilde{f}_{\theta,y}(x|\xi)
	=
	\frac{\int \tilde{r}_{\theta}(y,x|x')\xi(dx') }
	{\iint \tilde{r}_{\theta}(y,x''|x)\mu(dx'')\xi(dx') },
	\\
	&
	\tilde{g}_{\theta,y}(x|\xi,\zeta)
	=
	\tilde{h}_{\theta,y}(x|\xi,\zeta)
	-
	\tilde{f}_{\theta,y}(x|\xi)
	\tilde{H}_{\theta,y}(\xi,\zeta).
\end{align*}
$\tilde{F}_{\theta,y}(dx|\xi)$ and $\tilde{G}_{\theta,y}(dx|\xi,\zeta)$
are the measures defined for $B\in{\cal B}({\cal X})$ by
\begin{align*}
	&
	\tilde{F}_{\theta,y}(B|\xi)
	=
	\int_{B} \tilde{f}_{\theta,y}(x|\xi) \mu(dx),
	\\
	&
	\tilde{G}_{\theta,y}(B|\xi,\zeta)
	=
	\int_{B} \tilde{g}_{\theta,y}(x|\xi,\zeta) \mu(dx).
\end{align*}
Measures $\tilde{F}_{\theta,y}(dx|\xi)$
and $\tilde{G}_{\theta,y}(dx|\xi,\zeta)$
are also denoted by $\tilde{F}_{\theta,y}(\xi)$
and $\tilde{G}_{\theta,y}(\xi,\zeta)$ (short-hand notation).
$\tilde{r}_{\theta,\boldsymbol y}^{m:n}(x'|x)$
is the function recursively defined by
\begin{align*}
	&
	\tilde{r}_{\theta,\boldsymbol y}^{m:m+1}(x'|x)
	=
	\tilde{r}_{\theta}(y_{m+1},x'|x),
	\\
	&
	\tilde{r}_{\theta,\boldsymbol y}^{m:n+1}(x'|x)
	=
	\int \tilde{r}_{\theta,\boldsymbol y}^{n:n+1}(x'|x'')
	\tilde{r}_{\theta,\boldsymbol y}^{m:n}(x''|x) \mu(dx'')
\end{align*}
for $n>m\geq 0$ and a sequence $\boldsymbol y = \{y_{n} \}_{n\geq 0}$ in ${\cal Y}$.
$\tilde{h}_{\theta,\boldsymbol y}^{m:n}(x|\xi,\zeta)$ and
$\tilde{H}_{\theta,\boldsymbol y}^{m:n}(\xi,\zeta)$
are the functions defined by
\begin{align}
	&
	h_{\theta,\boldsymbol y}^{m:n}(x|\xi,\zeta)
	\!=\!
	\frac{\int r_{\theta,\boldsymbol y}^{m:n}(x|x') \zeta(dx')
	\!+\! \int \nabla_{\theta} r_{\theta,\boldsymbol y}^{m:n}(x|x') \xi(dx')}
	{\iint r_{\theta,\boldsymbol y}^{m:n}(x''|x') \xi(dx')\mu(dx'') },
	\nonumber\\
	&\label{a2.9*.b}
	\tilde{H}_{\theta,\boldsymbol y}^{m:n}(\xi,\zeta)
	\!=\!
	\int \tilde{h}_{\theta,\boldsymbol y}^{m:n}(x|\xi,\zeta) \mu(dx),
\end{align}
while
$\tilde{f}_{\theta,\boldsymbol y}^{m:n}(x|\xi)$ and
$\tilde{g}_{\theta,\boldsymbol y}^{m:n}(x|\xi,\zeta)$
are defined as
\begin{align*}
	&
	\tilde{f}_{\theta,\boldsymbol y}^{m:n}(x|\xi)
	=
	\frac{\int \tilde{r}_{\theta,\boldsymbol y}^{m:n}(x|x') \xi(dx') }
	{\iint \tilde{r}_{\theta,\boldsymbol y}^{m:n}(x''|x') \xi(dx')\mu(dx'') },
	\\
	&
	\tilde{g}_{\theta,\boldsymbol y}^{m:n}(x|\xi,\zeta)
	=
	\tilde{h}_{\theta,\boldsymbol y}^{m:n}(x|\xi,\zeta)
	-
	\tilde{f}_{\theta,\boldsymbol y}^{m:n}(x|\xi)
	\tilde{H}_{\theta,\boldsymbol y}^{m:n}(\xi,\zeta).
\end{align*}
$\tilde{F}_{\theta,\boldsymbol y}^{m:m}(dx|\xi)$,
$\tilde{F}_{\theta,\boldsymbol y}^{m:n}(dx|\xi)$
and
$\tilde{G}_{\theta,\boldsymbol y}^{m:m}(dx|\xi,\zeta)$,
$\tilde{G}_{\theta,\boldsymbol y}^{m:n}(dx|\xi,\zeta)$
are the measures defined by
$\tilde{F}_{\theta,\boldsymbol y}^{m:m}(B|\xi)=\xi(B)$, 
$\tilde{G}_{\theta,\boldsymbol y}^{m:m}(B|\xi,\zeta)=\zeta(B)$ 
and 
\begin{align*}
	&
	\tilde{F}_{\theta,\boldsymbol y}^{m:n}(B|\xi)
	=
	\int_{B} \tilde{f}_{\theta,\boldsymbol y}^{m:n}(x|\xi) \mu(dx),
	\\
	&
	\tilde{G}_{\theta,\boldsymbol y}^{m:n}(B|\xi,\zeta)
	=
	\int_{B} \tilde{g}_{\theta,\boldsymbol y}^{m:n}(x|\xi,\zeta) \mu(dx).
\end{align*}
Measures $\tilde{F}_{\theta,\boldsymbol y}^{m:n}(dx|\xi)$ and
$\tilde{G}_{\theta,\boldsymbol y}^{m:n}(dx|\xi,\zeta)$
are also denoted by $\tilde{F}_{\theta,\boldsymbol y}^{m:n}(\xi)$ and
$\tilde{G}_{\theta,\boldsymbol y}^{m:n}(\xi,\zeta)$
(short-hand notation).
Then, it is easy to show that
$\tilde{F}_{\theta,\boldsymbol y}^{m:n}(\xi)$
and $\tilde{G}_{\theta,\boldsymbol y}^{m:n}(\xi,\zeta)$
are the optimal filter and its gradient,
i.e.,
\begin{align*}
	&
	\tilde{F}_{\theta,\boldsymbol y}^{0:n}(B|\lambda)
	=
	P\left(\left. X_{n}^{\theta,\lambda}\in B \right|Y_{1:n}^{\theta,\lambda}=y_{1:n} \right),
	\\
	&
	\tilde{G}_{\theta,\boldsymbol y}^{0:n}(B|\lambda,\boldsymbol 0)
	=
	\nabla_{\theta} \tilde{F}_{\theta,\boldsymbol y}^{0:n}(B|\lambda)
\end{align*}
for each $\lambda\in{\cal P}({\cal X})$, $n\geq 1$.
Moreover, it is straightforward to verify
\begin{align*}
	&
	\tilde{F}_{\theta,\boldsymbol y}^{m:n+1}(\xi)
	=
	\tilde{F}_{\theta,y_{n+1}}\left( \tilde{F}_{\theta,\boldsymbol y}^{m:n}(\xi) \right),
	\\
	&
	\tilde{G}_{\theta,\boldsymbol y}^{m:n+1}(\xi,\zeta)
	=
	\tilde{G}_{\theta,y_{n+1}}\left( \tilde{F}_{\theta,\boldsymbol y}^{m:n}(\xi),
	\tilde{G}_{\theta,\boldsymbol y}^{m:n}(\xi,\zeta) \right)
\end{align*}
for each $\xi\in{\cal P}({\cal X})$,
$\zeta\in{\cal M}_{s}^{d}({\cal X})$, $n\geq m\geq 0$.

\begin{remark}
We recall here that $Q$ stands for any compact set satisfying $Q\subset\Theta$.
\end{remark}

\begin{lemmaappendix}\label{lemma1.1}
Let Assumptions \ref{a3} and \ref{a4} hold.
Then, there exists a real number $C_{7,Q}\in[1,\infty)$
(independent of $N$ and depending only on $p_{\theta}(x'|x)$, $q_{\theta}(y|x)$)
such that
\begin{align*}
	&
	\big\|\tilde{F}_{\theta,y}(\xi) - \tilde{F}_{\theta',y}(\xi) \big\|
	\leq
	C_{7,Q}\|\theta-\theta'\|,
	\\
	&
	\big\|\tilde{G}_{\theta,y}(\xi,\zeta) - \tilde{G}_{\theta',y}(\xi,\zeta) \big\|
	\leq
	C_{7,Q}\|\theta-\theta'\|(1 + \|\zeta\| ),
	\\
	&
	\big\|\tilde{H}_{\theta,y}(\xi,\zeta) - \tilde{H}_{\theta',y}(\xi',\zeta') \big\|
	\\
	&\leq
	C_{7,Q}(\|\theta-\theta'\| + \|\xi-\xi'\| )(1+ \|\zeta\| )
	+
	C_{7,Q}\|\zeta-\zeta'\|
\end{align*}
for all $\theta,\theta'\in Q$,
$\xi,\xi'\in{\cal P}({\cal X})$,
$\zeta,\zeta'\in{\cal M}_{s}^{d}({\cal X})$.
\end{lemmaappendix}

\begin{proof}
Throughout the proof, the following notation is used.
$\theta$, $\theta'$ are any elements of $Q$.
$x$, $x'$ are any elements of ${\cal X}$,
while $y$ is any element of ${\cal Y}$.
$\xi$, $\xi'$ are any elements of ${\cal P}({\cal X})$,
while $\zeta$, $\zeta'$ are any elements of ${\cal M}_{s}^{d}({\cal X})$.

Let $\tilde{C}_{1,Q}=2\varepsilon_{Q}^{-3}K_{1,Q}(1+\|\mu\|)$
($\varepsilon_{Q}$, $K_{1,Q}$ are specified in Assumptions \ref{a3}, \ref{a4},
while $\mu(dx)$ is defined in Subsection \ref{ssection1.1}).
Owing to Assumption \ref{a3}, we have
\begin{align}\label{l1.1.151}
	\varepsilon_{Q}^{2}
	\leq
	\varepsilon_{Q} p_{\theta}(x'|x)
	\leq
	\tilde{r}_{\theta}(y,x'|x)
	\leq
	\frac{1}{\varepsilon_{Q} } p_{\theta}(x'|x)
	\leq
	\frac{1}{\varepsilon_{Q}^{2} }.
\end{align}
Consequently, we get
\begin{align}
	&\label{l1.1.1.a}
	\varepsilon_{Q}
	\leq
	\int \tilde{r}_{\theta}(y,x'|x) \mu(dx')
	\leq
	\frac{1}{\varepsilon_{Q} },
	\\
	&\label{l1.1.1.b}
	\varepsilon_{Q}^{3} \leq
	\tilde{f}_{\theta,y}(x|\xi)
	\leq
	\frac{1}{\varepsilon_{Q}^{3} }
	\leq
	\tilde{C}_{1,Q}.
\end{align}
Moreover, due to Assumptions \ref{a3}, \ref{a4}, we have
\begin{align}\label{l1.1.153}
	\|\nabla_{\theta} \tilde{r}_{\theta}(x'|y,x)\|
	\leq&
	\|\nabla_{\theta} q_{\theta}(y|x') \| p_{\theta}(x'|x)
	\nonumber\\
	&+
	q_{\theta}(y|x') \|\nabla_{\theta} p_{\theta}(x'|x) \|
	\nonumber\\
	\leq&
	\frac{2K_{1,Q}}{\varepsilon_{Q} }.
\end{align}
Therefore, we get
\begin{align}\label{l1.1.3}
	\big\|\tilde{h}_{\theta,y}(x|\xi,\zeta)\big\|
	\leq&
	\frac{\int \|\nabla_{\theta} \tilde{r}_{\theta}(y,x|x') \| \xi(dx') }
	{\iint \tilde{r}_{\theta}(y,x''|x')\mu(dx'')\xi(dx') }
	\nonumber\\
	&+
	\frac{\int \tilde{r}_{\theta}(y,x|x')|\zeta|(dx') }
	{\iint \tilde{r}_{\theta}(y,x''|x')\mu(dx'')\xi(dx') }
	\nonumber\\
	\leq&
	\frac{2K_{1,Q}}{\varepsilon_{Q}^{3} } (1 + \|\zeta\| )
	\leq
	\tilde{C}_{1,Q} (1+\|\zeta \|).
\end{align}
Hence, we have
\begin{align}\label{l1.1.5}
	\big\|\tilde{H}_{\theta,y}(\xi,\zeta) \big\|
	\leq&
	\int \|\tilde{h}_{\theta,y}(x|\xi,\zeta) \| \mu(dx)
	\nonumber\\
	\leq&
	\frac{2K_{1,Q}\|\mu\| }{\varepsilon_{Q}^{3} } (1 + \|\zeta\| )
	\leq
	\tilde{C}_{1,Q}(1 + \|\zeta\| ).
\end{align}

Let $\tilde{C}_{2,Q}=6\varepsilon_{Q}^{-3}\tilde{C}_{1,Q}K_{1,Q}^{2}(1+\|\mu\| )$.
Due to Assumptions \ref{a3}, \ref{a4}, we have
\begin{align}\label{l1.1.7}
	|\tilde{r}_{\theta}(x'|y,x) \!-\! \tilde{r}_{\theta'}(x'|y,x) |
	\leq &
	|q_{\theta}(y|x') \!-\! q_{\theta'}(y|x') | p_{\theta}(x'|x)
	\nonumber\\
	&
	+
	q_{\theta'}(y|x') |p_{\theta}(x'|x) \!-\! p_{\theta'}(x'|x) |
	\nonumber\\
	\leq &
	\frac{2K_{1,Q}}{\varepsilon_{Q}} \|\theta-\theta' \|.
\end{align}
Owing to Assumptions \ref{a3}, \ref{a4}, we also have
\begin{align}\label{l1.1.9}
	&
	\|\nabla_{\theta} \tilde{r}_{\theta}(x'|y,x)
	- \nabla_{\theta} \tilde{r}_{\theta'}(x'|y,x) \|
	\nonumber\\
	&
	\begin{aligned}
	\leq &
	\|\nabla_{\theta} q_{\theta}(y|x') - \nabla_{\theta} q_{\theta'}(y|x') \|
	p_{\theta}(x'|x)
	\nonumber\\
	&+
	\|\nabla q_{\theta'}(y|x') \| |p_{\theta}(x'|x) - p_{\theta'}(x'|x) |
	\nonumber\\
	&+
	|q_{\theta}(y|x') - q_{\theta'}(y|x') | \|\nabla_{\theta} p_{\theta}(x'|x) \|
	\nonumber\\
	&+
	q_{\theta'}(y|x') \|\nabla_{\theta} p_{\theta}(x'|x) - \nabla_{\theta} p_{\theta'}(x'|x) \|
	\end{aligned}
	\nonumber\\
	&\leq 
	\frac{4K_{1,Q}^{2}}{\varepsilon_{Q}} \|\theta-\theta' \|.
\end{align}
Then, using (\ref{l1.1.1.a}), (\ref{l1.1.1.b}), (\ref{l1.1.7}), we conclude
\begin{align}\label{l1.1.21}
	&
	\big|\tilde{f}_{\theta,y}(x|\xi) - \tilde{f}_{\theta',y}(x|\xi) \big|
	\nonumber\\
	&
	\begin{aligned}
	\leq& 
	\frac{\int |\tilde{r}_{\theta}(y,x|x') - \tilde{r}_{\theta'}(y,x|x') | \xi(dx') }
	{\iint \tilde{r}_{\theta}(y,x''|x') \mu(dx'') \xi(dx') }
	\nonumber\\
	&+\!
	\begin{aligned}[t]
	&
	\frac{\iint |\tilde{r}_{\theta}(y,x''|x') \!-\! \tilde{r}_{\theta'}(y,x''|x') |
	\mu(dx'') \xi(dx') }
	{\iint \tilde{r}_{\theta}(y,x''|x') \mu(dx'') \xi(dx') }
	\nonumber\\
	&\cdot 
	\tilde{f}_{\theta',y}(x|\xi)
	\end{aligned}
	\nonumber\\
	\leq &
	\left(
	\frac{2K_{1,Q}}{\varepsilon_{Q}^{2} }
	+ \frac{2\tilde{C}_{1,Q}K_{1,Q}\|\mu\| }{\varepsilon_{Q}^{2} }
	\right)
	\|\theta-\theta' \|
	\end{aligned}
	\nonumber\\
	&\leq 
	\tilde{C}_{2,Q} \|\theta-\theta'\|.
\end{align}
Similarly, relying on (\ref{l1.1.1.a}), (\ref{l1.1.3}), (\ref{l1.1.7}), (\ref{l1.1.9}),
we deduce
\begin{align}\label{l1.1.23}
	&
	\big\|\tilde{h}_{\theta,y}(x|\xi,\zeta) - \tilde{h}_{\theta',y}(x|\xi,\zeta) \big\|
	\nonumber\\
	&
	\begin{aligned}
	\leq& 
	\frac{\int |\tilde{r}_{\theta}(y,x|x')
	- \tilde{r}_{\theta'}(y,x|x') | \: |\zeta|(dx') }
	{\iint \tilde{r}_{\theta}(y,x''|x') \mu(dx'') \xi(dx') }
	\nonumber\\
	&+
	\frac{\int \|\nabla_{\theta} \tilde{r}_{\theta}(y,x|x')
	- \nabla_{\theta} \tilde{r}_{\theta'}(y,x|x') \| \xi(dx') }
	{\iint \tilde{r}_{\theta}(y,x''|x') \mu(dx'') \xi(dx') }
	\nonumber\\
	&+
	\begin{aligned}[t]
	&
	\frac{\iint |\tilde{r}_{\theta}(y,x''|x') - \tilde{r}_{\theta'}(y,x''|x') |
	\mu(dx'')\xi(dx') }
	{\iint \tilde{r}_{\theta}(y,x''|x') \mu(dx'') \xi(dx') }
	\nonumber\\
	&\cdot 
	\big\|\tilde{h}_{\theta',y}(x|\xi,\zeta) \big\|
	\end{aligned}
	\end{aligned}
	\nonumber\\
	&
	\begin{aligned}
	\leq& 
	\frac{2K_{1,Q}}{\varepsilon_{Q}^{2} }
	\|\theta-\theta'\|
	\|\zeta\|
	+\!
	\frac{4K_{1,Q}^{2}}{\varepsilon_{Q}^{2} }
	\|\theta-\theta'\|
	\nonumber\\
	&+\!
	\frac{2\tilde{C}_{1,Q}K_{1,Q}\|\mu\| }{\varepsilon_{Q}^{2} }
	\|\theta-\theta' \|
	(1 \!+\! \|\zeta\| )
	\end{aligned}
	\nonumber\\
	&\leq 
	\tilde{C}_{2,Q} \|\theta-\theta'\| (1 + \|\zeta\| ).
\end{align}
Moreover, (\ref{l1.1.151}) -- (\ref{l1.1.3}) imply
\begin{align}\label{l1.1.103}
	&
	\big\|\tilde{h}_{\theta,y}(\xi,\zeta) - \tilde{h}_{\theta,y}(\xi',\zeta') \big\|
	\nonumber\\
	&
	\begin{aligned}
	\leq &
	\frac{\int \|\nabla_{\theta} \tilde{r}_{\theta}(y,x|x') \| |\xi-\xi'|(dx') }
	{\iint \tilde{r}_{\theta}(y,x''|x') \mu(dx'') \xi'(dx') }
	\nonumber\\
	&+
	\frac{\int \tilde{r}_{\theta}(y,x|x') |\zeta-\zeta'|(dx') }
	{\iint \tilde{r}_{\theta}(y,x''|x') \mu(dx'') \xi'(dx') }
	\nonumber\\
	&+
	\big\|\tilde{h}_{\theta,y}(\xi,\zeta) \big\|
	\frac{\iint \tilde{r}_{\theta}(y,x''|x') \mu(dx'') |\xi-\xi'|(dx') }
	{\iint \tilde{r}_{\theta}(y,x''|x') \mu(dx'') \xi'(dx') }
	\end{aligned}
	\nonumber\\
	&\leq 
	\frac{2K_{1,Q}}{\varepsilon_{Q}^{2} } \|\xi-\xi'\|
	+
	\frac{1}{\varepsilon_{Q}^{3} } \|\zeta-\zeta'\|
	+
	\frac{\tilde{C}_{1,Q}}{\varepsilon_{Q}^{3}}\|\xi-\xi'\|(1+\|\zeta\| )
	\nonumber\\
	&\leq 
	\tilde{C}_{2,Q} \|\xi-\xi'\|(1+\|\zeta\|)
	+
	\tilde{C}_{2,Q} \|\zeta-\zeta'\|.
\end{align}

Let $\tilde{C}_{3,Q}=2\tilde{C}_{1,Q}\tilde{C}_{2,Q}(1+\|\mu\|)$.
Then, (\ref{l1.1.1.a}), (\ref{l1.1.1.b}), (\ref{l1.1.3}), (\ref{l1.1.21}), (\ref{l1.1.23}) imply
\begin{align}\label{l1.1.27}
	&
	\big\|\tilde{g}_{\theta,y}(x|\xi,\zeta) - \tilde{g}_{\theta',y}(x|\xi,\zeta) \big\|
	\nonumber\\
	&
	\begin{aligned}
	\leq &
	\big\|\tilde{h}_{\theta,y}(x|\xi,\zeta) - \tilde{h}_{\theta',y}(x|\xi,\zeta) \big\|
	\nonumber\\
	&+
	\big|\tilde{f}_{\theta,y}(x|\xi) - \tilde{f}_{\theta',y}(x|\xi) \big|
	\int \big\|\tilde{h}_{\theta,y}(x'|\xi,\zeta) \big\| \mu(dx')
	\nonumber\\
	&+
	\tilde{f}_{\theta',y}(x|\xi)
	\int \big\|\tilde{h}_{\theta,y}(x'|\xi,\zeta) - \tilde{h}_{\theta',y}(x'|\xi,\zeta) \big\|
	\mu(dx')
	\end{aligned}
	\nonumber\\
	&\leq 
	(\tilde{C}_{2,Q} + 2\tilde{C}_{1,Q}\tilde{C}_{2,Q}\|\mu\| )
	\|\theta-\theta'\| (1 + \|\zeta\| )
	\nonumber\\
	&\leq 
	\tilde{C}_{3,Q} \|\theta-\theta'\| (1 + \|\zeta\| ).
\end{align}
Moreover, (\ref{l1.1.23}) yields
\begin{align}\label{l1.1.25}
	&
	\big\|\tilde{H}_{\theta,y}(\xi,\zeta) - \tilde{H}_{\theta',y}(\xi,\zeta) \big\|
	\nonumber\\
	&\leq 
	\int \big\|\tilde{h}_{\theta,y}(x|\xi,\zeta) - \tilde{h}_{\theta',y}(x|\xi,\zeta) \big\|
	\mu(dx)
	\nonumber\\
	&
	\leq 
	\tilde{C}_{2,Q}\|\mu\|\|\theta-\theta'\|(1+\|\zeta\| )
	\leq 
	\tilde{C}_{3,Q}\|\theta-\theta'\|(1+\|\zeta\| ).
\end{align}
Similarly, (\ref{l1.1.103}) implies
\begin{align}\label{l1.1.29}
	&
	\big\|\tilde{H}_{\theta,y}(\xi,\zeta) - \tilde{H}_{\theta,y}(\xi',\zeta') \big\|
	\nonumber\\
	&\leq 
	\int \big\|\tilde{h}_{\theta,y}(x|\xi,\zeta) - \tilde{h}_{\theta,y}(x|\xi',\zeta') \big\|
	\mu(dx)
	\nonumber\\
	&\leq 
	\tilde{C}_{2,Q}\|\mu\| \|\xi-\xi'\|(1+\|\zeta\|)
	+
	\tilde{C}_{2,Q}\|\mu\| \|\zeta-\zeta'\|
	\nonumber\\
	&\leq 
	\tilde{C}_{3,Q} \|\xi-\xi'\|(1+\|\zeta\|)
	+
	\tilde{C}_{3,Q} \|\zeta-\zeta'\|.
\end{align}

Let $C_{7,Q}=\tilde{C}_{3,Q}(1+\|\mu\|)$.
Then, using (\ref{l1.1.21}), we conclude
\begin{align*}
	&
	\big\|\tilde{F}_{\theta,y}(\xi) - \tilde{F}_{\theta',y}(\xi) \big\|
	\nonumber\\
	&\leq
	\int \big|\tilde{f}_{\theta,y}(x|\xi) - \tilde{f}_{\theta',y}(x|\xi) \big| \mu(dx)
	\nonumber\\
	&\leq
	\tilde{C}_{1,Q}\|\mu\| \|\theta-\theta'\|
	\leq
	C_{7,Q} \|\theta-\theta' \|
\end{align*}
(notice that $\tilde{C}_{1,Q}\leq\tilde{C}_{3,Q})$.
Similarly, relying on (\ref{l1.1.27}), we deduce
\begin{align*}
	&
	\big\|\tilde{G}_{\theta,y}(\xi,\zeta) - \tilde{G}_{\theta',y}(\xi,\zeta) \big\|
	\nonumber\\
	&
	\leq 
	\int \big|\tilde{g}_{\theta,y}(x|\xi,\zeta) - \tilde{g}_{\theta',y}(x|\xi,\zeta) \big| \mu(dx)
	\\
	&
	\leq 
	\tilde{C}_{3,Q}\|\mu\| \|\theta-\theta'\| (1 + \|\zeta\| )
	\leq 
	C_{7,Q} \|\theta-\theta' \| (1 + \|\zeta\| ).
\end{align*}
Moreover, combining (\ref{l1.1.25}), (\ref{l1.1.29}), we get
\begin{align*}
	&
	\big\|\tilde{H}_{\theta,y}(\xi,\zeta) - \tilde{H}_{\theta',y}(\xi',\zeta') \big\|
	\nonumber\\
	&
	\leq 
	\big\|\tilde{H}_{\theta,y}(\xi,\zeta) - \tilde{H}_{\theta',y}(\xi,\zeta) \big\|
	\!+\!
	\big\|\tilde{H}_{\theta',y}(\xi,\zeta) - \tilde{H}_{\theta',y}(\xi',\zeta') \big\|
	\\
	&
	\leq 
	\tilde{C}_{3,Q} (\|\theta-\theta'\| + \|\xi-\xi'\| ) (1+\|\zeta\| )
	+
	\tilde{C}_{3,Q} \|\zeta-\zeta' \|
	\\
	&\leq 
	C_{7,Q} (\|\theta-\theta'\| + \|\xi-\xi'\| ) (1+\|\zeta\| )
	+
	C_{7,Q} \|\zeta-\zeta' \|.
\end{align*}
\end{proof}

\begin{lemmaappendix}\label{lemma1.2}
Let Assumptions \ref{a3} and \ref{a4} hold.
Then, the following is true:

(i) There exist real numbers $\rho_{4,Q}\in(0,1)$, $C_{8,Q}\in[1,\infty)$
(independent of $N$ and depending only on $p_{\theta}(x'|x)$, $q_{\theta}(y|x)$)
such that
\begin{align}
	&\label{l1.2.1*}
	\big\|\tilde{G}_{\theta,\boldsymbol y}^{m:n}(\xi,\zeta) \big\|
	\leq
	C_{8,Q}(1 + \|\zeta\| ),
	\\
	&\label{l1.2.3*}
	\big\|\tilde{F}_{\theta,\boldsymbol y}^{m:n}(\xi)
	- \tilde{F}_{\theta,\boldsymbol y}^{m:n}(\xi') \big\|
	\leq
	C_{8,Q} \rho_{4,Q}^{n-m} \|\xi-\xi'\|,
	\\
	&\label{l1.2.5*}
	\big\|\tilde{G}_{\theta,\boldsymbol y}^{m:n}(\xi,\zeta)
	-
	\tilde{G}_{\theta,\boldsymbol y}^{m:n}(\xi',\zeta') \big\|
	\nonumber\\
	&\leq
	C_{8,Q} \rho_{4,Q}^{n-m} \|\xi-\xi'\| (1 + \|\zeta\| )
	+
	C_{8,Q} \rho_{4,Q}^{n-m} \|\zeta-\zeta' \|
\end{align}
for all $\theta\in Q$, $\xi,\xi'\in{\cal P}({\cal X})$,
$\zeta,\zeta'\in{\cal M}_{s}^{d}({\cal X})$,
$n\geq m\geq 0$ and any sequence $\boldsymbol y = \{y_{n} \}_{n\geq 0}$ in ${\cal Y}$.

(ii) There exists a real number $C_{9,Q}\in[1,\infty)$
(independent of $N$ and depending only on $p_{\theta}(x'|x)$, $q_{\theta}(y|x)$)
such that
\begin{align*}
	&
	\big\|\tilde{F}_{\theta,\boldsymbol y}^{m:n}(\xi)
	- \tilde{F}_{\theta',\boldsymbol y}^{m:n}(\xi) \big\|
	\leq
	C_{9,Q} \|\theta-\theta'\|,
	\\
	&
	\big\|\tilde{G}_{\theta,\boldsymbol y}^{m:n}(\xi,\zeta)
	-
	\tilde{G}_{\theta',\boldsymbol y}^{m:n}(\xi,\zeta) \big\|
	\leq
	C_{9,Q} \|\theta-\theta'\| (1 + \|\zeta\| )
\end{align*}
for all $\theta,\theta'\in Q$, $\xi\in{\cal P}({\cal X})$,
$\zeta\in{\cal M}_{s}^{d}({\cal X})$,
$n\geq m\geq 0$ and any sequence $\boldsymbol y = \{y_{n} \}_{n\geq 0}$ in ${\cal Y}$.
\end{lemmaappendix}

\begin{proof}
(i) See \cite[Theorems 3.1, 3.2]{tadic&doucet1} (or \cite[Theorem 2.2]{tadic&doucet2}).

(ii) Throughout this part of the proof, the following notation is used.
$\theta$, $\theta'$ are any elements of $Q$.
$\xi$, $\zeta$ are any elements of ${\cal P}({\cal X})$, ${\cal M}_{s}^{d}({\cal X})$ (respectively).
$\boldsymbol y = \{y_{n} \}_{n\geq 0}$ is any sequence in ${\cal Y}$.
$n,m$ are any integers satisfying $n>m\geq 0$.

It is straightforward to verify
\begin{align}\label{l1.2.1}
	&
	\tilde{F}_{\theta,\boldsymbol y}^{m:n}(\xi) - \tilde{F}_{\theta',\boldsymbol y}^{m:n}(\xi)
	\nonumber\\
	&=
	\sum_{i=m}^{n-1}
	\left(
	\tilde{F}_{\theta,\boldsymbol y}^{i:n}
	\big(\tilde{F}_{\theta',\boldsymbol y}^{m:i}(\xi) \big)
	-
	\tilde{F}_{\theta,\boldsymbol y}^{i+1:n}
	\big(\tilde{F}_{\theta',\boldsymbol y}^{m:i+1}(\xi) \big)
	\right)
	\nonumber\\
	&=
	\sum_{i=m}^{n-1}
	\left(
	\tilde{F}_{\theta,\boldsymbol y}^{i+1:n}
	\big(\tilde{F}_{\theta,y_{i}}
	\big(\tilde{F}_{\theta',\boldsymbol y}^{m:i}(\xi) \big)\big)
	-
	\tilde{F}_{\theta,\boldsymbol y}^{i+1:n}
	\big(\tilde{F}_{\theta',\boldsymbol y}^{m:i+1}(\xi) \big)
	\right).
\end{align}
It is also easy to show
\begin{align}\label{l1.2.3}
	&
	\tilde{G}_{\theta,\boldsymbol y}^{m:n}(\xi,\zeta)
	- \tilde{G}_{\theta',\boldsymbol y}^{m:n}(\xi,\zeta)
	\nonumber\\
	&
	\begin{aligned}
	=\!
	\sum_{i=m}^{n-1}\!
	\Big(
	&
	\tilde{G}_{\theta,\boldsymbol y}^{i:n}
	\big(\tilde{F}_{\theta',\boldsymbol y}^{m:i}(\xi),
	\tilde{G}_{\theta',\boldsymbol y}^{m:i}(\xi,\zeta) \big)
	\nonumber\\
	&
	-
	\tilde{G}_{\theta,\boldsymbol y}^{i+1:n}
	\big(\tilde{F}_{\theta',\boldsymbol y}^{m:i+1}(\xi),
	\tilde{G}_{\theta',\boldsymbol y}^{m:i+1}(\xi,\zeta) \big)
	\Big)
	\end{aligned}
	\nonumber\\
	&
	\begin{aligned}[b]
	=\!
	\sum_{i=m}^{n-1}\!
	\Big(
	&
	\tilde{G}_{\theta,\boldsymbol y}^{i+1:n}
	\big(\tilde{F}_{\theta,y_{i}}\big( \tilde{F}_{\theta',\boldsymbol y}^{m:i}(\xi) \big),
	\tilde{G}_{\theta,y_{i}}\big(\tilde{F}_{\theta',\boldsymbol y}^{m:i}(\xi),
	\tilde{G}_{\theta',\boldsymbol y}^{m:i}(\xi,\zeta) \big)\big)
	\\
	&
	-
	\tilde{G}_{\theta,\boldsymbol y}^{i+1:n}
	\big(\tilde{F}_{\theta',\boldsymbol y}^{m:i+1}(\xi),
	\tilde{G}_{\theta',\boldsymbol y}^{m:i+1}(\xi,\zeta) \big)
	\Big).
	\end{aligned}
\end{align}

Let $C_{9,Q}=4C_{7,Q}C_{8,Q}^{2}(1-\rho_{4,Q} )^{-1}$
($C_{7,Q}$ is specified in Lemma \ref{lemma1.1}).
Relying on Lemma \ref{lemma1.1} and (\ref{l1.2.3*}), (\ref{l1.2.1}), we deduce
\begin{align*}
	&
	\big\| \tilde{F}_{\theta,\boldsymbol y}^{m:n}(\xi)
	- \tilde{F}_{\theta',\boldsymbol y}^{m:n}(\xi) \big\|
	\nonumber\\
	&
	\leq 
	\sum_{i=m}^{n-1}
	\left\|
	\tilde{F}_{\theta,\boldsymbol y}^{i+1:n}
	\big(\tilde{F}_{\theta,y_{i}}\big(\tilde{F}_{\theta',\boldsymbol y}^{m:i}(\xi) \big)\big)
	-
	\tilde{F}_{\theta,\boldsymbol y}^{i+1:n}
	\big(\tilde{F}_{\theta',\boldsymbol y}^{m:i+1}(\xi) \big)
	\right\|
	\\
	&\leq 
	C_{8,Q} \sum_{i=m}^{n-1} \rho_{4,Q}^{n-i-1}
	\left\|
	\tilde{F}_{\theta,y_{i}}\big(\tilde{F}_{\theta',\boldsymbol y}^{m:i}(\xi) \big)
	-
	\tilde{F}_{\theta',y_{i}}\big(\tilde{F}_{\theta',\boldsymbol y}^{m:i}(\xi) \big)
	\right\|
	\\
	&\leq 
	C_{7,Q}C_{8,Q} \|\theta-\theta' \|
	\sum_{i=m}^{n-1} \rho_{4,Q}^{n-i-1}
	\\
	&\leq 
	C_{7,Q}C_{8,Q}(1-\rho_{4,Q} )^{-1} \|\theta-\theta'\|
	\leq 
	C_{9,Q} \|\theta-\theta'\|.
\end{align*}
Similarly, using (\ref{l1.2.5*}), (\ref{l1.2.3}),
we conclude
\begin{align*}
	&
	\big\| \tilde{G}_{\theta,\boldsymbol y}^{m:n}(\xi,\zeta)
	- \tilde{G}_{\theta',\boldsymbol y}^{m:n}(\xi,\zeta) \big\|
	\nonumber\\
	&
	\begin{aligned}
	\leq\! 
	\sum_{i=m}^{n-1}\!
	\Big\|
	&
	\tilde{G}_{\theta,\boldsymbol y}^{i+1:n}
	\big(\tilde{F}_{\theta,y_{i}}\big( \tilde{F}_{\theta',\boldsymbol y}^{m:i}(\xi) \big),
	\tilde{G}_{\theta,y_{i}}\big(\tilde{F}_{\theta',\boldsymbol y}^{m:i}(\xi),
	\tilde{G}_{\theta',\boldsymbol y}^{m:i}(\xi,\zeta) \big)\big)
	\\
	&
	-
	\tilde{G}_{\theta,\boldsymbol y}^{i+1:n}
	\big(\tilde{F}_{\theta',\boldsymbol y}^{m:i+1}(\xi),
	\tilde{G}_{\theta',\boldsymbol y}^{m:i+1}(\xi,\zeta) \big)
	\Big\|
	\end{aligned}
	\\
	&
	\begin{aligned}
	\leq& 
	\begin{aligned}[t]
	C_{8,Q} \sum_{i=m}^{n-1} \rho_{4,Q}^{n-i-1}
	&
	\Big\|
	\tilde{F}_{\theta,y_{i}}\big(\tilde{F}_{\theta',\boldsymbol y}^{m:i}(\xi) \big)
	-
	\tilde{F}_{\theta',y_{i}}\big(\tilde{F}_{\theta',\boldsymbol y}^{m:i}(\xi) \big)
	\Big\|
	\nonumber\\
	&\cdot 
	\Big(
	1	+ \big\| \tilde{G}_{\theta',\boldsymbol y}^{m:i+1}(\xi,\zeta) \big\|
	\Big)
	\end{aligned}
	\\
	&+
	\begin{aligned}[t]
	\!C_{8,Q} \!\sum_{i=m}^{n-i-1} \!\!\rho_{4,Q}^{n-i-1}
	\Big\|
	&
	\tilde{G}_{\theta,y_{i}}\big(\tilde{F}_{\theta',\boldsymbol y}^{m:i}(\xi),
	\tilde{G}_{\theta',\boldsymbol y}^{m:i}(\xi,\zeta) \big)
	\\
	&
	\!-\!
	\tilde{G}_{\theta',y_{i}}\big(\tilde{F}_{\theta',\boldsymbol y}^{m:i}(\xi),
	\tilde{G}_{\theta',\boldsymbol y}^{m:i}(\xi,\zeta) \big)
	\Big\|.
	\end{aligned}
	\end{aligned}
\end{align*}
Consequently, Lemma \ref{lemma1.1} and (\ref{l1.2.1*}) imply
\begin{align*}
	&
	\big\| \tilde{G}_{\theta,\boldsymbol y}^{m:n}(\xi,\zeta)
	- \tilde{G}_{\theta',\boldsymbol y}^{m:n}(\xi,\zeta) \big\|
	\nonumber\\
	&
	\begin{aligned}
	\leq& 
	C_{7,Q}C_{8,Q}\|\theta-\theta'\|
	\sum_{i=m}^{n-1} \rho_{4,Q}^{n-i-1}
	\left(
	1	+ \big\| \tilde{G}_{\theta',\boldsymbol y}^{m:i+1}(\xi,\zeta) \big\|
	\right)
	\\
	&+
	C_{7,Q}C_{8,Q}\|\theta-\theta'\|
	\sum_{i=m}^{n-1} \rho_{4,Q}^{n-i-1}
	\left(
	1	+ \big\| \tilde{G}_{\theta',\boldsymbol y}^{m:i}(\xi,\zeta) \big\|
	\right)
	\end{aligned}
	\\
	&\leq 
	4C_{7,Q}C_{8,Q}^{2}\|\theta-\theta'\|(1+\|\zeta\| ) \sum_{i=m}^{n-1} \rho_{4,Q}^{n-i-1}
	\\
	&\leq 
	4C_{7,Q}C_{8,Q}^{2}(1-\rho_{4,Q} )^{-1} \|\theta-\theta'\|(1+\|\zeta\| )
	\\
	&=
	C_{9,Q} \|\theta-\theta'\|(1+\|\zeta\| ).
\end{align*}
\end{proof}

\begin{proof}[\rm\bf Proof of Lemma \ref{lemma1.3}]
(i) See \cite[Theorem 3.1]{tadic&doucet2}.

(ii) and (iii)
Throughout these parts of the proof, 
$E_{x,y}(\cdot)$ and $E_{X_{0},Y_{0}}(\cdot)$ 
denote the conditional expectations 
$E(\cdot|X_{0}=x,Y_{0}=y)$ and $E(\cdot|X_{0},Y_{0} )$ (respectively). 
Due to \cite[Proposition 7.2]{tadic&doucet2},
there exist real numbers $\beta_{Q}\in(0,1)$, $\tilde{C}_{1,Q}\in[1,\infty)$
(independent of $N$ and depending only on
$p_{\theta}(x'|x)$, $q_{\theta}(y|x)$) such that
\begin{align}\label{l1.3.7}
	&
	\left\|
	E_{x,y}\left(
	\tilde{H}_{\theta,Y_{n} }\big(
	\tilde{F}_{\theta,\boldsymbol Y}^{0:n-1}(\xi),
	\tilde{G}_{\theta,\boldsymbol Y}^{0:n-1}(\xi,\zeta)
	\big)
	-
	\nabla l(\theta)
	\right)
	\right\|
	\nonumber\\
	&\leq
	\tilde{C}_{1,Q}\beta_{Q}^{n} (1 + \|\zeta\| )
\end{align}
for all $\theta\in Q$, $x\in{\cal X}$, $y\in{\cal Y}$, $\xi\in{\cal P}({\cal X})$,
$\zeta\in{\cal M}_{s}^{d}({\cal X})$, $n\geq 1$.

Throughout the proof of (ii), (iii), the following notation is used, too.
$\theta$, $\theta$, $\theta'$ are any elements of $Q$.
$x$, $y$ are any elements of ${\cal X}$, ${\cal Y}$ (respectively),
while $B$, $\xi$, $\zeta$ are any elements of ${\cal B}({\cal X})$, ${\cal P}({\cal X} )$,
${\cal M}_{s}^{d}({\cal X} )$ (respectively).
$\boldsymbol y = \{y_{n} \}_{n\geq 0}$ is any sequence in ${\cal Y}$.
$n$ is any positive integer.
$\tilde{\alpha}_{\theta,y}(dx|\xi)$, $\tilde{\beta}_{\theta,y}(dx|\xi,\zeta)$
are the measures defined by
\begin{align*}
	&
	\tilde{\alpha}_{\theta,y}(B|\xi)
	=
	\frac{\int_{B} q_{\theta}(y|x) \xi(dx) }{\int q_{\theta}(y|x) \xi(dx) },
	\\
	&
	\tilde{\beta}_{\theta,y}(B|\xi,\zeta)
	=
	\frac{\int_{B} q_{\theta}(y|x) \zeta(dx)
	+ \int_{B} \nabla_{\theta} q_{\theta}(y|x) \xi(dx) }
	{\int q_{\theta}(y|x) \xi(dx) }.
\end{align*}

It is straightforward to verify
\begin{align}
	&\label{l1.3.1'}
	\begin{aligned}[b]
	F_{\theta,\boldsymbol y}^{0:n}(B|\xi)
	=
	\begin{aligned}[t]
	\iint 
	&
	I_{B}(x') p_{\theta}(x'|x) \mu(dx')
	\\
	&\cdot 
	\tilde{F}_{\theta,\boldsymbol y}^{0:n-1}\big(dx|\tilde{\alpha}_{\theta,y_{0}}(\xi) \big),
	\end{aligned}
	\end{aligned}
	\\
	&\label{l1.3.1}
	\begin{aligned}[b]
	G_{\theta,\boldsymbol y}^{0:n}(B|\xi,\zeta)
	=&
	\begin{aligned}[t]
	\iint 
	&
	I_{B}(x') \nabla_{\theta} p_{\theta}(x'|x) \mu(dx')
	\\
	&\cdot 
	\tilde{F}_{\theta,\boldsymbol y}^{0:n-1}\big(dx|\tilde{\alpha}_{\theta,y_{0}}(\xi) \big)
	\end{aligned}
	\\
	&+
	\begin{aligned}[t]
	\iint 
	&
	I_{B}(x') p_{\theta}(x'|x) \mu(dx')
	\\
	&\cdot 
	\tilde{G}_{\theta,\boldsymbol y}^{0:n-1}
	\big(dx|\tilde{\alpha}_{\theta,y_{0}}(\xi),\tilde{\beta}_{\theta,y_{0}}(\xi,\zeta) \big).
	\end{aligned}
	\end{aligned}
\end{align}
(for a detailed derivation of (\ref{l1.3.1'}), (\ref{l1.3.1}), see \cite[Lemma SM2.1]{tadic&doucet4}).
Then, using (\ref{1.3*b}), (\ref{a2.9*.b}), we conclude
\begin{align*}
	&
	H_{\theta,y_{n} }\big(
	F_{\theta,\boldsymbol y}^{0:n}(\zeta),
	G_{\theta,\boldsymbol y}^{0:n}(\xi,\zeta)
	\big)
	\\
	&
	=
	\tilde{H}_{\theta,y_{n} }\big(
	\tilde{F}_{\theta,\boldsymbol y}^{0:n-1}
	\big(\tilde{\alpha}_{\theta,y_{0}}(\xi) \big),
	\tilde{G}_{\theta,\boldsymbol y}^{0:n-1}
	\big(\tilde{\alpha}_{\theta,y_{0}}(\xi),\tilde{\beta}_{\theta,y_{0}}(\xi,\zeta) \big)
	\big).
\end{align*}
Hence, we have
\begin{align}\label{l1.3.3}
	&
	E_{x,y}\left(
	H_{\theta,Y_{n} }\big(
	F_{\theta,\boldsymbol Y}^{0:n}(\zeta),
	G_{\theta,\boldsymbol Y}^{0:n}(\xi,\zeta)
	\big)
	\right)
	\nonumber\\
	&
	\begin{aligned}[b]
	=
	E_{x,y}\Big(
	&
	\tilde{H}_{\theta,Y_{n} }\big(
	\tilde{F}_{\theta,\boldsymbol Y}^{0:n-1}
	\big(\tilde{\alpha}_{\theta,y}(\xi) \big),
	\\
	&
	\tilde{G}_{\theta,\boldsymbol Y}^{0:n-1}
	\big(\tilde{\alpha}_{\theta,y}(\xi),\tilde{\beta}_{\theta,y}(\xi,\zeta) \big)
	\big)
	\Big).
	\end{aligned}
\end{align}

Let $\tilde{C}_{2,Q}=\varepsilon_{Q}^{-2}K_{1,Q}$,
$\tilde{C}_{3,Q}=5C_{7,Q}C_{8,Q}C_{9,Q}$,
while $C_{1,Q}=3C_{8,Q}\tilde{C}_{2,Q}(1+\|\mu\|)$
($\varepsilon_{Q}$, $K_{1,Q}$, $C_{7,Q}$, $C_{8,Q}$
are specified in Assumption \ref{a3}, \ref{a4} and Lemmas \ref{lemma1.1}, \ref{lemma1.2},
while $\mu(dx)$ is defined in Subsection \ref{ssection1.1}).
Owing to Assumptions \ref{a3}, \ref{a4}, we have
\begin{align}\label{l1.3.5}
	\big\| \tilde{\beta}_{\theta,y}(\xi,\zeta) \big\|
	\leq&
	\frac{\int q_{\theta}(y|x) |\zeta|(dx) + \int \|\nabla_{\theta} q_{\theta}(y|x) \| \xi(dx) }
	{\int q_{\theta}(y|x) \xi(dx) }
	\nonumber\\
	\leq&
	\frac{1}{\varepsilon_{Q}^{2} } \|\zeta\|
	+
	\frac{K_{1,Q} }{\varepsilon_{Q} }
	\leq
	\tilde{C}_{2,Q} (1 + \|\zeta\| ).
\end{align}
Consequently, Assumption \ref{a4}, Lemma \ref{lemma1.2} and (\ref{l1.3.1}) yield
\begin{align*}
	&
	\big\| G_{\theta,\boldsymbol y}^{0:n}(\xi,\zeta) \big\|
	\\
	&
	\begin{aligned}
	\leq& 
	\iint \| \nabla_{\theta} p_{\theta}(x'|x) \| \mu(dx')
	\tilde{F}_{\theta,\boldsymbol y}^{0:n-1}\big(dx|\tilde{\alpha}_{\theta,y_{0}}(\xi) \big)
	\\
	&+
	\iint p_{\theta}(x'|x) \mu(dx')
	\big| \tilde{G}_{\theta,\boldsymbol y}^{0:n-1} \big|
	\big(dx|\tilde{\alpha}_{\theta,y_{0}}(\xi),\tilde{\beta}_{\theta,y_{0}}(\xi,\zeta) \big)
	\end{aligned}
	\\
	&\leq 
	K_{1,Q} \|\mu \|
	+
	\big\| \tilde{G}_{\theta,\boldsymbol y}^{0:n-1}
	\big(\tilde{\alpha}_{\theta,y_{0}}(\xi),\tilde{\beta}_{\theta,y_{0}}(\xi,\zeta) \big) \big\|
	\\
	&\leq 
	K_{1,Q} \|\mu \|
	+
	C_{8,Q} \left( 1 + \big\| \tilde{\beta}_{\theta,y_{0}}(\xi,\zeta) \big\| \right)
	\\
	&\leq 
	K_{1,Q}\|\mu\| + 2C_{8,Q}\tilde{C}_{2,Q}(1 + \|\zeta\| )
	\leq 
	C_{1,Q} (1 + \|\zeta \| ).
\end{align*}
Hence, (iii) holds.

Combining (\ref{l1.3.7}), (\ref{l1.3.3}), (\ref{l1.3.5}), we get
\begin{align*}
	&
	\left\|
	E_{x,y}\left(
	H_{\theta,Y_{n} }\big(
	F_{\theta,\boldsymbol Y}^{0:n}(\xi),
	G_{\theta,\boldsymbol Y}^{0:n}(\xi,\zeta)
	\big)
	-
	\nabla l(\theta)
	\right)
	\right\|
	\nonumber\\
	&\leq 
	\tilde{C}_{1,Q}\beta_{Q}^{n}
	\left(1 + \big\|\tilde{\beta}_{\theta,y}(\xi,\zeta) \big\| \right)
	\leq 
	2\tilde{C}_{1,Q}\tilde{C}_{2,Q}\beta_{Q}^{n}
	(1 + \|\zeta\| ).
\end{align*}
Therefore, we have
\begin{align*}
	&
	\left\|
	E\left(
	H_{\theta,Y_{n} }\big(
	F_{\theta,\boldsymbol Y}^{0:n}(\xi),
	G_{\theta,\boldsymbol Y}^{0:n}(\xi,\zeta)
	\big)
	\right)
	-
	\nabla l(\theta)
	\right\|
	\nonumber\\
	&\leq 
	E\left(
	\left\|
	E_{X_{0},Y_{0}}\left(
	H_{\theta,Y_{n} }\big(
	F_{\theta,\boldsymbol Y}^{0:n}(\xi),
	G_{\theta,\boldsymbol Y}^{0:n}(\xi,\zeta)
	\big)
	-
	\nabla l(\theta)
	\right)
	\right\|
	\right)
	\\
	&\leq 
	2\tilde{C}_{1,Q}\tilde{C}_{2,Q}\beta_{Q}^{n}
	(1 + \|\zeta\| ).
\end{align*}
Thus, (\ref{l1.3.1*}) holds.

Owing to Lemmas \ref{lemma1.1}, \ref{lemma1.2}, we have
\begin{align*}
	&
	\begin{aligned}[t]
	\Big\|
	&
	\tilde{H}_{\theta,Y_{n} }\big(
	\tilde{F}_{\theta,\boldsymbol Y}^{0:n-1}(\xi),
	\tilde{G}_{\theta,\boldsymbol Y}^{0:n-1}(\xi,\zeta)
	\big)
	\\
	&
	-
	\tilde{H}_{\theta',Y_{n} }\big(
	\tilde{F}_{\theta',\boldsymbol Y}^{0:n-1}(\xi),
	\tilde{G}_{\theta',\boldsymbol Y}^{0:n-1}(\xi,\zeta)
	\big)
	\Big\|
	\end{aligned}
	\\
	&
	\begin{aligned}[t]
	\leq &
	\begin{aligned}[t]
	&
	C_{7,Q}\Big(
	\|\theta-\theta'\|
	+
	\big\| \tilde{F}_{\theta,\boldsymbol Y}^{0:n-1}(\xi)
	- \tilde{F}_{\theta',\boldsymbol Y}^{0:n-1}(\xi) \big\|
	\Big)
	\\
	&\cdot 
	\left( 1 + \big\| \tilde{G}_{\theta,\boldsymbol Y}^{0:n-1}(\xi,\zeta) \big\| \right)
	\end{aligned}
	\\
	&+
	C_{7,Q}
	\big\| \tilde{G}_{\theta,\boldsymbol Y}^{0:n-1}(\xi,\zeta)
	- \tilde{G}_{\theta',\boldsymbol Y}^{0:n-1}(\xi,\zeta) \big\|
	\end{aligned}
	\\
	&\leq
	5C_{7,Q}C_{8,Q}C_{9,Q} \|\theta-\theta'\| (1+\|\zeta\| )
	\\
	&
	\leq
	\tilde{C}_{3,Q}\|\theta-\theta'\| (1+\|\zeta\| ).
\end{align*}
Moreover, due to (\ref{l1.3.7}), we have
\begin{align*}
	&
	\left\|
	E\left(
	\tilde{H}_{\theta,Y_{n} }\big(
	\tilde{F}_{\theta,\boldsymbol Y}^{0:n-1}(\xi),
	\tilde{G}_{\theta,\boldsymbol Y}^{0:n-1}(\xi,\zeta)
	\big)
	\right)
	-
	\nabla l(\theta)
	\right\|
	\\
	&
	\leq\!
	E\!\left(
	\left\|
	E_{X_{0},Y_{0}}\!\left(
	\tilde{H}_{\theta,Y_{n} }\big(
	\tilde{F}_{\theta,\boldsymbol Y}^{0:n-1}(\xi),
	\tilde{G}_{\theta,\boldsymbol Y}^{0:n-1}(\xi,\zeta)
	\big)
	\!-\!
	\nabla l(\theta)
	\right)
	\right\|
	\right)
	\\
	&
	\leq
	\tilde{C}_{1,Q}\beta_{Q}^{n}	(1 + \|\zeta\| ).
\end{align*}
Therefore, we get
\begin{align*}
	&
	\|\nabla l(\theta) - \nabla l(\theta') \|
	\\
	&
	\begin{aligned}
	\leq 
	&
	\begin{aligned}[t]
	E\Big(
	\Big\|
	&
	\tilde{H}_{\theta,Y_{n} }\big(
	\tilde{F}_{\theta,\boldsymbol Y}^{0:n-1}(\xi),
	\tilde{G}_{\theta,\boldsymbol Y}^{0:n-1}(\xi,\zeta)
	\big)
	\\
	&
	-
	\tilde{H}_{\theta',Y_{n} }\big(
	\tilde{F}_{\theta',\boldsymbol Y}^{0:n-1}(\xi),
	\tilde{G}_{\theta',\boldsymbol Y}^{0:n-1}(\xi,\zeta)
	\big)
	\Big\|
	\Big)
	\end{aligned}
	\\
	&+
	\left\|
	E\left(
	\tilde{H}_{\theta,Y_{n} }\big(
	\tilde{F}_{\theta,\boldsymbol Y}^{0:n-1}(\xi),
	\tilde{G}_{\theta,\boldsymbol Y}^{0:n-1}(\xi,\zeta)
	\big)
	\right)
	-
	\nabla l(\theta)
	\right\|
	\\
	&+
	\left\|
	E\left(
	\tilde{H}_{\theta',Y_{n} }\big(
	\tilde{F}_{\theta',\boldsymbol Y}^{0:n-1}(\xi),
	\tilde{G}_{\theta',\boldsymbol Y}^{0:n-1}(\xi,\zeta)
	\big)
	\right)
	-
	\nabla l(\theta')
	\right\|
	\end{aligned}
	\\
	&\leq 
	\tilde{C}_{3,Q} \|\theta-\theta'\| (1+\|\zeta\| )
	+
	2\tilde{C}_{1,Q}\beta_{Q}^{n} (1 + \|\zeta\| ).
\end{align*}
Letting $n\rightarrow\infty$, we deduce
\begin{align*}
	\|\nabla l(\theta) - \nabla l(\theta') \|
	\leq
	\tilde{C}_{3,Q} \|\theta-\theta'\|.
\end{align*}
Since $Q$ is any compact set in $\Theta$,
we deduce that (ii) holds.
\end{proof}

\end{document}